\newcommand{\mi}{\mathrm{i}}
\newcommand{\bx}{\boldsymbol{x}}
\DeclareMathOperator{\thh}{th}
\newcommand{\B}{\mathbf{B}}
\renewcommand{\H}{\mathbf{H}}
\newcommand{\E}{\mathbf{E}}
\newcommand{\F}{\mathbf{F}}
\newcommand{\C}{\mathbf{C}}
\renewcommand{\c}{c}
\newcommand{\D}{\mathbf{D}}
\newcommand{\X}{\mathbf{X}}
\newcommand{\M}{\mathbf{M}}
\newcommand{\W}{\mathbf{W}}
\newcommand{\N}{\mathbb{N}}
\newcommand{\R}{\mathbb{R}}
\newcommand{\Z}{\mathbb{Z}}
\newcommand{\bZ}{\mathbf{Z}}
\newcommand{\K}{\mathbf{K}}
\newcommand{\G}{\mathbf{G}}
\newcommand{\g}{g}
\newcommand\coolover[2]{\mathrlap{\smash{\overbrace{\phantom{%
    \begin{matrix} #2 \end{matrix}}}^{\mbox{$#1$}}}}#2}
\newcommand\coolunder[2]{\mathrlap{\smash{\underbrace{\phantom{%
    \begin{matrix} #2 \end{matrix}}}_{\mbox{$#1$}}}}#2}
\newcommand\coolleftbrace[2]{%
#1\left\{\vphantom{\begin{matrix} #2 \end{matrix}}\right.}
\newcommand\coolrightbrace[2]{%
\left.\vphantom{\begin{matrix} #1 \end{matrix}}\right\}#2}
\newcommand{\tikzmark}[1]{\tikz[overlay,remember picture] \node (#1) {};}
\newcommand{\DrawBox}[4][]{%
    \tikz[overlay,remember picture]{%
        \coordinate (TopLeft)     at ($(#2)+(-0.2em,0.9em)$);
        \coordinate (BottomRight) at ($(#3)+(0.2em,-0.3em)$);
        \path (TopLeft); \pgfgetlastxy{\XCoord}{\IgnoreCoord};
        \path (BottomRight); \pgfgetlastxy{\IgnoreCoord}{\YCoord};
        \coordinate (LabelPoint) at ($(\XCoord,\YCoord)!0.5!(BottomRight)$);
        \draw [red,#1] (TopLeft) rectangle (BottomRight);
        \node [below, #1, fill=none, fill opacity=1] at (LabelPoint) {#4};
    }
}
\newcommand{\circleasterisk}{%
  \begin{tikzpicture}[baseline=-0.5ex]
    \draw (0,0) circle (0.5em);
    \node at (0,0) {$*$};
  \end{tikzpicture}%
}
\newcommand{\squareasterisk}{%
  \begin{tikzpicture}[baseline=-0.5ex]
    \draw (-0.1,-0.1) rectangle (0.6em, 0.6em);
    \node at (0.06,0.06) {$*$};
  \end{tikzpicture}%
}
\numberwithin{equation}{section}
\newtheorem{theorem}{Theorem}[section]
\newtheorem{assumption}[theorem]{Assumption}
\newtheorem{lemma}[theorem]{Lemma}
\newtheorem{proposition}[theorem]{Proposition}
\newtheorem{corollary}[theorem]{Corollary}
\newtheorem{remark}[theorem]{Remark}
\newtheorem{example}[theorem]{Example}
\title{Stability of conforming space--time \\ isogeometric methods for the wave equation}
\author{\href{https://orcid.org/0000-0002-2577-1421}{\includegraphics[scale=0.06]{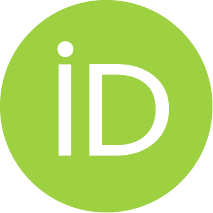}\hspace{1mm}Matteo ~Ferrari} \\
	Faculty of Mathematics \\
    University of Vienna\\
    1090 Vienna, Austria \\
	\texttt{matteo.ferrari@univie.ac.at} \\
\And
\href{https://orcid.org/0009-0005-7783-4438}{\includegraphics[scale=0.06]{orcid.pdf}\hspace{1mm} Sara ~Fraschini} \\
	Faculty of Mathematics \\
    University of Vienna \\
    1090 Vienna, Austria \\
	\texttt{sara.fraschini@univie.ac.at} \\
}
\begin{document}
\maketitle

\begin{abstract}
We consider a family of conforming space--time finite element discretizations for the wave equation based on splines of maximal regularity in time. Traditional techniques may require a CFL condition to guarantee stability. Recent works by O. Steinbach and M. Zank (2018), and S. Fraschini, G. Loli, A. Moiola, and G. Sangalli (2023), have introduced unconditionally stable schemes by adding non-consistent penalty terms to the underlying bilinear form. Stability and error analysis have been carried out for lowest order discrete spaces. While higher order methods have shown promising properties through numerical testing, their rigorous analysis was still missing. In this paper, we address this stability analysis by studying the properties of the condition number of a family of matrices associated with the time discretization. For each spline order, we derive explicit estimates of both the CFL condition required in the unstabilized case and the penalty term that minimises the consistency error in the stabilized case. Numerical tests confirm the sharpness of our results.
\end{abstract}

\section{Introduction}
The foundations of space--time Finite Element Methods (FEMs) for wave propagation problems have been laid in the seminal paper \cite{HH1988}. Unlike standard time-stepping techniques, space--time methods involve the simultaneous discretization of space and time variables, yielding numerical solutions available at all time instants. The popularity of these space--time schemes has been promoted in the last decades by recent advancements in computer technology, along with the remarkably properties of space--time discretizations. These include effective handling of moving boundaries, high order accuracy both in space and in time, multilevel preconditioning, local mesh refinement and adaptivity, and space--time parallelization.

The majority of space--time methods for the wave equation belong to the discontinuous Galerkin family (see, e.g. \cite{ImbertMoiolaStocker2023,MoiolaPerugia2018,MR2005} and references therein). In contrast, there has been much less work dedicated to space--time conforming Galerkin methods, for which only a few works are available. Among these, we mention the recent works \cite{BM2023, FLMS2023, LSZ2023, SZ2019, SZ2020}. It has been shown in \cite{F2021, SZ2019} that stability of conforming space--time methods for the second order acoustic wave equation may be subject to a CFL condition. This limitation has been overcome in \cite{SZ2019, Z2021}, where an unconditionally stable space--time piecewise continuous FEM has been designed by stabilizing the bilinear form coming from integration by parts. In \cite{F2021, FLMS2023}, generalizing the idea from \cite{SZ2019}, a high order unconditionally stable space--time isogeometric method has been devised.

The isogeometric method, also known as Isogeometric Analysis (IgA), has been introduced in~\cite{HCB2005} as an advancement from classical FEMs, aiming to streamline the integration of computer-aided design with numerical simulations. Essentially, the isoparametric concept is invoked, since IgA employs spline functions, or their generalizations, to both parametrize the computational domain and approximate the differential equation of interest. Leveraging the smoothness of splines leads to increased accuracy per degree of freedom when compared to classical $C^0$ piecewise polynomial discretizations (see, e.g. \cite{BS2019,EBBH2009}), and superior spectral approximation properties \cite{HRS2008}. 

In this paper, we consider the space--time isogeometric method introduced in \cite{FLMS2023}. We provide the first theoretical justification of its unconditional stability. As in \cite{SZ2019}, we carry out our analysis focusing on an ordinary differential equation strongly related to the time part of the acoustic wave equation. Our theoretical results are derived by exploiting the algebraic structure of the matrices involved, and are aimed at studying the properties of their condition numbers. The analysis we perform combines two main tools: properties of the so-called symbols \cite{GaroniSpeleersEkstromRealiSerraCapizzanoHughes2019} of the matrices associated with IgA discretizations, and the behaviour of the condition number of families of Toeplitz band matrices characterized in \cite{AmodioBrugnano1996}. All at once we are able to obtain, for each spline degree, sharp estimates on the CFL condition required in the unstabilized case, and to derive which penalty term guarantees unconditional stability while minimising the consistency error. Moreover, we apply our analysis technique to different choices for the penalty term, and show that, among those, the one proposed in \cite{FLMS2023} is the best one.

The paper is structured as follows. In Section \ref{sec:2}, we revisit the framework considered in \cite{F2021,FLMS2023,SZ2019}, detailing the variational setting and the proposed stabilized bilinear forms. In Section \ref{sec:3}, we provide an overview of splines with maximal regularity and of the properties of the associated Galerkin matrices. In Section \ref{sec:4}, we recall and extend results on the conditioning behaviour of families of Toeplitz band matrices, with emphasis on those exhibiting particular symmetries in their coefficients. In Section \ref{sec:5}, we present the proof of our main result on the CFL condition and the optimal penalty term. In Section \ref{sec:6}, we consider and analyze other possible stabilizations terms. Finally, in Section \ref{sec:7}, we present numerical tests that demonstrate that our estimates are sharp.

\section{Model problem and stabilized variational formulations} \label{sec:2}

We consider the linear acoustic wave equation defined on the space--time cylinder $Q = \Omega \times (0, T )$,
\begin{equation} \label{eq:21}
	\begin{cases}
		\partial_{t}^2 U(\bx,t) - \Delta_{\bx} U(\bx,t) = F(\bx,t), & (\bx,t) \in Q, 
		\\ U(\bx, t) = 0, & (\bx, t) \in \partial\Omega \times (0, T),
		\\ U(\bx, 0) = 0, \quad \partial_t U(\bx, t)_{|_{t=0}} = 0, & \bx \in \Omega,
	\end{cases}
\end{equation}
where $\Omega \subset \R^d$ ($d \in \N$) is a bounded Lipschitz domain, $T > 0$ is a finite time, and $F \in L^2(Q)$ is a given source term. 

To recall the space--time variational formulations considered in~\cite{FLMS2023, SZ2019},  we introduce the Hilbert spaces
\begin{equation*}
	\begin{aligned}
		H^{1,1}_{0;0,\bullet}(Q)&=L^2\big(0,T;H^1_{0}(\Omega)\big)\cap H^1_{0,\bullet}\big(0,T;L^2(\Omega)\big), \quad
		H^{1,1}_{0;\bullet,0}(Q)&=L^2\big(0,T;H^1_{0}(\Omega)\big)\cap H^1_{\bullet,0}\big(0,T;L^2(\Omega)\big),
	\end{aligned}
\end{equation*}
where we denote $H^1_{0}(\Omega) = \left\{w \in H^1(\Omega) : \ w_{|_{\partial\Omega}}=0 \right\}$, and the Sobolev-Bochner spaces
\begin{align*}	 
	H^1_{0,\bullet}\big(0,T;L^2(\Omega)\big) & =\{W \in H^1\big(0,T;L^2(\Omega)\big): \ W(\cdot,0)=0\}, \\
	H^1_{\bullet,0}\big(0,T;L^2(\Omega)\big) & = \{W \in H^1\big(0,T;L^2(\Omega)\big): \ W(\cdot,T)=0\}.
\end{align*}
After integration by parts, the space--time variational formulation for~\eqref{eq:21} reads as: find $U \in H^{1,1}_{0;0,\bullet}(Q)$ such that
\begin{equation} \label{eq:22}
	-(\partial_t U, \partial_t V)_{L^2(Q)} + (\nabla_{\bx} U, \nabla_{\bx} V)_{L^2(Q)}  = (F, V)_{L^2(Q)}, \, \text{ for all } V \in H^{1,1}_{0;\bullet,0}(Q),
\end{equation}
where $(\cdot,\cdot )_{L^2(Q)}$ denotes the inner product in $L^2(Q)$. Note that the initial condition $U(\bx,0) = 0$ and the homogeneous Dirichlet condition are considered in a strong sense, while $\partial_t U(\bx,t)_{|_{t=0}}\hspace{-0.05cm}= 0$ is incorporated in the variational formulation.

Exploiting the Fourier expansion of the trial and tests functions, it has been shown (see, e.g. \cite{Z2020}) that there exists a unique solution $U \in H^{1,1}_{0;0,\bullet}(Q)$ of the variational formulation \eqref{eq:22}, stable with respect to the source term $F$. In greater details, let $\{\psi_\ell\}_\ell$ be an orthogonal basis in $H^1_{0,\bullet}(0,T)= \{w \in H^1(0,T): \ w(0)=0\}$ composed by the eigenfunctions of the second time derivative with zero initial condition, and let $\{\lambda_j\}_j$ be the eigenfunctions of the space Laplacian with Dirichlet boundary conditions. Any $U \in H^{1,1}_{0;0,\bullet}(Q)$ admits the representation
\begin{equation*}
    U(\bx,t) = \sum_j \sum_\ell U_{j,\ell}\psi_\ell(t)\lambda_j(\bx) = \sum_j u_j(t)\lambda_j(\bx), \quad u_j(t)=\sum_\ell U_{j,\ell}\psi_\ell(t),
\end{equation*}
with suitable coefficients $\{U_{j,\ell}\}_{j,\ell} \subset \R$. Choosing in \eqref{eq:22} a test function $v(t)\lambda_j(\bx)$ with $v \in H^1_{\bullet,0}(0,T) = \{w \in H^1(0,T): \ w(T)=0\}$, it follows that the variational problem \eqref{eq:22} is equivalent to finding, for each $j$, the coefficient functions $u_j \in H^1_{0,\bullet}(0,T)$ such that
\begin{equation*}
    -(\partial_t u_j, \partial_t v)_{L^2(0,T)} + \mu_j ( u_j, v)_{L^2(0,T)} = ( f_j, v) _{L^2(0,T)}, \quad \text{ for all } v \in H^1_{\bullet,0}(0,T),
\end{equation*}
where $\{\mu_j\}_j$ is the non-decreasing, positive and divergent sequence of the eigenvalues of the space Dirichlet Laplacian, and $f_j(t) = (F(\cdot,t), \lambda_j)_{L^2(\Omega)}$. 

Our main focus lies in providing a finite element discretization for the initial value problem: find $u \in H_{0,\bullet}^1(0,T)$ such that
\begin{equation} \label{eq:2.3}
	a_\mu(u,v) = (f,v)_{L^2(0,T)} \quad \text{for all~} \, v \in H^1_{\bullet,0}(0,T),
\end{equation}
with $f \in L^2(0,T)$, and the bilinear form $a_\mu : H^1_{0,\bullet}(0,T) \times H^1_{\bullet,0}(0,T) \to \R$  given by
\begin{equation} \label{eq:2.4}
	a_\mu(u,v) =-(\partial_tu, \partial_tv)_{L^2(0,T)}+ \mu (u,v)_{L^2(0,T)}, \quad \text{for~} \mu > 0.
\end{equation}
If this method is stable independently of the mesh parameter that characterizes the finite element subspace and $\mu$, then we expect unconditional stability for the associated space--time discretization of the wave problem \eqref{eq:22}.

In \cite[Theorem 4.1]{SZ2020} it has been shown unique solvability of \eqref{eq:2.3} by a compact perturbation argument. However, this result is not sufficient to guarantee unconditional stability (with respect to $\mu$) for each numerical discretization of \eqref{eq:2.3}. It has been observed in \cite[Remark 4.9]{SZ2020} that a piecewise-linear continuous Galerkin-Petrov FEM discretization of \eqref{eq:2.3}, with a uniform mesh size $h$, is stable if and only if $h^2 \mu < 12$. This mesh condition turns out to be a CFL of the type $h_t < C_\Omega h_x$ in the associated space--time variational formulation for the wave equation, with $C_\Omega > 0$ depending on the domain $\Omega$, and $h_t$ and $h_x$ the temporal and spatial mesh parameters, respectively. To overcome this mesh condition, the bilinear form \eqref{eq:2.4} is stabilized in \cite{SZ2019}. Specifically, an unconditionally stable Galerkin-Petrov discretization is proposed by perturbing the bilinear form \eqref{eq:2.4} with a suitable penalty term \cite[Corollary 17.1]{SZ2019}, i.e., considering
\begin{equation*}
	a_{\mu,h}(u_h,v_h) =-(\partial_t u_h, \partial_t v_h)_{L^2(0,T)}+ \mu (u_h,v_h)_{L^2(0,T)} -\frac{\mu h^2}{12} ( \partial_t u_h, \partial_t v_h )_{L^2(0,T)}.
\end{equation*}
For all $h$ and $\mu$, the perturbed bilinear form $a_{\mu,h}$ satisfies a discrete inf–sup condition \cite[Lemma 17.6]{SZ2019}, and ensures optimal error estimates in $H^1$ and $L^2$ norms. An extension of this stabilization to high order Galerkin-Petrov $C^0$ finite elements has been proposed and numerically verified in \cite{Z2021}.

In \cite{F2021,FLMS2023}, in the same spirit, a stabilized numerical scheme for isogeometric discretizations with maximal regularity splines has been proposed adding a suitable non-consistent penalty term. For a spline space of degree $p$ with maximal regularity $C^{p-1}$, the proposed stabilized bilinear form reads as
\begin{equation} \label{eq:2.5}
\begin{aligned}
    a^p_{\mu,h}(u_h,v_h) = - (\partial_t u_h, \partial_t v_h)_{L^2(0,T)} & + \mu (u_h,v_h)_{L^2(0,T)} + \mu \delta h^{2p} (\partial_t^p u_h, \partial_t^p v_h)_{L^2(0,T)},
    \end{aligned}
\end{equation} 
for a suitable parameter $\delta \le 0$. The stabilized bilinear form \eqref{eq:2.5} is then employed in the space--time finite element discretization for the wave equation \eqref{eq:21} defining
\begin{equation} \label{eq:2.6}
\begin{aligned}
    a^p_h(U_h,V_h) =- (\partial_t U_h, \partial_t V_h)_{L^2(Q)} & + (\nabla_{\bx} U_h, \nabla_{\bx} V_h)_{L^2(Q)} + \delta h_t^{2p} (\nabla_{\bx} \partial_t^p U_h, \nabla_{\bx} \partial_t^p V_h)_{L^2(Q)}.
    \end{aligned}
\end{equation} 
Even if the proposed stabilized isogeometric formulation of~\cite{F2021,FLMS2023,FMS2022} is supported by a wide range of numerical experiments highlighting its stability and approximation properties, rigorous theoretical results are still missing. In the next sections, we address this theoretical analysis. Precisely, in Theorem \ref{th:510} below, we show that there exist two computable quantities $\delta_p <0$ and $\rho_p >0$ such that the Galerkin method associated with the bilinear form \eqref{eq:2.5} is stable, in a sense that will be specified later,
\begin{itemize}
	\item for all $h$ and $\mu$, if and only if $\delta \le \delta_p$,
	\item if and only if $\mu h^2 \le \rho_p$, when $\delta=0$.  
\end{itemize}

\section{Maximal regularity splines and associated matrices} \label{sec:3}
In this section, we describe the maximal regularity spline spaces and leverage certain properties of the matrices representing the bilinear form \eqref{eq:2.5}.

For $N \in \N$, we introduce the uniform mesh parameter $h = T/N$ and the break points $t_\ell = \ell h$, for $\ell = 0,\ldots,N$. For a positive integer $p \ge 1$, let $\{\xi_j\}_{j=0}^{2p+N} \subset [0,T]$ be an open knot vector. According to the Cox-De Boor recursion formulas, spline basis functions are piecewise polynomials defined recursively as 
\begin{align} \label{eq:3.1}
	\varphi^p_j(t) =
	\begin{cases}
        		\dfrac{t-\xi_{j}}{\xi_{j+p}-\xi_{j}}\varphi_j^{p-1}(t)   +\dfrac{\xi_{j+p+1}-t}{\xi_{j+p+1}-\xi_{j+1}}\varphi_{j+1}^{p-1}(t) & { \text{if }} t \in [\xi_j,\xi_{j+p+1}), 
        		\\ 0  & \text{otherwise},
	\end{cases}
\end{align}
with $j = 0,\ldots,N+p-1$, beginning with $\varphi^0_j(t) = 1$, if $t \in [\xi_{j},\xi_{j+1})$, and $\varphi^0_j(t)=0$ otherwise. The knot vector that guarantees maximal $C^{p-1}$ regularity is built as
\begin{equation*}
\begin{aligned}
	\xi_0 = \xi_1 = \ldots = \xi_p = t_0=0, \quad \xi_{p+1}=t_1, \quad \quad \ldots \quad \quad \xi_{p+N-1}=t_{N-1}, \quad \xi_{p+N} = \ldots = \xi_{2p+N} = t_{N} = T.
\end{aligned}
\end{equation*}
The space of splines generated by $\{\varphi_j^p\}_{j=0}^{N+p-1}$ is denoted by $S_h^p(0,T)$, while the trial and test spaces for the discrete variational formulation \eqref{eq:2.3} are, respectively,
\begin{equation*}
    S^p_{h,0,\bullet}(0,T) = S_h^p(0,T) \cap H_{0,\bullet}^1(0,T) \quad\quad \text{and} \quad\quad S^p_{h,\bullet,0}(0,T) = S_h^p(0,T) \cap H_{\bullet,0}^1(0,T).
\end{equation*}
From the $p^{\thh}$ basis function to the $(N-1)^{\thh}$, each spline maintains the same shape but is supported in different intervals. We say that $\varphi_j^p$ is central if it satisfies $p \le j \le N-1$. The support of the central spline $\varphi_j^p$ is $[t_{j-p},t_{j+1}]$.
\begin{figure}[h!]
    \centering
    \begin{subfigure}[b]{0.3\textwidth}
    \centering
         \includegraphics[width=\textwidth]{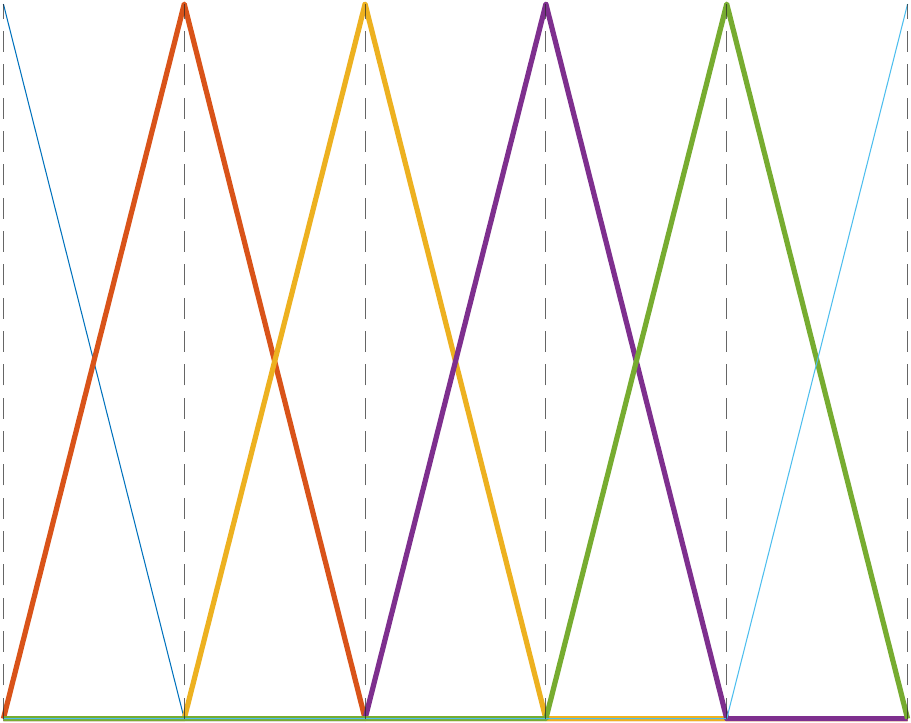}
         \caption{$p=1$}
     \end{subfigure}
     \hfill
    \begin{subfigure}[b]{0.3\textwidth}
    \centering
         \includegraphics[width=\textwidth]{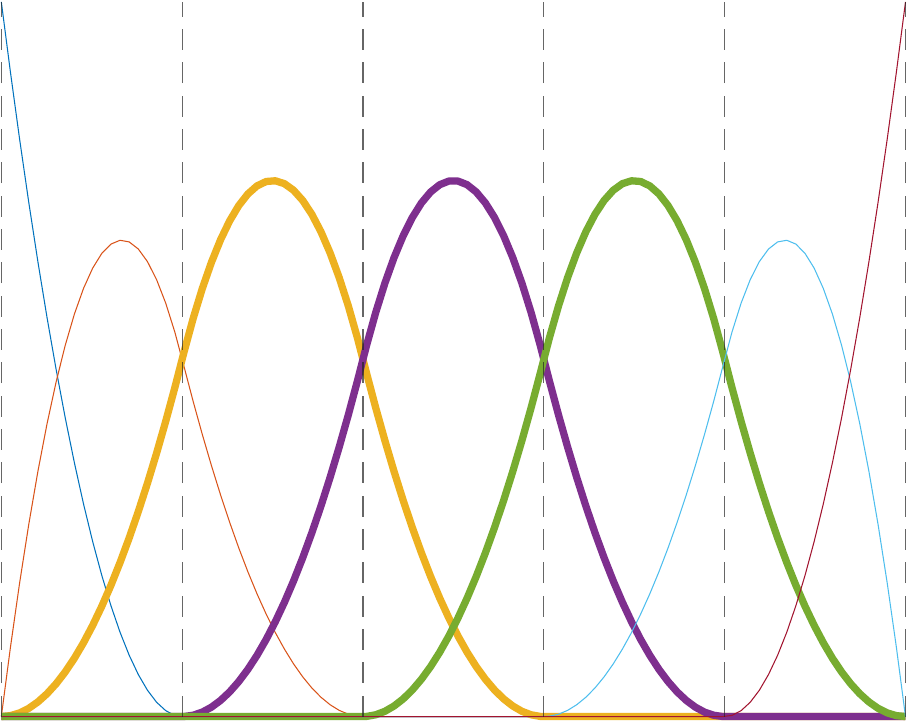}
         \caption{$p=2$}
     \end{subfigure}
     \hfill
     \begin{subfigure}[b]{0.3\textwidth}
    \centering
         \includegraphics[width=\textwidth]{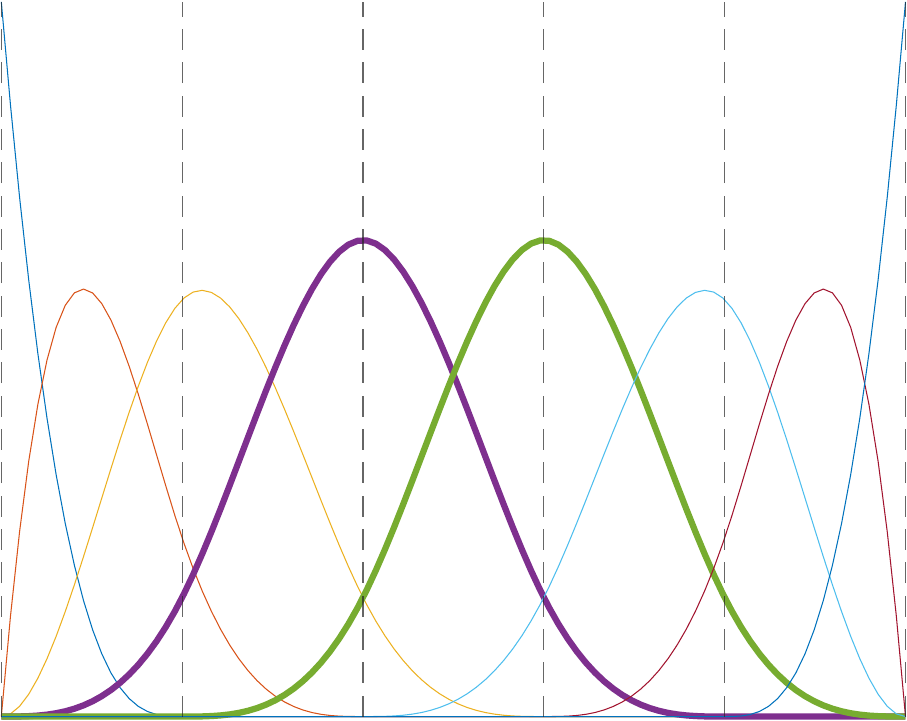}
         \caption{$p=3$}
     \end{subfigure}
\caption{The spline basis of $S_{h}^p(0,1)$ with $N = 5$. The break points are indicated by vertical dashed lines, and central splines are highlighted with thicker lines.}
\end{figure}

We now examine the matrix representation of the stabilized bilinear form $a_{\mu,h}^p$ \eqref{eq:2.5} with respect to the discrete spaces $S^p_{h,0,\bullet}(0,T) \times S^p_{h,\bullet,0}(0,T)$. Let us define the three matrices
\begin{equation} \label{eq:3.2}
\begin{aligned}
	\M^p_{h}[\ell,j] = \int_0^T & \varphi_j^p(t)  \varphi^p_{\ell-1}(t) \dd t, \quad \B^p_{h}[\ell,j] = \int_0^T \partial_t \varphi_j^p(t) \partial_t \varphi^p_{\ell-1}(t) \dd t, 
 \\ & \D^p_{h}[\ell,j] = \int_0^T \partial_t^p \varphi_j^p(t) \partial_t^p \varphi^p_{\ell-1}(t) \dd t
\end{aligned}
\end{equation}
with $\ell, j = 1,\ldots$ $n$, where we denote by $n$ the size of these three matrices, precisely
\begin{equation} \label{eq:3.3}
    n = N+p-1.
\end{equation}
\begin{example}
Here we explicitly write the entries of the matrices $\M_{h}^2, \B_{h}^2, \D_{h}^2 \in \R^{(N+1)\times (N+1)}$ to show their structure. With the exception of $5$ entries at the top-left and $5$ at the bottom-right corners, they are Toeplitz band matrices:
\begin{align*}
\hspace{-1.3cm}
\M_{h}^2 & = \frac{h}{120} 
\setcounter{MaxMatrixCols}{20}
    \begin{pmatrix} 
            14 & 2  \\
            40 & 25 & 1 \\
            25 & 66 & 26 & 1 \\
            1 & 26 & 66 & 26 & 1 \\
            & \ddots & \ddots & \ddots & \ddots & \ddots \\
            & &  1 & 26 & 66 & 26 & 1 \\
            & & & 1 & 26 & 66 & 25 & 2 \\
            & & & & 1 & 25 & 40 & 14 \\
    \end{pmatrix}_{(N+1) \times (N+1)}\hspace{-1.95cm},
    \\
\B_{h}^2 & = \frac{1}{6h} 
\setcounter{MaxMatrixCols}{20}
    \begin{pmatrix} 
            -6 & -2  \\ 
            8 & -1 & -1 \\
            -1 & 6 & -2 & -1 \\
            -1 & -2 & 6 & -2 & -1 \\
            & \ddots & \ddots & \ddots & \ddots & \ddots \\
            & & -1 & -2 & 6 & -2 & -1 \\
            & & & -1 & -2 & 6 & -1 & -2 \\
            & & & & -1 & -1 & 8 & -6 \\
    \end{pmatrix}_{(N+1) \times (N+1)} \hspace{-1.95cm},
\end{align*}
and 
\begin{equation*}
\D_{h}^2 = 
\setcounter{MaxMatrixCols}{20}
    \frac{1}{h^3} \begin{pmatrix} 
            -6 & 2  \\
            10 & -5 & 1 \\
            -5 & 6 & -4 & 1 \\
            1 & -4 & 6 & -4 & 1 \\
            & \ddots & \ddots & \ddots & \ddots & \ddots \\
            & & 1 & -4 & 6 &-4 & 1 \\
            & & & 1 & -4 & 6 & -5 & 2 \\
            & & & & 1 & -5 & 10 & -6 \\
    \end{pmatrix}_{(N+1) \times (N+1)}\hspace{-1.8cm}.
\end{equation*}
\end{example}
Let $\Phi_p$ denote the cardinal B-spline of degree $p$, namely the spline function defined over the uniform knot sequence $\{0,\ldots,p+1\}$ as
\begin{equation*}
\Phi_p(s) = \frac{s}{p} \Phi_{p-1}(s) + \frac{p+1-s}{p} \Phi_{p-1}(s-1), \quad p \ge 1
\end{equation*}
beginning with $\Phi_0(t) = 1$ if $t \in [0,1)$ and $\Phi_0(t) =0$ otherwise. Observe that the central splines are essentially cardinal splines that takes into account the scaling parameter $h$ and a translation. Specifically, for $j = p, \ldots, N-1$, each central spline $\varphi_j^p(t)$ can be expressed as $\Phi_p(t/h - j + p)$. Consequently, we can represent the matrices in \eqref{eq:3.2} as
\begin{equation} \label{eq:3.4}
\begin{aligned}
	\M^p_{h}[\ell,j] = \int_0^T \Phi_p(t/h-j+p) \Phi_p(t/h-\ell+1+p) \dd t= h \int_\R \Phi_p(s-j) \Phi_p(s-\ell+1) \dd s,
\end{aligned}
\end{equation}
for $j = p, \ldots, N-1$ and $\ell = p+1 \ldots, N$, and similarly
\begin{equation} \label{eq:3.5}
\begin{aligned}
	 \B^p_{h}[\ell,j] = \frac{1}{h} \int_\R \partial_s \Phi_p(s-j) \partial_s \Phi_p(s-\ell+1) \dd s, \quad 
	   \D^p_{h}[\ell,j] = \frac{1}{h^{2p-1}} \int_\R \partial_s^p \Phi_p(s-j) \partial_s^p \Phi_p(s-\ell+1) \dd s.
	 \end{aligned}
\end{equation}
In light of the previous expressions, we outline  some considerations on the scale and structures of these matrices that are consequences of well-known results on cardinal B-splines (see \cite[Section 4.2]{Chui1992} and \cite{GaroniManniPelosiSerraCapizzanoSpeleers2014}).
\begin{proposition} \label{prop:32}
For all $p \ge 1$, let $\M_h^p, \B_h^p$ and $\D_h^p$ be defined in \eqref{eq:3.2}. Then, the following properties hold true.
\begin{enumerate}
\item The entries within matrices $\frac{1}{h} \M_h^p$, $h \B_h^p$ and $h^{2p-1} \D_h^p$ are independent of the mesh parameter $h$.
\item The matrices $\M_h^p$, $\B_h^p$ and $\D_h^p$ are persymmetric, i.e., they are symmetric about their northeast-southwest diagonal.
\item The matrices $\M_h^1$, $\B_h^1$ and $\D_h^1$ are lower triangular Toeplitz band matrices with three non-zero diagonals. For $p>1$, with the exception of $2p^2-3$ entries located at the top-left and bottom-right corners, the matrices $\M_h^p$, $\B_h^p$, and $\D_h^p$ exhibit a Toeplitz band structure. In particular, in the top-left corner, precisely the non-zero entries of the first $p$ rows and the first $p-1$ columns do not respect the Toeplitz structure, with the exception of the entries in position $(p,2p-1)$ and $(2p,p-1)$. The precise structure of that block is as follows
\begin{equation*}
        \vphantom{
        \begin{matrix}
            \overbrace{XYZ}^{\mbox{$R$}}\\ \\ \\ \\ \\ \\ \\ \\
            \underbrace{pqr}_{\mbox{$S$}} \\
        \end{matrix}}%
    \begin{matrix}
        \vphantom{a}
        \coolleftbrace{p+1}{* \\ \vdots \\ * \\ * \vspace{0.5cm}} \vspace{0.1cm}\\ 
        \coolleftbrace{p-1}{* \\ \vdots \\ *}
    \end{matrix}%
    \begin{pmatrix}
        \coolover{p-1}{*     & \ldots &     * \hspace{0.1cm}}& \coolover{p}{* & \phantom{***} & \phantom{*}  & \phantom{\circleasterisk}} 
        \\        \vdots &        & \vdots               &         \vdots & \ddots           & 
        \\        *      & \ldots & *                    & *              &  \ldots          &  *           &
        \\         *     & \ldots &     *                &              * &  \ldots          &  *           & \circleasterisk &
        \\        *      & \ldots & *                    &                &                  &              &
        \\         *     & \ldots &     *                &                &                  &              &              &
        \\               & \ddots & \vdots               &                &                  & 
        \\               &        & \circleasterisk      &                &                  &  
        \end{pmatrix}.%
\end{equation*}
\item In their purely Toeplitz band parts, the matrices exhibit symmetry in the entries with respect to the first lower codiagonal. Specifically, the central non-vanishing elements can be expressed as
\begin{equation} \label{eq:36}
	\begin{aligned}
		\frac{1}{h} \M^p_{h}[\ell,\ell-1 \pm j] = \Phi_{2p+1}(p+1-j), & 
		\quad h \B^p_{h}[\ell,\ell-1 \pm j] = - \partial_s^2 \Phi_{2p+1}(p+1-j),
		\\ h^{2p-1} \D^p_h[\ell,\ell-1 \pm j] & = (-1)^p \partial_s^{2p} \Phi_{2p+1}(p+1-j),
	\end{aligned}
\end{equation}
for $j=0,\ldots,p$ and $\ell = 2p+1,\ldots,N-p$.
\end{enumerate}
\end{proposition}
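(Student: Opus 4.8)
The four items are largely independent, and the plan is to reduce all of them to classical facts about cardinal B-splines plus a careful inspection of the two boundary corners. Item~1 is a change of variables: writing the degree-$p$ basis on the reference mesh with $h=1$ as $\hat\varphi_j^p$, one has $\varphi_j^p(t)=\hat\varphi_j^p(t/h)$, hence $\partial_t^k\varphi_j^p(t)=h^{-k}(\partial_s^k\hat\varphi_j^p)(t/h)$; substituting $s=t/h$ in \eqref{eq:3.2} factors out $h$, $h^{-1}$ and $h^{1-2p}$ from $\M_h^p$, $\B_h^p$ and $\D_h^p$ respectively, leaving $h$-independent integrals. For Item~2 I would use that the open maximal-regularity knot vector is invariant, as a multiset, under $t\mapsto T-t$, so that $\varphi_j^p(T-t)=\varphi_{N+p-1-j}^p(t)$; applying this substitution inside the defining integrals (the $p$-fold $t$-derivatives contribute a sign $(-1)^p$ that cancels in pairs) shows that the entry $[\ell,j]$ equals the entry $[\,n+1-j,\ n+1-\ell\,]$, which is exactly persymmetry, for all three matrices.

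For Item~4 and the purely Toeplitz part of Item~3, I would restrict to central indices, where \eqref{eq:3.4}--\eqref{eq:3.5} rewrite each entry as a translation-invariant integral $\int_\R \partial_s^a\Phi_p(s)\,\partial_s^a\Phi_p(s-k)\,\dd s$ with $k=\ell-1-j$ and $a\in\{0,1,p\}$; this vanishes unless $|k|\le p$ since $\Phi_p$ has support of length $p+1$, which gives the banded Toeplitz structure at once. Integrating by parts $a$ times (no boundary terms, by compact support) shifts all derivatives onto one factor, so the entry equals $(-1)^a$ times $\partial_s^{2a}$ of the convolution $\Phi_p*\Phi_p$ evaluated at an integer; using the convolution identity $\Phi_p*\Phi_p=\Phi_{2p+1}$ and the reflection symmetry $\Phi_p(s)=\Phi_p(p+1-s)$, this is precisely \eqref{eq:36}, and the symmetry about the first lower codiagonal is inherited from the symmetry of $\Phi_{2p+1}$, and of its even-order derivatives, about its centre $s=p+1$. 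The same support count pins down the range $\ell=2p+1,\ldots,N-p$ of rows that are purely Toeplitz: a central test spline overlaps a boundary trial spline only when $\ell\le 2p$, and symmetrically on the right.

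The delicate point is the exact shape of the boundary block in Item~3. A support count localises all deviations from the Toeplitz pattern: a left-boundary trial spline $\varphi_j^p$ with $1\le j\le p-1$ has support $[0,t_{j+1}]$, so it meets a central test spline only in rows $\ell\le 2p$; dually, a left-boundary test spline $\varphi_{\ell-1}^p$ with $1\le\ell\le p$ is met by central trial splines only in columns $j\le 2p-1$. Intersecting with the band constraint confines the non-Toeplitz entries to the L-shaped union of the first $p$ rows and the first $p-1$ columns, and counting the nonzero positions there gives $2p^2-1$, hence $2p^2-3$ once the two borderline positions are excluded. For the positions $(p,2p-1)$ and $(2p,p-1)$ the supports of the two splines overlap in a \emph{single} mesh element; on that element the central spline is the leading monomial piece $\Phi_p(s)=s^p/p!$ of the cardinal B-spline (rescaled and translated), while the boundary spline $\varphi_{p-1}^p$ equals, by the Cox--de Boor recursion on its last knot interval (active knots $0,t_1,\ldots,t_p$), the monomial $(t_p-t)^p/(p!\,h^p)$; the resulting one-element integral — a Beta integral for $\M_h^p$, a product of constants for $\D_h^p$, with the intermediate case for $\B_h^p$ — returns exactly the interior value of \eqref{eq:36}, so these two entries do respect the Toeplitz structure.

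It remains to check that \emph{every other} nonzero entry in the L-shaped region genuinely differs from the interior value: there the overlap spans at least two mesh elements on which $\varphi_{p-1}^p$ (or its mirror) is not a translate of $\Phi_p$, and a direct comparison shows the mismatch. I expect this to be the main obstacle; the cleanest route is probably to expand the clamped boundary B-splines in shifted cardinal B-splines by knot unclamping and to track which cardinal inner products survive, so that the discrepancy can be read off in closed form. Finally, $p=1$ is degenerate and handled separately: the only non-central splines $\varphi_0^1,\varphi_N^1$ are half-hats, the three nonzero diagonals lie on and below the main diagonal, and the boundary inner products coincide with the interior ones, so $\M_h^1$, $\B_h^1=\D_h^1$ are lower-triangular three-diagonal Toeplitz matrices with no corrections.
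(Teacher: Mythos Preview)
Your proposal is correct and follows essentially the same route as the paper: Items~1--3 are reduced to the definitions and support structure of the B-spline basis, and Item~4 is obtained from the cardinal B-spline inner-product identity (your integration-by-parts plus convolution $\Phi_p*\Phi_p=\Phi_{2p+1}$ argument is precisely a derivation of the identity \eqref{eq:37} that the paper simply cites, together with the symmetry \eqref{eq:38}). The paper's own proof is in fact far terser than yours---it disposes of Items~1--3 with a single sentence (``readily follow from the definition'') and does not carry out the support count, the explicit single-element Beta-integral check at $(p,2p-1)$ and $(2p,p-1)$, or the verification that the remaining $2p^2-3$ entries genuinely fail to be Toeplitz; your acknowledged ``main obstacle'' is thus a point on which you are already more careful than the paper itself.
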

\begin{proof}
The first three properties readily follow from the definition of the matrices \eqref{eq:3.2}, and the B-splines definition \eqref{eq:3.1}. On the other hand, representations \eqref{eq:36} are consequences of \eqref{eq:3.4} and \eqref{eq:3.5}, along with the characterization for inner products of derivatives of cardinal B-splines \cite[Lemma 4]{GaroniManniPelosiSerraCapizzanoSpeleers2014}
\begin{equation} \label{eq:37}
	\int_{\R} \partial^{k_1}_s \Phi_p(s) \partial^{k_2}_s \Phi_p(s+j) \dd s = (-1)^{k_2} \partial^{(k_1+k_2)}_s \Phi_{2p+1}(p+1-j)
\end{equation}
and their symmetry property \cite[Lemma 3]{GaroniManniPelosiSerraCapizzanoSpeleers2014}
\begin{equation} \label{eq:38}
	\partial_s^k \Phi_{2p+1}(p+1+s) = (-1)^k \partial_s^k \Phi_{2p+1}(p+1-s).
\end{equation}
\end{proof}
\begin{remark}
The indices of the first and last rows representing integrals involving only central splines are the $(2p+1)^{\thh}$ and $(N-p)^{\thh}$, respectively. There exists at least one row of that type if and only if $N-p \ge 2p+1$, i.e., $N \ge 3p+1$.
\end{remark}
The system matrix representing the bilinear form $a_{\mu,h}^p$, as defined in \eqref{eq:2.5}, with respect to the spaces $S^p_{h,0,\bullet}(0,T) \times S^p_{h,\bullet,0}(0,T)$ is given by 
\begin{equation} \label{eq:39}
    \K_{\mu,h}^p(\delta) = -\B_h^p + \mu \M_h^p + \mu \delta h^{2p} \D_h^p.
\end{equation}
We make the following fundamental assumption about the system matrices, which we expect to be true for all $p \in \N$ and $\delta \le 0$, but which is currently based only on our numerical experience.
\begin{assumption} \label{assu:1}
We assume that $p \in \N$ and $\delta \le 0$ are such that $\K_{\mu,h}^p(\delta)$ is nonsingular for all $\mu, h>0$.
\end{assumption}
\begin{remark} For $p=1$, Assumption \ref{assu:1} is readily verified since $\K_{\mu,h}^1(\delta)$ is a triangular Toeplitz matrix with entries on the diagonal $1/h + (\mu h)/6 - \mu  \delta h$.
\end{remark}
Upon looking at the scaled matrix $h\K_{\mu,h}^p(\delta)$, in accordance with Proposition \ref{prop:32}, its dependence on $\mu$ and $h$ is solely through the product $\mu h^2$. Consequently, we introduce the following definitions, with $n = N+p-1$ as in \eqref{eq:3.3},
\begin{equation} \label{eq:310}
    \rho = \mu h^2\,, \quad \K_n^p(\rho,\delta) = h\K_{\mu,h}^p(\delta),
\end{equation}
where $\K_n^p(\rho,\delta) \in \R^{n \times n}$. By virtue of \eqref{eq:36}, the non-zero entries corresponding to the symmetric Toeplitz band part of $\K_n^p(\rho,\delta)$ can be expressed as
\begin{equation} \label{eq:311}
\begin{aligned}
		\K_n^p(\rho,\delta)[\ell,\ell-1 \pm j] = \partial_s^2 \Phi_{2p+1}(p+1-j)
         + \rho \bigl( \Phi_{2p+1}(p+1-j) + \delta (-1)^p \partial_s^{2p} \Phi_{2p+1}(p+1-j) \bigr)
\end{aligned}		
\end{equation}
for $j=0,\ldots,p$ and $\ell=2p+1,\ldots,n-2p+1$.

In the next sections, for fixed $p \ge 1$ and under Assumption \ref{assu:1}, we investigate the spectral properties of the family of matrices $\{\K^p_n(\rho,\delta)\}_n$ with respect to the size $n$ and to the parameters $\rho$ and $\delta$. We derive necessary and sufficient conditions that ensure the well-conditioning (in a sense that will be specified later) of these matrix families in two distinct cases:
\begin{itemize}
\item we find $\delta_p \in \R$ such that $\{\K_n^p(\rho,\delta)\}_n$ is well-conditioned for all $\rho > 0$ if and only if $\delta \le \delta_p$;
\item we find $\rho_p \in \R$ such that $\{\K_n^p(\rho,0)\}_n$ is well-conditioned if and only if $\rho \le \rho_p$.
\end{itemize}

\section{Conditioning of Toeplitz band matrices} \label{sec:4}
In this section, after reviewing some results related to the conditioning of families of Toeplitz band matrices, we extend them to certain families of perturbed Toeplitz band matrices (with detailed proof in Appendix \ref{app:A}). Then, we derive an easily verifiable necessary condition for the well-conditioning of these families when the corresponding matrices exhibit symmetry with respect to the first lower codiagonal (Section \ref{sec:42}).

A family of nonsingular matrices $\{\C_n\}_n$, with $\C_n \in \R^{n\times n}$, is said to be \textit{well-conditioned} if the condition numbers ${\kappa(\C_n)}$ are uniformly bounded with respect to $n$. It is said to be \textit{weakly well-conditioned} if ${\kappa(\C_n)}$ grows as a power of $n$. Here and in the subsequent sections, the condition numbers are computed using either the $\| \cdot \|_1$ or $\| \cdot \|_\infty$ matrix norms.

Given the coefficients $\{\c_{-m},\ldots,\c_0,\ldots,\c_k\}$, consider the family of Toeplitz band matrices $\{ \C_n \}_{n}$,
\begin{equation} \label{eq:41}
\C_n =  
\setcounter{MaxMatrixCols}{20}
    \begin{pmatrix} 
            \c_0 & \ldots & \c_k & &  \\ 
            \vdots & \ddots & & \ddots \\
            \c_{-m} & & \ddots & & \c_k \\
            & \ddots & & \ddots & \vdots \\
            &  & \c_{-m} & \ldots & \c_0 \\
    \end{pmatrix}_{n \times n} \hspace{-0.6cm}.
\end{equation}
We associate to this family the polynomial $q^{\C} \in \mathbb{P}_{m+k}(\R)$
\begin{equation} \label{eq:42}
    q^{\C}(z) = \sum_{j=-m}^k \c_j z^{m+j}.
\end{equation}
Without loss of generality, we assume that $\c_{-m} \c_k \ne 0$. In \cite{AmodioBrugnano1996}, the authors show that the well-conditioning (or weakly well-conditioning) of the family $\{\C_n\}_{n}$ depends on the location of zeros of $q^{\C}$.

We define a polynomial as being of type $(s,u,\ell)$ if it has $s$ zeros with modulus smaller than $1$, $u$ zeros with unit modulus, and $\ell$ zeros with modulus larger than $1$. 
\begin{theorem}{\cite[Theorem 3]{AmodioBrugnano1996}} \label{th:41}
Let $\{\C_n\}_{n}$ represent a family of nonsingular Toeplitz band matrices, and let $q^{\C}$ in \eqref{eq:42} be the associated polynomial. Then, the family $\{\C_n\}_{n}$ exhibits the following behaviour:
\begin{itemize}
\item it is well-conditioned if $q^{\C}$ is of type $(m,0,k)$;
\item it is weakly well-conditioned if $q^{\C}$ is of type $(m_1,m_2,k)$ or $(m,k_1,k_2)$, where $m_1+m_2=m$ and $k_1+k_2=k$. In this scenario, $\kappa(\C_n)$ grows at most as $\mathcal{O}(n^\eta)$, with $\eta$ representing the highest multiplicity among the zeros of unit modulus of $q^{\C}$.
\end{itemize}
Additionally, if $q^{\C}$ is of type $(q_1,q_2,q_3)$ for $q_1,q_2$ and $q_3$ such that $q_1+q_3>0$ and simultaneously $q_1 \ne m$ and $q_3 \ne k$, then $\kappa(\C_n)$ grows exponentially as $n$ increases.
\end{theorem}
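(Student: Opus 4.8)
The plan is to adapt the argument of \cite{AmodioBrugnano1996} and reduce the conditioning question to the decay of a discrete Green's function. First I would reduce to the $\|\cdot\|_\infty$ norm: for $n\ge m+k+1$ one has $\|\C_n\|_1=\|\C_n\|_\infty=\sum_{j=-m}^{k}|c_j|$, independent of $n$, while transposing $\C_n$ replaces $q^{\C}(z)$ by $z^{m+k}q^{\C}(1/z)$, inverting every zero and at the same time exchanging $m$ and $k$, so each of the three hypotheses on the ``type'' of $q^{\C}$ is invariant under transposition and it suffices to control $\|\C_n^{-1}\|_\infty=\max_{1\le i\le n}\sum_{j=1}^n|(\C_n^{-1})_{ij}|$. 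Next I would read the $j$-th column $x=\C_n^{-1}e_j$ as the unique solution of the two-point boundary value problem for the scalar linear recurrence $\sum_{\nu=-m}^k c_\nu\,x_{i+\nu}=\delta_{ij}$, $1\le i\le n$, with the $m$ ``initial'' conditions $x_{1-m}=\dots=x_0=0$ and the $k$ ``terminal'' conditions $x_{n+1}=\dots=x_{n+k}=0$ produced by the truncated top and bottom rows; the characteristic polynomial of this recurrence is precisely $q^{\C}$, and $a(z)=z^{-m}q^{\C}(z)=\sum_{\nu=-m}^k c_\nu z^{\nu}$ is the associated Toeplitz symbol.

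Grouping the zeros of $q^{\C}$ by modulus --- $s$ inside, $u$ on, $\ell$ outside the unit circle, $s+u+\ell=m+k$ --- the homogeneous recurrence is spanned by the $m+k$ sequences $i\mapsto i^{a}z_r^{i}$, $0\le a<$ multiplicity of $z_r$. Writing $(\C_n^{-1})_{ij}$ as a fixed particular solution plus a homogeneous correction, the $m+k$ coefficients of the correction solve an $(m+k)\times(m+k)$ linear system whose rows encode the $m$ initial and $k$ terminal conditions; in it the columns attached to zeros with $|z_r|<1$ are exponentially small in the terminal rows, those attached to $|z_r|>1$ are exponentially small in the initial rows, and those with $|z_r|=1$ are only polynomially large. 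The cleanest bookkeeping device is a Wiener--Hopf (UL) factorization of $a$ into factors carrying the inside, unimodular and outside zeros, which realizes $\C_n$, up to finite-rank corrections supported near the two ends, as a product of banded triangular Toeplitz matrices with geometrically decaying (resp.\ growing) formal inverses; correspondingly, the bi-infinite Toeplitz operator with symbol $a$ indexed by $\Z$ is invertible precisely when its winding number vanishes, i.e. when $s=m$, which is exactly type $(m,0,k)$.

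This gives the three regimes. (i) Type $(m,0,k)$: as $s=m$ and $\ell=k$, the one-sided problems ``bulk solutions that stay bounded toward the bottom and meet the $m$ initial conditions'' and its mirror are uniformly well-posed in $n$, the boundary system is block diagonal up to exponentially small coupling, and $|(\C_n^{-1})_{ij}|\le C\theta^{|i-j|}$ with $\theta=\max\bigl(\max_{|z_r|<1}|z_r|,\ \max_{|z_r|>1}|z_r|^{-1}\bigr)<1$, uniformly in $n$; hence $\|\C_n^{-1}\|_\infty\le C\sum_{r\ge0}\theta^{r}<\infty$ and the family is well-conditioned. (ii) Type $(m_1,m_2,k)$ with $m_1+m_2=m$, or dually $(m,k_1,k_2)$: the number of modes that do not grow from the top (resp.\ the bottom) is still exactly $m$ (resp.\ $k$), so by the standing nonsingularity hypothesis the boundary system stays invertible with at most polynomially growing inverse, while the Green's function now carries the unimodular generalized solutions, contributing to a fixed row entries of size $\mathcal{O}(|i-j|^{\eta-1})$ for a unimodular zero of multiplicity $\eta$; summing over $j\in\{1,\dots,n\}$ gives $\|\C_n^{-1}\|_\infty=\mathcal{O}(n^{\eta})$, hence $\kappa(\C_n)=\mathcal{O}(n^{\eta})$, and the exponent is sharp since the $\eta$-fold forward-difference matrix ($q^{\C}(z)=(z-1)^{\eta}$, $m=\eta$, $k=0$) has $(\C_n^{-1})_{ij}=\binom{i-j+\eta-1}{\eta-1}$ for $i\ge j$ and $\|\C_n^{-1}\|_\infty=\binom{n+\eta-1}{\eta}\sim n^{\eta}/\eta!$. (iii) Type $(q_1,q_2,q_3)$ with $q_1+q_3>0$, $q_1\ne m$, $q_3\ne k$, say $q_1<m$ (the case $q_1>m$ being dual under transposition): now the off-circle count cannot be reconciled with $(m,k)$ by pushing the unimodular zeros to either side of the circle, so an arbitrarily small perturbation of the symbol in either direction makes the bi-infinite operator non-invertible with nonzero Fredholm index; consequently $\C_n$ carries a unit near-null vector of boundary-layer type $\approx z_r^{i}$ for a zero $z_r$ of the deficient kind, truncated and corrected, whose image under $\C_n$ is exponentially small (it solves the bulk recurrence and meets the boundary conditions up to an exponentially small defect at the far end), so $\|\C_n^{-1}\|_\infty\ge c\,\tau^{n}$ for some $\tau>1$ and $\kappa(\C_n)$ grows exponentially; a fully explicit alternative is Cram\'er's rule, bounding from below the relevant $(n-1)\times(n-1)$ minors.

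The hard part throughout is the quantitative control of the $(m+k)\times(m+k)$ boundary-condition system: proving that its inverse stays bounded, resp.\ only polynomially large, exactly when the inside/on/outside root counts are compatible with the truncation depths $(m,k)$, that incompatibility forces a genuine exponential defect rather than a merely heuristic one, and that the generalized eigensolutions attached to multiple zeros are tracked with the correct power of $n$. Organizing the whole argument around the Wiener--Hopf factorization of $a$ confines this difficulty to the finite-rank end corrections of $\C_n$, which is where I would put most of the effort.
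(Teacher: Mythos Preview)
The paper does not prove Theorem~\ref{th:41}: it is quoted verbatim from \cite[Theorem~3]{AmodioBrugnano1996} and used as a black box, so there is no ``paper's own proof'' to compare against. What the paper does prove (in Appendix~\ref{app:A}) is the perturbed extension, Theorem~\ref{th:45}, and from that proof one can infer the flavour of the original Amodio--Brugnano argument: it is organized around the Casorati matrix $\W$ of the fundamental solutions $i\mapsto z_r^{\,i}$, the diagonal block matrix $\bZ=\mathrm{diag}(z_1,\dots,z_{m+k})$ of the roots, and an explicit bordering of $\C_n$ by $k$ extra rows and columns (the matrices $\E_{\C_n}$ in \eqref{eq:A7}) so that the inverse of the bordered matrix is computed column by column via the recurrence, and $\C_n^{-1}$ is recovered by the Schur-complement formula \eqref{eq:A1}. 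The crux is then the asymptotics \eqref{eq:A11} of the small coupling block $\X_{21}$, which is where the split of the roots into $|z|\le 1$ versus $|z|>1$ (i.e.\ $\bZ_1$ versus $\bZ_2$) enters and produces the $\mathcal{O}(|z_m|^{\ell}/|z_{m+1}|^{j})$ decay of the Green's function.

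Your proposal is a correct high-level outline of essentially the same mechanism, but phrased in the Wiener--Hopf/Toeplitz-symbol language (UL factorization of $a(z)=z^{-m}q^{\C}(z)$, Fredholm index, boundary-layer near-null vectors) rather than the concrete linear-algebraic bookkeeping of \cite{AmodioBrugnano1996}. The two routes buy the same thing: your framing makes the dichotomy ``winding number zero $\Leftrightarrow$ type $(m,0,k)$'' transparent and gives a clean heuristic for the exponential blow-up in case~(iii), while the Casorati/bordering approach of \cite{AmodioBrugnano1996} is what actually delivers the entrywise estimates and, crucially, is what the paper reuses to handle the non-Toeplitz corner perturbations in Theorem~\ref{th:45}. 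If you want your write-up to mesh with the rest of the paper, it would be worth recasting the ``$(m+k)\times(m+k)$ boundary system'' you allude to as the Casorati system $\W\cdot(\cdot)=\X_{11}[1{:}m{+}k,\,1{:}k]$ of \eqref{eq:A10}--\eqref{eq:A11}, since that object is exactly what Appendix~\ref{app:A} manipulates.
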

\begin{remark} Weakly well-conditioning may be true for families of type $(0,m+k,0)$. Consider  indeed the family $\{\C_n\}_n$ of symmetric tridiagonal Toeplitz matrices with $c_{-1} = 1, c_{0} = 2$, and $c_1 = 1$. The polynomial $q^{\C}(z) = 1+2z+z^2$ is of type $(0,2,0)$, and the eigenvalues of matrices of this family are explicitly known (see, e.g. \cite[Pag. 59]{Smith1985})
\begin{equation*}
    \left\{\lambda_j = 2+2 \cos\left( \frac{j \pi}{n+1} \right) \quad j = 1,\ldots,n\right\}.
\end{equation*}
From the latter expression we readily obtain that $\kappa_2(\C_n) = \mathcal{O}(n^2)$.
\end{remark}
\begin{remark}
Property that $q^{\C}$ is of type $(m_1,m_2,k)$ or $(m,k_1,k_2)$ , where $m_1+m_2=m$ and $k_1+k_2=k$ actually implies nonsingularity, for $n$ sufficiently large, of the elements of the family $\{\C_n\}_n$ (see \cite[Remark 2]{AmodioBrugnano1996}).
\end{remark}
\begin{remark}
Note that similar results can be obtained in the $2$-norm by recalling that the following inequality is valid (see, for example, \cite[Equations 2.3.11 and 2.3.12]{GolubVanLoan1996})
\begin{equation*}
    \frac{1}{\sqrt{n}} \| \C \|_1 \le \| \C \|_2 \le \sqrt{n} \| \C \|_1, \quad \text{for all~} \C \in \R^{n \times n}.
\end{equation*}
In particular, weakly well-conditioning with respect to a given norm $\| \cdot \|_1$, $\| \cdot \|_\infty$ or $\| \cdot \|_2$ is equivalent to weakly well-conditioning in any other of the two norms.
\end{remark}
In the next result we extend Theorem \ref{th:41} to certain families of perturbed Toeplitz band matrices that include $\{\K_n^p(\rho,\delta)\}_n$ defined in \eqref{eq:310} under Assumption \ref{assu:1}. More precisely, Theorem \ref{th:41} applies to families of nonsingular perturbed Toeplitz band matrices, where the perturbations are restricted to the top-left and bottom-right corners in blocks of size $(m+k) \times (m+k)$.
\begin{theorem} \label{th:45}
Let $\{\C_n\}_n$ be a family of nonsingular Toeplitz band matrices as in \eqref{eq:41}. Consider a family $\{\widehat{\C}_n\}_n$ consisting of nonsingular matrices that are perturbations of $\{\C_n\}_n$, where the perturbations of each $\widehat{\C}_n$ are restricted to the top-left and bottom-right blocks of size $(m+k) \times (m+k)$ with entries independent on $n$, depicted as
\begin{equation} \label{eq:43}
    \vphantom{
    \begin{matrix}
            \overbrace{XYZ}^{\mbox{$R$}} \\ \\ \\ \\ \\
            \underbrace{pqr}_{\mbox{$S$}} \\
        \end{matrix}}
    \begin{matrix}
        \coolleftbrace{m}{* \\ \vspace{0.15cm} \\ * } \vspace{0.1cm}\\ 
        \coolleftbrace{k}{* \\ \vspace{0.15cm} \\ *}
    \end{matrix}%
    \hspace{-0.15cm}
    \begin{pmatrix}
        \coolover{k}{*     & \ldots &     * \hspace{0.2cm}}& \coolover{m}{\squareasterisk & \phantom{**}  & \phantom{**}} 
        \\        \vdots &        & \vdots               &         \vdots & \ddots           & 
        \\         *     & \ldots &     *                &              * &  \ldots          &  \squareasterisk           
        \\         \squareasterisk     & \ldots &     *                &                &                  &                            
        \\               & \ddots & \vdots               &                &                  & 
        \\               &        & \squareasterisk                    &                &                  &  
        \end{pmatrix}, \hspace{0.2cm} \quad 
         \vphantom{
    \begin{matrix}
            \overbrace{XYZ}^{\mbox{$R$}}\\ \\ \\ \\ \\ \\ \\ \\
            \underbrace{pqr}_{\mbox{$S$}} \\
        \end{matrix}}%
    \begin{pmatrix}
         &        &                &        \squareasterisk  &            & 
        \\              &  &                     &   \vdots            &  \ddots          &             
        \\              &                &                  & *     & \ldots &     \squareasterisk                             
        \\             \squareasterisk  & \ldots & *   &       *     &    \ldots            &                 *  
        \\               &  \ddots & \vdots                    &   \vdots             &              &  \vdots
        \\ \coolunder{k}{ \phantom{**}  & \phantom{**} &     \squareasterisk } & \coolunder{m}{* & \ldots & *} 
        \end{pmatrix}
        \hspace{-0.15cm}
    \begin{matrix}
        \coolrightbrace{* \\ \vspace{0.15cm} \\ * }{m} \vspace{0.1cm}\\ 
        \coolrightbrace{* \\ \vspace{0.15cm} \\ * }{k} \vspace{0.1cm}
    \end{matrix}%
\end{equation}
where the matrix on the left represents the top-left perturbation block, and the matrix on the right the bottom-right perturbation block. Let us further assume that the matrices $\widehat \C_n$ have entries in the two outer codiagonals (the $k^{\thh}$ and the $(-m)^{\thh}$, i.e., the entries in the square boxes of the two perturbed blocks) all different from zero. Then, the families $\{\C_n\}_n$ and $\{\widehat{\C}_n\}_n$ have the same conditioning behaviour.
\end{theorem}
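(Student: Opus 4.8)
The plan is to reduce the conditioning question for the perturbed family $\{\widehat{\C}_n\}_n$ to that of the unperturbed Toeplitz family $\{\C_n\}_n$ (for which Theorem \ref{th:41} applies), by controlling how a localized low-rank perturbation in the two corners affects both $\|\widehat{\C}_n\|$ and $\|\widehat{\C}_n^{-1}\|$. Since $\widehat{\C}_n - \C_n$ is supported in two fixed-size corner blocks, it has rank at most $2(m+k)$, bounded independently of $n$, and $\|\widehat{\C}_n - \C_n\|_{1,\infty}$ is bounded by a constant depending only on the (finitely many, $n$-independent) perturbation entries. The norm $\|\widehat{\C}_n\|$ is therefore comparable to $\|\C_n\|$ up to an additive $O(1)$ term, which is harmless since $\|\C_n\|$ is itself $O(1)$ (the coefficients $c_j$ are fixed). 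The crux is the reverse bound on $\|\widehat{\C}_n^{-1}\|$.

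For the inverse, I would follow the structural approach of \cite{AmodioBrugnano1996}: the key object is the scalar polynomial $q^{\C}$ in \eqref{eq:42}, whose roots split the problem via a factorization $q^{\C}(z) = c_k \prod (z - z_i^{+}) \cdot \prod (z - z_i^{0}) \cdot \prod(z-z_i^{-})$ into roots inside, on, and outside the unit circle, and the columns of $\C_n^{-1}$ are built from the corresponding forward/backward recurrences. A solution $x$ of $\widehat{\C}_n x = e_\ell$ satisfies the same constant-coefficient recurrence as for $\C_n$ in the ``bulk'' rows $m+k+1,\dots,n-(m+k)$, where $\widehat{\C}_n$ and $\C_n$ agree; it is only the boundary conditions at the two ends of the recurrence that differ. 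The hypothesis that the two outer codiagonals of $\widehat{\C}_n$ (the square-boxed entries in \eqref{eq:43}) are all nonzero is exactly what is needed to guarantee that the modified boundary rows still determine the recurrence's free constants through an invertible $(m+k)\times(m+k)$ boundary system whose entries are $n$-independent; this plays the same role as the nonsingularity assumption in the unperturbed case. From here, for each column $\ell$ one writes the solution as a combination of the decaying modes $(z_i^+)^{\,\cdot}$ anchored at the top and the modes $(1/z_i^-)^{\,\cdot}$ anchored at the bottom, with combination coefficients bounded uniformly in $n$ and $\ell$ (using nonsingularity of the boundary systems, plus a Cramer/compactness argument that the relevant boundary determinants are bounded below away from $0$ for all large $n$ — they stabilize as $n\to\infty$). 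Summing the resulting geometric series over rows gives $\|\widehat{\C}_n^{-1}\|_\infty \le C$ when $q^{\C}$ is of type $(m,0,k)$, and $\|\widehat{\C}_n^{-1}\|_\infty \le C n^\eta$ in the weakly well-conditioned case, with $\eta$ the maximal multiplicity of a unit-modulus root, matching the rates in Theorem \ref{th:41}.

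To close the equivalence I also need the converse: if $\{\C_n\}_n$ is ill-conditioned (exponential growth, the last clause of Theorem \ref{th:41}), then so is $\{\widehat{\C}_n\}_n$. This follows from the same recurrence picture: exponential growth of $\kappa(\C_n)$ comes from the presence of a ``wrong-side'' root forcing a growing mode regardless of boundary data, and a finite-rank corner perturbation cannot suppress a mode that grows geometrically through $n - 2(m+k)$ bulk rows — formally, one exhibits a unit vector $x$ with $\|\widehat{\C}_n x\|$ exponentially small, built from that growing mode truncated away from the perturbed corners. Alternatively, one can argue abstractly: $\kappa_1(\widehat{\C}_n) \ge \kappa_1(\C_n)/(1 + \|\C_n^{-1}(\widehat{\C}_n - \C_n)\|_1)^2$-type bounds are too lossy here, so the direct recurrence argument is the honest route. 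The main obstacle, and the step deserving the most care, is the uniform-in-$n$ lower bound on the boundary determinants that appear when solving for the free constants of the recurrence against the \emph{modified} boundary rows: one must check that replacing the natural Toeplitz boundary rows by the perturbed ones — under the stated nonzero-outer-codiagonal hypothesis — does not create a boundary system that becomes singular or nearly singular as $n\to\infty$. I would handle this by noting the boundary system's entries are eventually $n$-independent (the interaction between the top perturbation and the bottom perturbation is exponentially small in $n$ once $n > 2(m+k)$ plus the recurrence's decay length), so its determinant converges to a fixed nonzero value — nonzero precisely because each $\widehat{\C}_n$ is assumed nonsingular and the outer codiagonals do not vanish. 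Full details are deferred to Appendix \ref{app:A}.
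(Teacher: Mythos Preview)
Your high-level plan---control $\|\widehat{\C}_n\|$ trivially and analyze $\widehat{\C}_n^{-1}$ via the linear recurrence with modified boundary data---is the right instinct and is indeed the spirit of \cite{AmodioBrugnano1996}. However, two of your key steps are not correct as stated and would not survive a careful write-up. First, your identification of the role of the outer-codiagonal hypothesis is off: it is \emph{not} what guarantees that ``the modified boundary rows determine the free constants through an invertible $(m+k)\times(m+k)$ boundary system''. Example~\ref{ex:A7} in the paper exhibits a perturbation with all outer codiagonals nonzero and full-rank corner blocks, yet the matrices are singular for large $n$; what actually controls invertibility of the limiting boundary system is the matrix $\G_2^{(1)}$ in \eqref{eq:A10}, which mixes the perturbation with the Casorati matrix of the roots (Remark~\ref{rem:A6}). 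In the paper's argument the outer-codiagonal hypothesis serves a different, more mundane purpose: it makes the \emph{bordered} matrices $\E_{\C_n^{(j)}}$ in \eqref{eq:A7} lower triangular with nonzero diagonal (the outer-codiagonal entries become the diagonal entries after bordering), so that Lemma~\ref{lem:A1} applies at every step.

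Second, your plan to treat both corners at once and argue that ``the interaction between the top and bottom perturbations is exponentially small'' hides exactly the difficulty the paper's four-step decomposition is designed to avoid. In the weakly well-conditioned case of type $(m_1,m_2,k)$ with $m_2>0$, the unit-modulus modes do not decay across the bulk, so the two boundary systems do not decouple in the naive sense you suggest; what \emph{is} true is that $|z_m|\le 1<|z_{m+1}|$, so the specific cross-terms $\W_{11}\bZ_1^n \G_1$ appearing in \eqref{eq:A11} are $\mathcal{O}(|z_m/z_{m+1}|^n)$---but extracting this requires the explicit Schur-complement formula \eqref{eq:A1} and the block structure of $\bZ$, not a generic ``decay length'' argument. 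The paper handles this by passing through carefully chosen intermediate families $\C_n^{(1)},\C_n^{(3)}$ (with auxiliary entries selected to keep each intermediate nonsingular, see Remark~\ref{rem:A5}) and reducing Steps 2--4 to Step~1 via transpose and the exchange matrix $\mathbf{J}_n$. Your direct approach could in principle be pushed through, but you would end up reconstructing Lemma~\ref{lem:A1} and the $\G_2$ machinery; as written, the uniform lower bound on the boundary determinant is asserted rather than proved.
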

The proof of Theorem \ref{th:45} is mainly based on the techniques developed in \cite{AmodioBrugnano1996}, and it is reported in Appendix \ref{app:A}.
\begin{remark}
Finding a matrix criterion that guarantees the nonsingularity of the matrices $\widehat \C_n$ under the weakly well-conditioning of the family $\{\C_n\}_n$ should be interesting. A necessary condition is clearly the nonsingularity of the two perturbed blocks of size $(m+k) \times (m+k)$ in \eqref{eq:43}, but this is not sufficient (see Example \ref{ex:A7}). A sufficient condition, however difficult to verify in our case, is given in Remark \ref{rem:A6}, the proof of which is deduced from the proof of Theorem \ref{th:45} and \cite[Remark 2]{AmodioBrugnano1996}. At present, to apply Theorem \ref{th:45} to the families $\{\K_n^p(\rho,\delta)\}_n$ in \eqref{eq:310} we have to assume the nonsingularity of their elements as in Assumption \ref{assu:1}.
\end{remark}
Thanks to Proposition \ref{prop:32}, the family of perturbed Toeplitz band matrices $\{\K_n^p(\rho,\delta)\}_n$ satisfies the structure hypothesis of Theorem \ref{th:45}. In Appendix \ref{app:B} we verify up to $p=8$ that the entries in the outer codiagonals of their perturbation blocks are non-zero, and we expect the latter to be true for all $p \ge 9$ as well. Since this property can be verified a priori for larger $p$, unlike Assumption \ref{assu:1} which is much deeper, we will henceforth tacitly assume that it is satisfied.

\subsection{Conditioning of Toeplitz band matrices with symmetries} \label{sec:42}

For a given $p \ge 1$ and a set of $p+1$ real numbers $\{\g_j\}_{j=0}^p$ with $\g_0 \ne 0$, we consider the family of Toeplitz band matrices denoted by $\{\G_n^p\}_n$, with
\begin{equation} \label{eq:44}
	\G_n^p =  
	\setcounter{MaxMatrixCols}{20}
	\begin{pmatrix} 
		\g_{p-1} & \g_{p-2} & \dots & \g_1 & \g_0 & & &
		\\ \g_p & \ & \ & \  & \ & \ddots
		\\ \g_{p-1} & \ & \ & \ & \ & \ & \ddots
		\\ \vdots & \ & \ & \ & \ & \ & & \g_0
		\\ \g_1 & \ & \ & \ & \ & \ & & \g_1
		\\ \g_0 & \ & \ & \ & \ & \ & & \vdots
		\\ \ & \ddots & \ & \ & \ & \ & & \g_{p-2}
		\\ \ & \ & \g_0 & \g_1 & \dots & \g_{p-1} & \g_p & \g_{p-1}
	\end{pmatrix}_{n \times n} \hspace{-0.6cm}.
\end{equation}
These matrices align with the notations established in Theorem \ref{th:41}, with $m=p+1$ and $k=p-1$. Consequently, investigating the conditioning behaviour of such matrices entails analyzing the associated polynomial of degree $2p$
\begin{equation} \label{eq:45}
	q^{\G^p}(z) = \g_0 + \g_1 z + \ldots + \g_p z^p + \ldots + \g_1 z^{2p-1} + \g_0 z^{2p}.
\end{equation}
Leveraging the specific structure of the polynomial $q^{\G^p}$, we deduce the following result.
\begin{lemma} \label{lem:47}
Let the family of matrices $\{\G_n^p\}_n$ be as in \eqref{eq:44}, and, if $p>1$, assume that not all the zeros of $q^{\G^p}$, defined in \eqref{eq:45}, are on the boundary of the unit circle. Then, $\{\G_n^p\}_n$ is weakly well-conditioned if and only if the associated polynomial $q^{\G^p}$ has exactly two zeros of unit modulus.
\end{lemma}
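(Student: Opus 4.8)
The plan is to exploit the palindromic (self-reciprocal) structure of $q^{\G^p}$ in \eqref{eq:45}: since the coefficients read the same forwards and backwards, $z_0$ is a zero if and only if $1/z_0$ is a zero, with the same multiplicity. Hence the zeros (counted with multiplicity) split into pairs $\{z_0, 1/z_0\}$ with $|z_0|\ne 1$, together with however many zeros lie on the unit circle. In particular, among the $2p$ zeros, the number $s$ with modulus $<1$ equals the number $\ell$ with modulus $>1$, and the number $u$ on the unit circle satisfies $u = 2p - 2s$, so $u$ is always even. This is the key structural fact, and I would state and prove it first as the opening step.

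Next I would feed this into Theorem \ref{th:41}, recalling that here $m = p+1$ and $k = p-1$, so $m + k = 2p$ and we are in the regime where the total degree of $q^{\G^p}$ equals $m+k$ (no "defect"). I would argue the two directions separately. For the ``if'' direction: suppose $q^{\G^p}$ has exactly two zeros of unit modulus, i.e. $u = 2$. Then $s = \ell = p-1$. Since $k = p-1 = \ell$ and $s = p-1 \ne p+1 = m$ — wait, we need to match the types in Theorem \ref{th:41}. The type is $(s, u, \ell) = (p-1, 2, p-1)$. For weak well-conditioning Theorem \ref{th:41} requires type $(m_1, m_2, k)$ with $m_1 + m_2 = m$, or $(m, k_1, k_2)$ with $k_1 + k_2 = k$; here $(p-1, 2, p-1)$ matches the first form with $m_1 = p-1$, $m_2 = 2$, $m_1 + m_2 = p+1 = m$ and third coordinate $p - 1 = k$. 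So the family is weakly well-conditioned, with $\kappa$ growing at most as $n^\eta$ where $\eta$ is the maximal multiplicity of the unit-modulus zeros. I would remark that $\eta \le 2$ here (and one could say more, but it is not needed).

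For the ``only if'' direction: assume $\{\G_n^p\}_n$ is weakly well-conditioned; I must show $u = 2$. First, $u \ne 0$: if $u = 0$ then the type is $(s, 0, \ell)$ with $s = \ell = p$, which is $(p, 0, p)$; but $m = p+1$ and $k = p-1$, so neither $s = m$ nor $\ell = k$ holds, and since $s + \ell = 2p > 0$, the last clause of Theorem \ref{th:41} forces exponential growth — contradiction. (This is precisely where the hypothesis that not all zeros lie on the unit circle is used only to exclude the degenerate case $u = 2p$; for $p = 1$ the polynomial $\g_0 + \g_1 z + \g_0 z^2$ could have both zeros on the circle, and the statement for $p=1$ would need that excluded or handled trivially — I should double-check the $p=1$ bookkeeping, since there $m = 2$, $k = 0$.) Next, $u$ is even and $0 < u < 2p$ by the no-all-on-circle hypothesis, and I must rule out $u \ge 4$. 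If $u \ge 4$ then $s = \ell = p - u/2 \le p - 2$. The type $(p - u/2,\, u,\, p - u/2)$: for weak well-conditioning we'd need it to be of the form $(m_1, m_2, k)$ or $(m, k_1, k_2)$. But the third coordinate is $p - u/2 \le p - 2 < p - 1 = k$ and $p - u/2 \ne k$, so it is not of the first form; and the first coordinate is $p - u/2 < p + 1 = m$, so it is not of the second form. Hence by the final clause of Theorem \ref{th:41} (applicable since $s + \ell = 2p - u > 0$, which holds because $u < 2p$), $\kappa(\G_n^p)$ grows exponentially — contradiction. Therefore $u = 2$.

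The main obstacle I anticipate is not the palindrome argument itself, which is routine, but the careful matching of the computed type $(s, u, \ell)$ against the precise case list in Theorem \ref{th:41} while keeping track of the asymmetric values $m = p+1$ and $k = p-1$ — in particular making sure that in the ``only if'' direction every case other than $u = 2$ genuinely falls into the exponential-growth clause (verifying $s + \ell > 0$, $s \ne m$, and $\ell \ne k$ in each case) and that the boundary case $p = 1$ is consistent with the hypothesis. A secondary point to handle cleanly is that Theorem \ref{th:41} as stated gives *sufficient* conditions for (weak) well-conditioning and a *sufficient* condition for exponential growth, and one should confirm these cover all possibilities for the types that arise here, so that the ``only if'' implication is airtight.
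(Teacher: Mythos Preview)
Your proposal is correct and follows essentially the same approach as the paper: both arguments hinge on the self-reciprocal (palindromic) structure of $q^{\G^p}$ to deduce that the type is necessarily $(r,2p-2r,r)$, and then match this against the cases of Theorem~\ref{th:41} with $m=p+1$, $k=p-1$ to conclude that only $r=p-1$ (i.e.\ exactly two unit-modulus zeros) is compatible with weak well-conditioning. Your treatment is in fact more explicit than the paper's (which compresses both directions into a few lines and dismisses $p=1$ as ``analogous''); your flagged concerns about the $p=1$ bookkeeping and about Theorem~\ref{th:41} providing only sufficient conditions are legitimate points of care, but they resolve cleanly for the types $(r,2p-2r,r)$ that actually arise, so no genuine gap remains.
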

\begin{proof}
Assume $p>1$, if $p=1$ the proof is analogous. According to Theorem \ref{th:41}, the matrix family $\{\G_n^p\}_n$ exhibits weakly well-conditioned, or well-conditioned, behaviour if and only if $q^{\G^p}$ falls into one of the following types: $(r_1,p+1-r_1,p-1)$ for some $r_1 \in \{0,\ldots,p+1\}$ or $(p+1,p-1-r_2,r_2)$ for some $r_2 \in \{0,\ldots,p-1\}$. It is worth noting that if $\xi$ is a zero of $q^{\G^p}$, then $\xi^{-1}$ is also a zero. Hence, $q^{\G^p}$ necessarily is a polynomial of type $(r,2p-2r,r)$ for some $r \in \{1,\ldots,p\}$. Consequently, the only admissible choice for weakly well-conditioning is $q^{\G^p}$ of type $(p-1,2,p-1)$.
\end{proof}
We aim to investigate the distribution of zeros of the polynomial $q^{\G^p}$. This polynomial belongs to the category of self-reciprocal polynomials (refer to \cite{Lakatos2002}). In other words, $q^{\G^p}$ can be expressed as $q^{\G^p}(z) = \sum_{j=0}^{2p} \g_j z^j$, where $\g_j \in \R$ and it exhibits symmetry $\g_j = \g_{2p-j}$ for $j=0,\dots, p$. These polynomials can be represented as 
\begin{equation*}
	q^{\G^p}(z) = z^p \left[ \g_0\left(\frac{1}{z^p} + z^p\right) + \g_1 \left(\frac{1}{z^{p-1}} + z^{p-1} \right) + \ldots + \g_{p-1} \left( \frac{1}{z} + z\right) + \g_p \right].
\end{equation*}
Moreover, each term $z^{-j} + z^j$ can be expressed as a polynomial in the new variable $x(z) = z^{-1} + z$. Specifically, let $T_j$ denote the $j^{\thh}$ Chebyshev polynomial defined via the recurrence relation \cite[Section 1.2.1]{MasonHandscomb2003}
\begin{equation*}
    T_0(x) = 1, \quad T_1(x) = x, \quad T_{j+1}(x) = 2x T_j(x) - T_{j-1}(x), \quad \text{for } j \ge 1.
\end{equation*}
This recursive formula has the closed form
\begin{equation*}
	T_j(x) = \frac{(x + \sqrt{x^2-1})^j + (x - \sqrt{x^2-1})^j}{2}.
\end{equation*}
Therefore, by substituting $x(z) = z^{-1} + z$, we obtain $2T_j(x(z)/2) = z^{-j} + z^j$. Exploiting this property, we introduce the so-called Chebyshev transform $\mathcal{T} q^{\G^p} \in \mathbb{P}_p(\R)$ of the self-reciprocal polynomial $q^{\G^p}$ in \eqref{eq:45}, defined as
\begin{equation} \label{eq:46}
    \mathcal{T} q^{\G^p}(x) = 2 \g_0 T_p\left(\frac{x}{2}\right) + 2\g_1 T_{p-1}\left(\frac{x}{2}\right) + \ldots +2\g_{p-1} T_1\left(\frac{x}{2}\right) + \g_p T_0\left(\frac{x}{2}\right).
\end{equation}
By definition, we have $\mathcal{T} q^{\G^p}(x(z)) = q^{\G^p}(z)/z^p$ where $x(z) = z^{-1}+z$. Additionally, observe that the leading coefficient of $\mathcal{T} q^{\G^p}$ is $\g_0$ for $p \ge 1$. This follows from the well-known fact that the leading coefficient of $T_p(x)$ is $2^{p-1}$ for $p \ge 0$ (see \cite[Equation 1.14]{Rivlin1990}).

As noted in \cite[Theorem 3]{Vieira2019}, for every pair of roots $(\xi,\xi^{-1})$ of $q^{\G^p}(z)$ located on the boundary of unit circle, there exists a corresponding zero of the polynomial $\mathcal{T} q^{\G^p}(x)$ within the interval $[-2,2]$ on the real line. This leads us to the following conclusion.
\begin{lemma} \label{lem:48}
If $p>1$, assume that not all the zeros of $q^{\G^p}$ in \eqref{eq:45} are on the boundary of the unit circle. Then, the family of matrices $\{ \G_n^p \}_n$ as defined in \eqref{eq:44} exhibits weakly well-conditioned behaviour if and only if the Chebyshev transform $\mathcal{T} q^{\G^p}$ \eqref{eq:46} of the polynomial $q^{\G^p}$ has exactly one real root in the interval $[-2,2]$.
\end{lemma}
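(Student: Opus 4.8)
The plan is to chain together the two results already in hand, namely Lemma~\ref{lem:47} and the Chebyshev-transform correspondence from \cite[Theorem 3]{Vieira2019}. By Lemma~\ref{lem:47}, under the standing assumption that not all zeros of $q^{\G^p}$ lie on the unit circle, the family $\{\G_n^p\}_n$ is weakly well-conditioned if and only if $q^{\G^p}$ has exactly two zeros of unit modulus. So it suffices to prove the equivalence: $q^{\G^p}$ has exactly two zeros of unit modulus if and only if $\mathcal{T}q^{\G^p}$ has exactly one real root in $[-2,2]$. This is a purely root-counting statement, and everything hinges on understanding the map $z \mapsto x(z) = z^{-1} + z$ as a $2$-to-$1$ covering.

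First I would record the key geometric fact: $x(z) = z^{-1}+z$ maps the unit circle $\{|z|=1\}$ onto the real segment $[-2,2]$, and for $z = e^{\mathrm{i}\theta}$ one has $x = 2\cos\theta \in [-2,2]$, with the two preimages $z, \bar z = z^{-1}$ of a given $x \in (-2,2)$ being distinct, while $x = \pm 2$ has the single (double) preimage $z = \pm 1$. Conversely, $x(z) \in [-2,2]$ forces $|z| = 1$: writing $z = re^{\mathrm{i}\theta}$, the imaginary part of $z^{-1}+z$ is $(r - r^{-1})\sin\theta$, which vanishes only if $r = 1$ or $\sin\theta = 0$; in the latter case $z$ is real and $z + z^{-1} \in [-2,2]$ again forces $|z|=1$. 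Using the identity $\mathcal{T}q^{\G^p}(x(z)) = q^{\G^p}(z)/z^p$ from the paragraph preceding the lemma, a real number $x_0 \in [-2,2]$ is a root of $\mathcal{T}q^{\G^p}$ if and only if both preimages of $x_0$ (counted as one point when $x_0 = \pm 2$) are zeros of $q^{\G^p}$ on the unit circle. Since $q^{\G^p}$ is self-reciprocal, its unit-modulus zeros automatically come in conjugate/reciprocal pairs $\{\xi, \xi^{-1}\}$ (with $\xi = \pm 1$ being its own partner), so the unit-modulus zeros of $q^{\G^p}$ are in bijection, respecting multiplicity appropriately, with the roots of $\mathcal{T}q^{\G^p}$ in $[-2,2]$, where each root $x_0 \in (-2,2)$ accounts for two unit-modulus zeros of $q^{\G^p}$ and each root at $x_0 = \pm 2$ accounts for the single zero $z = \pm 1$ (with matching multiplicity, since the local degree of the covering at $z=\pm1$ is $2$).

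Granting that counting correspondence, I would argue as follows. If $q^{\G^p}$ has exactly two zeros of unit modulus, these either form a genuine pair $\{\xi,\xi^{-1}\}$ with $\xi \notin \{\pm 1\}$, which corresponds to exactly one root of $\mathcal{T}q^{\G^p}$ in the open interval $(-2,2)$, or they are a double zero at $z = 1$ or $z = -1$, which corresponds to exactly one root of $\mathcal{T}q^{\G^p}$ at $x = 2$ or $x = -2$; either way $\mathcal{T}q^{\G^p}$ has exactly one real root in $[-2,2]$. (The possibility of two simple zeros at $z = 1$ and $z = -1$ simultaneously is excluded because self-reciprocity would then be consistent, but that is two zeros giving two roots $x = \pm 2$ — I should double-check this edge case and note that "exactly two zeros of unit modulus" at $\pm1$ as a conjugate-closed set with the right multiplicity bookkeeping still yields the stated count; in fact two simple zeros at $+1$ and $-1$ do give two roots of $\mathcal{T}q^{\G^p}$, so this configuration is correctly excluded on both sides.) Conversely, if $\mathcal{T}q^{\G^p}$ has exactly one real root $x_0$ in $[-2,2]$, then pulling back through $x(z)$ produces exactly two unit-modulus zeros of $q^{\G^p}$ when $x_0 \in (-2,2)$, and exactly one double unit-modulus zero when $x_0 = \pm 2$ — in both cases "exactly two zeros of unit modulus" counted with multiplicity. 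Combining with Lemma~\ref{lem:47} gives the claim.

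The main obstacle is the careful multiplicity bookkeeping at the ramification points $z = \pm 1$, where $x(z)$ is $2$-to-$1$ infinitesimally: a root of $\mathcal{T}q^{\G^p}$ of multiplicity $r$ at $x = \pm 2$ corresponds to a zero of $q^{\G^p}$ of multiplicity exactly $2r$ at $z = \pm 1$ (one factor $2$ from the ramification), and one must make sure the phrase "exactly two zeros of unit modulus" is interpreted consistently with multiplicity on both sides of the equivalence, so that, e.g., a single double zero at $z = 1$ is counted as two. A clean way to handle this is to factor $q^{\G^p}(z) = \g_0 \prod (z - \xi_i)$ over $\mathbb{C}$, pair up the reciprocal roots, and track how each pair (or self-paired root $\pm 1$) contributes to $\mathcal{T}q^{\G^p}$; the derivative computation $\frac{dx}{dz} = 1 - z^{-2}$, vanishing exactly at $z = \pm 1$, makes the ramification count explicit. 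I would also remark that the hypothesis "not all zeros on the unit circle" (for $p>1$) is exactly what rules out the type $(0,2p,0)$ case and lets Lemma~\ref{lem:47} apply, so the same caveat is carried over verbatim.
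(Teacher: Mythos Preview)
Your proposal is correct and follows the same approach as the paper: the paper simply records the Vieira correspondence between unit-modulus roots of $q^{\G^p}$ and roots of $\mathcal{T}q^{\G^p}$ in $[-2,2]$ and then states Lemma~\ref{lem:48} as an immediate consequence of this together with Lemma~\ref{lem:47}, without writing out a separate proof. Your version supplies the details the paper omits (the explicit description of $z\mapsto z^{-1}+z$ as a $2$-to-$1$ map, the ramification at $z=\pm1$, and the multiplicity bookkeeping), which is more careful than necessary but not different in spirit.
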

Let $\{x_1,\ldots,x_p\}$ be the roots of $\mathcal{T} q^{\G^p}$. A necessary condition such that only one root lies in the interval $[-2,2]$ reads as
\begin{equation} \label{eq:47}
   \prod_{j=1}^p (x_j^2-4) \le 0.
\end{equation}
This condition becomes evident when all the roots are real, and even in the case of a pair of complex conjugate roots $x$ and $\bar x$, \eqref{eq:47} holds true since
\begin{equation*}
    (x^2-4)(\bar x^2-4) = (\text{Re}(x^2)-4)^2 + \text{Im}(x^2)^2 \ge 0.
\end{equation*}
In the following, we obtain a simple criterion for evaluating the product in \eqref{eq:47}.
\begin{lemma} \label{lem:49}
Consider $q^{\G^p}(z) = \g_0 z^{2p} + \g_1 z^{2p-1} + \ldots + \g_1 z + \g_0 $, a self-reciprocal polynomial of degree $2p$. Let $\{x_1,\ldots,x_p\}$ denote the roots of its Chebyshev transform $\mathcal{T} q^{\G^p}$. Assuming $\g_0 \ne 0$, the following formulas are valid
\begin{align*}
   \prod_{j=1}^p (x_j-2) = (-1)^p \, \frac{q^{\G^p}(1)}{\g_0}, \quad \quad \quad \prod_{j=1}^p (x_j+2) = \frac{q^{\G^p}(-1)}{\g_0}.
\end{align*}
\end{lemma}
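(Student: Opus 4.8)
The plan is to exploit the identity $\mathcal{T} q^{\G^p}(x(z)) = q^{\G^p}(z)/z^p$ connecting the Chebyshev transform to the original self-reciprocal polynomial, together with the special values $z = 1$ and $z = -1$. Observe that $x(1) = 1^{-1} + 1 = 2$ and $x(-1) = (-1)^{-1} + (-1) = -2$. Therefore, evaluating the identity at $z = 1$ gives $\mathcal{T} q^{\G^p}(2) = q^{\G^p}(1)/1^p = q^{\G^p}(1)$, and at $z = -1$ gives $\mathcal{T} q^{\G^p}(-2) = q^{\G^p}(-1)/(-1)^p = (-1)^p q^{\G^p}(-1)$.

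Next, since $\mathcal{T} q^{\G^p} \in \mathbb{P}_p(\R)$ has leading coefficient $\g_0 \ne 0$ (as recalled just before the lemma), it factors as
\begin{equation*}
    \mathcal{T} q^{\G^p}(x) = \g_0 \prod_{j=1}^p (x - x_j).
\end{equation*}
Substituting $x = 2$ yields $\mathcal{T} q^{\G^p}(2) = \g_0 \prod_{j=1}^p (2 - x_j) = \g_0 (-1)^p \prod_{j=1}^p (x_j - 2)$, and substituting $x = -2$ yields $\mathcal{T} q^{\G^p}(-2) = \g_0 \prod_{j=1}^p (-2 - x_j) = (-1)^p \g_0 \prod_{j=1}^p (x_j + 2)$, i.e.\ $\mathcal{T} q^{\G^p}(-2) = \g_0 \prod_{j=1}^p (x_j + 2) \cdot (-1)^p$ — more directly, $\mathcal{T} q^{\G^p}(-2) = \g_0 \prod_{j=1}^p (-2-x_j)$, and I will match signs carefully at the end.

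Combining the two evaluations: from $\mathcal{T} q^{\G^p}(2) = q^{\G^p}(1)$ we get $\g_0 (-1)^p \prod_{j=1}^p (x_j - 2) = q^{\G^p}(1)$, hence $\prod_{j=1}^p (x_j - 2) = (-1)^p q^{\G^p}(1)/\g_0$, which is the first formula. From $\mathcal{T} q^{\G^p}(-2) = (-1)^p q^{\G^p}(-1)$ and $\mathcal{T} q^{\G^p}(-2) = \g_0 \prod_{j=1}^p(-2-x_j) = (-1)^p \g_0 \prod_{j=1}^p (x_j+2)$, cancelling $(-1)^p$ gives $\g_0 \prod_{j=1}^p (x_j + 2) = q^{\G^p}(-1)$, i.e.\ $\prod_{j=1}^p (x_j + 2) = q^{\G^p}(-1)/\g_0$, the second formula.

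There is no real obstacle here; the only thing to be careful about is the bookkeeping of the factor $(-1)^p$ arising from $z^{-p}$ at $z=-1$ and from reversing $(2-x_j)$ versus $(x_j-2)$, and these conveniently cancel in one case and combine cleanly in the other. I would write the proof essentially as the two displayed chains of equalities above.

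\begin{proof}
Recall from the identity defining the Chebyshev transform that $\mathcal{T} q^{\G^p}(x(z)) = q^{\G^p}(z)/z^p$ with $x(z) = z^{-1} + z$, and that $\mathcal{T} q^{\G^p} \in \mathbb{P}_p(\R)$ has leading coefficient $\g_0$. Hence
\begin{equation*}
    \mathcal{T} q^{\G^p}(x) = \g_0 \prod_{j=1}^p (x - x_j).
\end{equation*}
Since $x(1) = 2$, evaluating the identity at $z = 1$ gives
\begin{equation*}
    \g_0 \prod_{j=1}^p (2 - x_j) = \mathcal{T} q^{\G^p}(2) = \frac{q^{\G^p}(1)}{1^p} = q^{\G^p}(1).
\end{equation*}
Writing $\prod_{j=1}^p (2 - x_j) = (-1)^p \prod_{j=1}^p (x_j - 2)$ and dividing by $\g_0 \ne 0$ yields the first formula,
\begin{equation*}
    \prod_{j=1}^p (x_j - 2) = (-1)^p \frac{q^{\G^p}(1)}{\g_0}.
\end{equation*}
Similarly, since $x(-1) = -2$, evaluating the identity at $z = -1$ gives
\begin{equation*}
    \g_0 \prod_{j=1}^p (-2 - x_j) = \mathcal{T} q^{\G^p}(-2) = \frac{q^{\G^p}(-1)}{(-1)^p} = (-1)^p q^{\G^p}(-1).
\end{equation*}
Writing $\prod_{j=1}^p (-2 - x_j) = (-1)^p \prod_{j=1}^p (x_j + 2)$, cancelling the common factor $(-1)^p$, and dividing by $\g_0$ yields the second formula,
\begin{equation*}
    \prod_{j=1}^p (x_j + 2) = \frac{q^{\G^p}(-1)}{\g_0}.
\end{equation*}
\end{proof}
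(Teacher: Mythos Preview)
Your proof is correct and follows essentially the same approach as the paper: both evaluate $\mathcal{T} q^{\G^p}$ at $x=\pm 2$ via the identity $\mathcal{T} q^{\G^p}(x(z)) = q^{\G^p}(z)/z^p$ with $z=\pm 1$, and then read off the product of shifted roots. The paper phrases the last step through Vieta's formulas applied to the shifted polynomial $f(x)=\mathcal{T} q^{\G^p}(x+2)$, whereas you use the factorization $\mathcal{T} q^{\G^p}(x)=\g_0\prod_j(x-x_j)$ directly; these are equivalent.
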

\begin{proof}
To prove the first formula, let us consider $f(x) = \mathcal{T} q^{\G^p}(x+2)$. For all roots $x_j$ of $\mathcal{T} q^{\G^p}$, we have $f(x_j-2)=\mathcal{T} q^{\G^p}(x_j)=0$. Therefore, recalling Vieta's formulas the product $(x_1-2)\dots(x_p-2)$ is equal to $(-1)^p f(0)/\g_0$. We can compute $f(0) = \mathcal{T} q^{\G^p}(2) = q^{\G^p}(1)$, the latter because $x(z) = z^{-1} + z = 2$ if and only if $z=1$. For the second formula, the proof follows a similar reasoning.
\end{proof}
We conclude collecting the results from the previous lemmas.
\begin{proposition} \label{prop:410}
If the family of matrices $\{ \G_n^p \}_n$ in \eqref{eq:44} is weakly well-con-\\ ditioned, and if $p>1$ not all zeros of the associated polynomial $q^{\G^p}$, as defined in \eqref{eq:45}, are on the boundary of the unit circle, then $q^{\G^p}$ satisfies the following condition
\begin{equation} \label{eq:48}
	(-1)^p q^{\G^p}(-1) q^{\G^p}(1) \le 0.
\end{equation}
\end{proposition}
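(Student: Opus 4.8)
The plan is to chain together the three lemmas already established. By hypothesis the family $\{\G_n^p\}_n$ is weakly well-conditioned, so Lemma \ref{lem:48} applies and tells us that the Chebyshev transform $\mathcal{T} q^{\G^p}$ has exactly one real root in $[-2,2]$. (For $p=1$ the transform is linear, $\mathcal{T} q^{\G^1}(x) = \g_0 x + \g_1$, with its single root $-\g_1/\g_0$; the claim \eqref{eq:48} then reads $-q^{\G^1}(-1)q^{\G^1}(1) = -(\g_0-\g_1)(\g_0+\g_1) = \g_1^2 - \g_0^2 \le 0$, which is exactly the statement that $|{-\g_1/\g_0}| \le 1$, so I would dispatch this case first and separately.)

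For $p>1$, write $\{x_1,\dots,x_p\}$ for the roots of $\mathcal{T} q^{\G^p}$ (with multiplicity). The key observation is that the condition ``exactly one root in $[-2,2]$'' forces the product $\prod_{j=1}^p (x_j^2 - 4)$ to be nonpositive: this is precisely the necessary condition \eqref{eq:47}, whose justification is already recorded in the excerpt --- real roots outside $[-2,2]$ contribute positive factors, a real root inside contributes a negative factor, and each conjugate pair of nonreal roots contributes a nonnegative factor $(\mathrm{Re}(x^2)-4)^2 + \mathrm{Im}(x^2)^2$. So an odd number (here, one) of factors being negative makes the whole product $\le 0$. Then I factor $x_j^2 - 4 = (x_j-2)(x_j+2)$ and split the product:
\begin{equation*}
    \prod_{j=1}^p (x_j^2-4) = \left(\prod_{j=1}^p (x_j-2)\right)\left(\prod_{j=1}^p (x_j+2)\right).
\end{equation*}
Now I invoke Lemma \ref{lem:49}, which evaluates the two factors as $(-1)^p q^{\G^p}(1)/\g_0$ and $q^{\G^p}(-1)/\g_0$ respectively. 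Substituting,
\begin{equation*}
    \prod_{j=1}^p (x_j^2-4) = (-1)^p \frac{q^{\G^p}(1)\, q^{\G^p}(-1)}{\g_0^2} \le 0,
\end{equation*}
and since $\g_0^2 > 0$ this is equivalent to $(-1)^p q^{\G^p}(-1) q^{\G^p}(1) \le 0$, which is \eqref{eq:48}.

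There is essentially no obstacle here: the proposition is a bookkeeping corollary and every ingredient --- the characterization of weak well-conditioning via the Chebyshev transform (Lemma \ref{lem:48}), the sign analysis giving \eqref{eq:47}, and the Vieta-type evaluation of $\prod(x_j \pm 2)$ (Lemma \ref{lem:49}) --- is already in hand. The only points requiring a little care are handling $p=1$ (where ``not all zeros on the unit circle'' is vacuous or needs the linear computation above) and being explicit that roots are counted with multiplicity so that the parity argument for the sign of the product is legitimate. I would therefore keep the proof short, just stringing the cited results together in the order above.
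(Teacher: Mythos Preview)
Your proposal is correct and follows exactly the paper's route, which simply cites Lemmas~\ref{lem:48} and~\ref{lem:49} and the sign observation~\eqref{eq:47}. One small arithmetic slip in your $p=1$ aside: $q^{\G^1}(\pm 1) = 2\g_0 \pm \g_1$ (not $\g_0 \pm \g_1$) and the root condition from Lemma~\ref{lem:48} is $|{-\g_1/\g_0}| \le 2$ (not $\le 1$); in fact the separate treatment is unnecessary, since your general argument via~\eqref{eq:47} and Lemma~\ref{lem:49} already covers $p=1$.
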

\begin{proof}
Result follows from Lemmas \ref{lem:48} and \ref{lem:49}.
\end{proof}
In general, \eqref{eq:48} is clearly not sufficient for weakly well-conditioning. Indeed, if $q^{\G^p}(1)=0$, then this condition is always satisfied. In the next section, we will explicitly calculate the quantities $q^{\G^p}(\pm 1)$ associated to the matrices \eqref{eq:310}, and we will show that \eqref{eq:48} is also sufficient for the weakly well-conditioning in this particular case.

\section{Conditioning of matrices defined by maximal regularity splines} \label{sec:5}

In this section, we present and prove the key results of this paper. We analyze the polynomial $q^{\K^p(\rho,\delta)}$ associated with the Toeplitz band parts of the family of matrices $\{\K_n^p(\rho,\delta)\}_n$ defined in \eqref{eq:310}. We investigate the conditions on $\rho$ and $\delta$ necessary for \eqref{eq:48} to hold true (Section \ref{sec:51}). Furthermore, we establish that, within this framework, this condition not only proves necessary for the weakly well-conditioning of the family $\{\K_n^p(\rho,\delta)\}_n$, but is also sufficient (Section \ref{sec:52}).

\subsection{Necessary condition for the weakly well-conditioning of $\{\K_n^p(\rho,\delta)\}_n$} \label{sec:51}
Recalling the explicit formula \eqref{eq:311}, the polynomial $q^{\K^p(\rho,\delta)}$ is given by
\begin{equation*}
    \begin{aligned}
	   q^{\K^p(\rho,\delta)}(z) & = k_0^p(\rho,\delta) + k_1^p(\rho,\delta) z + \ldots + k_p^p(\rho,\delta) z^p  + \ldots +k_1^p (\rho,\delta) z^{2p-1} + k_0^p   (\rho,\delta) z^{2p}
        \\ &  = z^p \left(k_p^p(\rho,\delta) + \sum_{j=1}^{p} k_{p-j}^p(\rho,\delta) (z^{-j} + z^j)\right)
    \end{aligned}
\end{equation*}
where the coefficients are determined in terms of the cardinal spline $\Phi_{2p+1}$ as 
\begin{equation*}
\begin{aligned}
	k_{p-j}^p(\rho,\delta) = \partial_s^2 \Phi_{2p+1}(p+1-j)  + \rho \bigl(\Phi_{2p+1}(p+1-j) 
	 + \delta (-1)^p \partial_s^{2p} \Phi_{2p+1}(p+1-j) \bigr)
	\end{aligned}
\end{equation*}
for $j = 0, \ldots, p$.

To investigate the number of zeros of $q^{\K^p(\rho,\delta)}$ on the boundary of the unit circle, we express $q^{\K^p(\rho,\delta)}(z)$ explicitly for $z = e^{\mi\theta}$ and $\theta \in [-\pi,\pi]$.
\begin{proposition} \label{prop:51}
For $p \ge 1$ and $\theta \in [-\pi,\pi]$, let the functions $M_p, B_p, D_p : [-\pi,\pi] \to \R$ be defined as
\begin{align*}
	M_p(\theta)  = (2-2\cos \theta)^{p+1} \sum_{j \in \Z} \frac{1}{(\theta + 2j\pi)^{2p+2}}, & \quad 
 B_p(\theta)  = -(2-2\cos \theta)^{p+1} \sum_{j \in \Z} \frac{1}{(\theta + 2j\pi)^{2p}},
	\\  & \hspace{-1.5cm} D_p(\theta) = (-1)^p (2-2\cos \theta)^p.
\end{align*}
Then, the following relations hold true 
\begin{align}
    \Phi_{2p+1}(p+1) + \sum_{j=1}^{p} \Phi_{2p+1}(p+1-j) (e^{-\mi j \theta}+e^{\mi j \theta}) & = M_p(\theta), \label{eq:51}
	\\ \partial_s^2 \Phi_{2p+1}(p+1) + \sum_{j=1}^{p}  \partial_s^2 \Phi_{2p+1}(p+1-j) (e^{-\mi j \theta} + e^{\mi j \theta}) & = B_p(\theta),
	\\ \partial_s^{2p} \Phi_{2p+1}(p+1) + \sum_{j=1}^{p} \partial_s^{2p} \Phi_{2p+1}(p+1-j) (e^{-\mi j \theta} + e^{\mi j \theta})  & = D_p(\theta).
\end{align}
Therefore, we can express $q^{\K^p(\rho,\delta)}(e^{\mi \theta})$ as
\begin{equation} \label{eq:54}
	q^{\K^p(\rho,\delta)}(e^{\mi \theta}) = e^{\mi p \theta}\left( B_p(\theta) + \rho\left(M_p(\theta) + \delta (-1)^p D_p(\theta) \right) \right).
\end{equation}
\end{proposition}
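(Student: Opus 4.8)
The plan is to prove the three Fourier-series identities \eqref{eq:51}--\eqref{eq:53} and then assemble \eqref{eq:54} by linearity. The key tool is the classical Poisson-summation / Fourier-transform representation of the cardinal B-spline $\Phi_{2p+1}$: its Fourier transform is $\widehat{\Phi}_{2p+1}(\omega) = e^{-\mi(p+1)\omega}\bigl(\tfrac{\sin(\omega/2)}{\omega/2}\bigr)^{2p+2}$, so that, shifting to centre the spline, the symbol of the mass matrix is the well-known expression
\begin{equation*}
\sum_{k\in\Z}\Phi_{2p+1}(p+1+k)\,e^{-\mi k\theta} \;=\; \bigl(2-2\cos\theta\bigr)^{p+1}\sum_{j\in\Z}\frac{1}{(\theta+2j\pi)^{2p+2}},
\end{equation*}
which is exactly $M_p(\theta)$. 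I would either invoke this directly from the cardinal-spline literature (e.g. \cite{Chui1992}, or the symbol computations in \cite{GaroniManniPelosiSerraCapizzanoSpeleers2014}) or reprove it in two lines via Poisson summation applied to $\Phi_{2p+1}$ against the exponential, using that $\Phi_{2p+1}$ is supported on $[0,2p+2]$ so the bi-infinite sum over $k$ has only finitely many nonzero terms, namely $k=-p,\dots,p$, matching the left-hand side of \eqref{eq:51} after using the symmetry $\Phi_{2p+1}(p+1-j)=\Phi_{2p+1}(p+1+j)$ from \eqref{eq:38}.

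For the second and third identities the observation is that differentiation of $\Phi_{2p+1}$ multiplies its Fourier transform by $(\mi\omega)$, equivalently the finite-difference/symbol identity $\partial_s^{2}\Phi_{2p+1}$ corresponds to multiplying the symbol by $-\omega^2$ inside the Poisson sum; this immediately produces the factor $(\theta+2j\pi)^{-2p}$ in $B_p$ (one power of $(\theta+2j\pi)^2$ cancelled), with the overall minus sign coming from the $(\mi)^2$. For the third, $\partial_s^{2p}\Phi_{2p+1}$ kills $2p$ powers, leaving $\sum_{j}(\theta+2j\pi)^{2}\cdot(2-2\cos\theta)^{p+1}/(\theta+2j\pi)^{2p+2}$; here one uses the elementary identity $\sum_{j\in\Z}(\theta+2j\pi)^{-2}=\tfrac14\csc^2(\theta/2)=1/(2-2\cos\theta)$, so everything telescopes down to $(-1)^p(2-2\cos\theta)^{p}=D_p(\theta)$, the sign $(-1)^p$ again from $(\mi)^{2p}$. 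An alternative, fully self-contained route for \eqref{eq:53} is to recognise the left-hand side as the symbol of the stiffness-type matrix $\D_h^p$ scaled appropriately; since $\partial_s^p\Phi_p$ is (up to sign) the $p$-th order backward difference of $\Phi_0$, the entries $\partial_s^{2p}\Phi_{2p+1}(p+1-j)$ are exactly the centred $(2p)$-th difference coefficients, whose generating function is $(-1)^p(2-2\cos\theta)^p$ by the binomial theorem — this avoids series manipulations entirely and is probably the cleanest presentation.

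Once the three identities are established, \eqref{eq:54} is pure bookkeeping: factor $z^p=e^{\mi p\theta}$ out of $q^{\K^p(\rho,\delta)}(e^{\mi\theta})$ as displayed just above the proposition, substitute the coefficient formula $k_{p-j}^p(\rho,\delta)=\partial_s^2\Phi_{2p+1}(p+1-j)+\rho(\Phi_{2p+1}(p+1-j)+\delta(-1)^p\partial_s^{2p}\Phi_{2p+1}(p+1-j))$, and distribute the sum over $j$ into the three groups recognised as $B_p(\theta)$, $\rho M_p(\theta)$, and $\rho\delta(-1)^pD_p(\theta)$ respectively. The main obstacle, such as it is, is purely the justification of the Poisson-summation step and the cancellation identity $\sum_{j}(\theta+2j\pi)^{-2}=1/(2-2\cos\theta)$ (and, if one wants \eqref{eq:53} directly, that all the higher-power series actually collapse to a finite trigonometric polynomial rather than merely an infinite series); the cleanest write-up sidesteps most of this by quoting \cite[Lemma 3 and Lemma 4]{GaroniManniPelosiSerraCapizzanoSpeleers2014} together with the standard symbol of cardinal B-splines, leaving only the elementary algebra of collecting terms.
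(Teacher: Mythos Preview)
Your proposal is correct. The derivation of $M_p$ via Poisson summation is essentially the paper's own argument (the paper phrases it through the autocorrelation identity $\Phi_{2p+1}(p+1-j)=\int_\R\Phi_p(s)\Phi_p(s+j)\,ds$ and then $\sum_j|\widehat\Phi_p(\theta+2j\pi)|^2$, but this is the same computation). For $B_p$ and $D_p$, however, you take a genuinely different route. The paper does \emph{not} work on the Fourier side: it uses the derivative recurrence $\partial_s^k\Phi_q(s)=\partial_s^{k-1}\Phi_{q-1}(s)-\partial_s^{k-1}\Phi_{q-1}(s-1)$ together with the trigonometric identity $e^{\mi j\theta}-2e^{\mi(j+1)\theta}+e^{\mi(j+2)\theta}=-(2-2\cos\theta)e^{\mi(j+1)\theta}$ to peel off a factor $-(2-2\cos\theta)$ and reduce the $\partial_s^2\Phi_{2p+1}$ sum to the $\Phi_{2p-1}$ sum, i.e.\ to $M_{p-1}$; iterating this $p$ times for $D_p$ terminates at the finite sum $\sum_{j}\Phi_1(1-j)e^{\mi j\theta}=1$. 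Your Fourier-multiplier argument (differentiation $\leftrightarrow (\mi\omega)^{2k}$) is arguably more uniform across the three identities, but it needs the extra ingredient $\sum_{j\in\Z}(\theta+2j\pi)^{-2}=1/(2-2\cos\theta)$ to collapse the $D_p$ case, whereas the paper's inductive reduction lands on a finite sum and never invokes this Mittag--Leffler identity. Your alternative finite-difference route for $D_p$ is essentially the endpoint of the paper's recursion, seen from the other side. One small typo: in your $D_p$ display the numerator should read $(\theta+2j\pi)^{2p}$, not $(\theta+2j\pi)^{2}$, so that the quotient is $(\theta+2j\pi)^{-2}$ as you use on the next line.
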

\begin{proof}
There is already a proof for the explicit expressions of $M_p(\theta)$ and $B_p(\theta)$ in \cite[Lemmas 7 and 6]{GaroniManniPelosiSerraCapizzanoSpeleers2014}. For the sake of completeness, we report the details for $M_p(\theta)$, and present a different proof for $B_p(\theta)$, which is useful for calculating $D_p(\theta)$.

We recall that the Fourier transform of a cardinal spline $\Phi_p$ is
\begin{equation*}
	\widehat \Phi_p(\xi) = \left(\frac{1-e^{-\mi \xi}}{\mi \xi} \right)^{p+1}, \quad \left| \widehat \Phi_p(\xi) \right|^2 = \left(\frac{2-2\cos \xi}{\xi^2} \right)^{p+1}, \quad \quad \xi \in \R
\end{equation*}
(see \cite[Example 3.4]{Chui1992}), and satisfies the hypothesis of a Poisson summation formula \cite[Theorem 2.28]{Chui1992}, i.e., for all $\theta \in [-\pi,\pi]$
\begin{equation*}
	\sum_{j \in \Z} \left(\int_{\R} \Phi_p(s) \Phi_p(s+j) \dd s\right) e^{\mi j \theta}  = \sum_{j \in \Z} \left| \widehat{\Phi}_p(\theta+2j\pi) \right|^2.
\end{equation*}
To show \eqref{eq:51}, recalling \eqref{eq:37} and \eqref{eq:38}, we compute for $\theta \in [-\pi,\pi]$, 
\begin{align*}
	\Phi_{2p+1}(p+1) & + \sum_{j=1}^{p}\Phi_{2p+1}(p+1-j) (e^{-\mi j \theta}+e^{\mi j \theta}) = \sum_{j \in \Z} \Phi_{2p+1}(p+1-j) e^{\mi j \theta}
    \\ & = \sum_{j \in \Z} \left(\int_{\R} \Phi_p(s) \Phi_p(s+j) \dd s\right) e^{\mi j \theta} = \sum_{j \in \Z} \left| \widehat{\Phi}_p\left(\theta+2j\pi\right) \right|^2
    \\ & = (2-2\cos \theta)^{p+1} \sum_{j \in \Z} \frac{1}{(\theta+2j\pi)^{2(p+1)}}.
\end{align*}
Using the recurrence relation for derivatives of cardinal splines \cite[Theorem 4.3]{Chui1992} 
\begin{equation*}
	\partial_s^k \Phi_p(s) = \partial_s^{k-1} \Phi_{p-1}(s) - \partial_s^{k-1} \Phi_{p-1}(s-1),
\end{equation*}
and considering that $e^{\mi j \theta} - 2e^{\mi (j+1) \theta} + e^{\mi (j+2) \theta} = e^{\mi j \theta} (1-e^{\mi \theta})^2 = -e^{\mi (j+1) \theta}(2-2\cos \theta)$, we can compute
\begin{align*}
	 \partial_s^2 \Phi_{2p+1}(p+1) & + \sum_{j=1}^{p} \partial_s^2 \Phi_{2p+1}(p+1-j) (e^{-\mi j \theta}+e^{\mi j \theta}) 
  \\ &= \sum_{j \in \Z} \partial_s^2 \Phi_{2p+1}(p+1-j) e^{\mi j \theta}
    \\ & = \sum_{j \in \Z} \left(\partial_s \Phi_{2p}(p+1-j)-\partial_s \Phi_{2p}(p-j)\right) e^{\mi j \theta}
	 \\ & = \sum_{j \in \Z} \left(\Phi_{2p-1}(p+1-j)-2\Phi_{2p-1}(p-j)+\Phi_{2p-1}(p-j-1)\right)  e^{\mi j \theta}
	 \\ & = -(2-2\cos \theta)\sum_{j \in \Z} \Phi_{2p-1}(p+1-j) e^{\mi (j+1) \theta} 
     \\ & = -(2-2\cos \theta)\sum_{j \in \Z} \Phi_{2p-1}(p-j) e^{\mi j \theta}
     \\ & = -(2-2\cos \theta)^{p+1} \sum_{j \in \Z} \frac{1}{(\theta + 2j\pi)^{2p}},
\end{align*}
where in the last equality we used the same procedures as in the proof for $M_p(\theta)$. Similarly, we can iteratively make the same steps as before to show that 
\begin{align*}
	\partial_s^{2p} \Phi_{2p+1}(p+1) + \sum_{j=1}^{p} \partial_s^{2p} \Phi_{2p+1}(p+1-j) (e^{-\mi j \theta}+e^{\mi j \theta})
	& = (-1)^p (2-2\cos \theta)^p \sum_{j \in \Z} \Phi_1(1-j)e^{\mi j \theta} 
	\\ & = (-1)^p (2-2\cos \theta)^{p}.
\end{align*}
\end{proof}
\begin{remark}
For $p \ge 1$, we have obtained $B_p(\theta) = -(2-2\cos \theta) M_{p-1}(\theta)$, where we set $M_0(\theta) \equiv 1$.
\end{remark}
\begin{remark}
The statement of Proposition \ref{prop:51} exhibits a close connection to \cite[Lemma 6]{GaroniManniPelosiSerraCapizzanoSpeleers2014} and \cite[Lemma 7]{GaroniManniPelosiSerraCapizzanoSpeleers2014}, where the authors investigate the symbols of matrices associated with IgA discretizations. Specifically, the $p^{\thh}$ symbol is $e_p(\theta) = f_p(\theta) / g_p(\theta)$, as detailed in, for example, \cite[Equation 15]{EkstromFurciGaroniManniSerraCapizzano2018}, where $f_p(\theta) = -B_p(\theta)$ and $g_p(\theta) = M_p(\theta)$. For an extensive discussion on how symbols are used to derive analytical spectral properties for IgA discretization of boundary value problems, see \cite{GaroniSpeleersEkstromRealiSerraCapizzanoHughes2019} and the references provided therein.
\end{remark}
Now, we determine $q^{\K^p(\rho,\delta)}(\pm 1)$ by computing $M_p(0), B_p(0)$ and $D_p(0)$, and $M_p(\pi), B_p(\pi)$ and $D_p(\pi)$.
\begin{corollary} \label{cor:54}
For $p \ge 1$, let $M_p, B_p, D_p$ be the functions defined in Proposition \ref{prop:51}. It holds true that $M_p(0)=1$, and $B_p(0) = D_p(0) = 0$. Furthermore, we have
\begin{align*}
	M_p(\pi) = \frac{2}{\pi^{2(p+1)}} (2^{2(p+1)}-1)  \zeta(2(p+1)), & \quad  \quad 
	B_p(\pi) = -2^2M_{p-1}(\pi) = -\frac{2^3}{\pi^{2p}} (2^{2p}-1) \zeta(2p),
	\\ & \hspace{-1cm} D_p(\pi) = (-1)^p 2^{2p},
\end{align*}
where $\zeta$ is the Riemann zeta function. Therefore, we have $q^{\K^p(\rho,\delta)}(1) = \rho$ and
\begin{equation*}
\begin{aligned}
	q^{\K^p(\rho,\delta)}(-1) =  (-1)^p \Biggl( -\frac{2^3}{\pi^{2p}}(2^{2p}-1) \zeta(2p)
	 + \rho \left(\frac{2}{\pi^{2(p+1)}}(2^{2(p+1)}-1) \zeta\left(2(p+1)\right) + \delta 2^{2p}\right)\Biggr).
\end{aligned}	
\end{equation*}
\end{corollary}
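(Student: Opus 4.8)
The plan is to evaluate the three functions $M_p$, $B_p$, $D_p$ of Proposition \ref{prop:51} at $\theta=0$ and $\theta=\pi$, and then substitute these values into \eqref{eq:54} at $z=e^{\mi\theta}$ with $e^{\mi\cdot 0}=1$ and $e^{\mi\cdot\pi}=-1$, remembering that $q^{\K^p(\rho,\delta)}(e^{\mi p\theta})$ carries the extra factor $e^{\mi p\theta}$, which is $1$ at $\theta=0$ and $(-1)^p$ at $\theta=\pi$.

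First I would handle $\theta=0$. Here $2-2\cos\theta$ vanishes, so both $B_p(0)$ and $D_p(0)$ are immediate since they contain a strictly positive power of $(2-2\cos\theta)$ (note $D_p$ has the factor $(2-2\cos\theta)^p$ with $p\ge 1$, and $B_p$ the factor $(2-2\cos\theta)^{p+1}$). For $M_p(0)$ one has the indeterminate form $0^{p+1}\cdot\infty$: I would isolate the $j=0$ term of the series, which contributes $(2-2\cos\theta)^{p+1}/\theta^{2p+2}$, and use $2-2\cos\theta = \theta^2 + O(\theta^4)$ so that this term tends to $1$; all the $j\ne 0$ terms are multiplied by $(2-2\cos\theta)^{p+1}\to 0$ with bounded coefficients, hence vanish. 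This gives $M_p(0)=1$. Alternatively, and more cleanly, $M_p(0)$ equals the left-hand side of \eqref{eq:51} evaluated at $\theta=0$, i.e. $\sum_{j\in\Z}\Phi_{2p+1}(p+1-j)$, which is $1$ by the partition-of-unity property of cardinal B-splines; I would mention this as the conceptual reason. With $M_p(0)=1$, $B_p(0)=D_p(0)=0$, formula \eqref{eq:54} gives $q^{\K^p(\rho,\delta)}(1)=1\cdot(0+\rho(1+0))=\rho$.

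Next, $\theta=\pi$. Now $2-2\cos\pi = 4$, so $D_p(\pi)=(-1)^p 4^p=(-1)^p 2^{2p}$ directly. For $M_p(\pi)$ I would evaluate $\sum_{j\in\Z}(\pi+2j\pi)^{-2p-2} = \pi^{-2p-2}\sum_{j\in\Z}(2j+1)^{-2p-2}$, and use the standard identity $\sum_{j\in\Z}(2j+1)^{-2m} = 2\sum_{\ell\ge 0}(2\ell+1)^{-2m} = 2(1-2^{-2m})\zeta(2m)$ (the odd-integer zeta sum), which with $m=p+1$ yields $M_p(\pi) = 4^{p+1}\pi^{-2p-2}\cdot 2(1-2^{-2p-2})\zeta(2p+2) = \tfrac{2}{\pi^{2(p+1)}}(2^{2(p+1)}-1)\zeta(2(p+1))$. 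For $B_p(\pi)$ I would use the Remark identity $B_p(\theta) = -(2-2\cos\theta)M_{p-1}(\theta)$, giving $B_p(\pi) = -4 M_{p-1}(\pi) = -\tfrac{2^3}{\pi^{2p}}(2^{2p}-1)\zeta(2p)$ (for $p=1$ this uses $M_0\equiv 1$, so $B_1(\pi)=-4$, consistent with the closed form since $\tfrac{2^3}{\pi^2}\cdot 3\cdot\zeta(2)=\tfrac{24}{\pi^2}\cdot\tfrac{\pi^2}{6}=4$). Finally, plugging into \eqref{eq:54} at $\theta=\pi$, the prefactor $e^{\mi p\pi}=(-1)^p$ multiplies $B_p(\pi)+\rho(M_p(\pi)+\delta(-1)^p D_p(\pi))$, and since $(-1)^p D_p(\pi) = (-1)^p(-1)^p 2^{2p}=2^{2p}$, I obtain exactly the stated expression for $q^{\K^p(\rho,\delta)}(-1)$.

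The only genuinely delicate point is the evaluation of $M_p(0)$, because of the $0\cdot\infty$ indeterminacy in the series representation; the cleanest rigorous route is to bypass the series altogether and read $M_p(0)$ off the left-hand side of \eqref{eq:51} via the partition of unity $\sum_{j\in\Z}\Phi_{2p+1}(p+1-j)=1$. Everything else is a routine combination of the closed forms from Proposition \ref{prop:51}, the value $2-2\cos\pi=4$, and the well-known Dirichlet-series evaluation of $\sum_{\ell\ge 0}(2\ell+1)^{-2m}$ in terms of $\zeta$; I would also do a quick sanity check at $p=1$ against the entries of $\D_h^1,\B_h^1,\M_h^1$ to confirm signs.
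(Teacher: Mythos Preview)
Your proposal is correct and follows essentially the same route as the paper: evaluate the closed forms of $M_p,B_p,D_p$ from Proposition~\ref{prop:51} at $\theta=0$ and $\theta=\pi$, reduce the $\theta=\pi$ sums to the odd-integer Dirichlet series $\sum_{\ell\ge 0}(2\ell+1)^{-2m}=(1-2^{-2m})\zeta(2m)$, and then read off $q^{\K^p(\rho,\delta)}(\pm 1)$ from \eqref{eq:54}. One small point: your claim that $B_p(0)=0$ is ``immediate'' from the factor $(2-2\cos\theta)^{p+1}$ glosses over the fact that the accompanying series $\sum_j(\theta+2j\pi)^{-2p}$ also blows up at $\theta=0$; the same $j=0$ isolation you use for $M_p(0)$ (or the identity $B_p=-(2-2\cos\theta)M_{p-1}$ you invoke later) cleanly resolves this.
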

\begin{proof}
The limits as $\theta \to 0$ and the values at $\theta=\pi$ are readily obtained from the explicit expressions of $M_p, B_p, D_p$ in Proposition \ref{prop:51}:
\begin{align} \label{eq:55}
	&\lim_{\theta \to 0} M_p(\theta) = 1, \quad \lim_{\theta \to 0} B_p(\theta) = 0, \quad \lim_{\theta \to 0} D_p(\theta) = 0;
\end{align}
\begin{equation*}
\begin{aligned} 
	M_p(\pi) =  2^{2(p+1)} \sum_{j \in \Z} \frac{1}{(\pi + 2j\pi)^{2(p+1)}}&, \quad B_p(\pi) = -2^{2(p+1)} \sum_{j \in \Z} \frac{1}{(\pi + 2j\pi)^{2p}}, \quad D_p(\pi)= (-1)^p 2^{2p}.
\end{aligned}
\end{equation*}
It remains to explicitly calculate the two infinite sums. We have, for $m \ge 1$,
\begin{align*}
	\sum_{j \in \Z} \frac{1}{(\pi+2j \pi)^{2m}} & = \frac{1}{\pi^{2m}} \sum_{j \in \Z} \frac{1}{(1+2j)^{2m}}
	\\ & = \frac{1}{\pi^{2m}} \sum_{j=0}^\infty \frac{1}{(1+2j)^{2m}} + \frac{1}{\pi^{2m}} \sum_{j=1}^{\infty} \frac{1}{(2j-1)^{2m}}
	\\ & =  \frac{2}{\pi^{2m}} \sum_{j=1}^{\infty} \frac{1}{(2j-1)^{2m}}
	 \\ &= \frac{1}{\pi^{2m}} \frac{(2^{2m}-1)}{2^{2m-1}} \zeta(2m),
\end{align*}
where for the last equality, we used \cite[Equation 0.233.5]{GradshteynRyzhik2015}. The expressions for $q^{\K^p(\rho,\delta)}(\pm 1)$ follow from \eqref{eq:54}.
\end{proof}
\begin{remark}
The cardinal splines satisfy the property of partition for unity (see \cite[Theorem 4.3]{Chui1992}), meaning $\sum_{j \in \Z} \Phi_p(s-j) = 1$, and consequently $\sum_{j \in \Z} \partial_s \Phi_p(s-j) = 0$. Therefore, $q^{\K^p(\rho,\delta)}(1)$ can be computed directly without Proposition \ref{prop:51} as 
\begin{equation*}
\begin{aligned}
	q^{\K^p(\rho,\delta)}(1) = \sum_{j \in \Z} \partial_s^2 \Phi_{2p+1}(p+1-j) + \rho \Biggl(\sum_{j \in \Z} \Phi_{2p+1}(p+1-j) + \delta (-1)^p \sum_{j \in \Z} \partial_s^{2p} \Phi_{2p+1}(p+1-j)\Biggr) = \rho.
\end{aligned}
\end{equation*}
\end{remark}
Recalling Proposition \ref{prop:410}, we establish a condition on the parameters $\rho$ and $\delta$ that is necessary for the weakly well-conditioning of the family of matrices $\{ \K_n^p(\rho,\delta)\}_n$, for each $p \ge 1$.
\begin{proposition} \label{prop:56}
Let $p \ge 1$ and define the quantities
\begin{equation} \label{eq:56}
\begin{aligned}
	\delta_p & = -\frac{M_p(\pi)}{(-1)^p D_p(\pi)} =  -\frac{(2^{2(p+1)}-1)}{2^{2p-1} \pi^{2(p+1)}} \zeta\left(2(p+1)\right), \quad \rho_p & = - \frac{B_p(\pi)}{M_p(\pi)} = 4 \pi^2 \frac{(2^{2p}-1)}{(2^{2(p+1)}-1)} \frac{\zeta(2p)}{\zeta(2(p+1))}
	\end{aligned}
\end{equation}
with $M_p, B_p$ and $D_p$ as in Proposition \ref{prop:51}. If $p>1$, assume that not all the zeros of $q^{\K^p(\rho,\delta)}$ are on the boundary of the unit circle. Then, the following statements are valid.
\begin{itemize}
\item If the family of matrices $\{\K_n^p(\rho,\delta)\}_n$ is weakly well-conditioned for all $\rho > 0$, then $\delta \le \delta_\rho$. 
\item If the family of matrices $\{\K^p_n(\rho,0)\}_n$ is weakly well-conditioned, then $\rho \le \rho_p$.  
\end{itemize}
\end{proposition}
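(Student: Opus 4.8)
The plan is to apply Proposition~\ref{prop:410} to the family $\{\K_n^p(\rho,\delta)\}_n$, using the values of $q^{\K^p(\rho,\delta)}(\pm 1)$ computed in Corollary~\ref{cor:54}. Since the matrices $\K_n^p(\rho,\delta)$ have the symmetric Toeplitz band structure of $\G_n^p$ (by virtue of \eqref{eq:311} and Proposition~\ref{prop:32}), and since the hypothesis that not all zeros of $q^{\K^p(\rho,\delta)}$ lie on the unit circle is assumed, weakly well-conditioning of $\{\K_n^p(\rho,\delta)\}_n$ forces \eqref{eq:48}, i.e. $(-1)^p q^{\K^p(\rho,\delta)}(-1)\, q^{\K^p(\rho,\delta)}(1) \le 0$. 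By Corollary~\ref{cor:54} we have $q^{\K^p(\rho,\delta)}(1) = \rho$, which is strictly positive for $\rho > 0$, so the inequality reduces to $(-1)^p q^{\K^p(\rho,\delta)}(-1) \le 0$.

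The next step is to unpack $(-1)^p q^{\K^p(\rho,\delta)}(-1)$ using Corollary~\ref{cor:54}. We obtain
\begin{equation*}
	(-1)^p q^{\K^p(\rho,\delta)}(-1) = B_p(\pi) + \rho\bigl(M_p(\pi) + \delta(-1)^p D_p(\pi)\bigr),
\end{equation*}
so that the necessary condition $(-1)^p q^{\K^p(\rho,\delta)}(-1) \le 0$ reads
\begin{equation*}
	B_p(\pi) + \rho M_p(\pi) + \rho\,\delta(-1)^p D_p(\pi) \le 0.
\end{equation*}
For the first bullet, I would observe that $M_p(\pi) > 0$ (it is a positive multiple of $\zeta(2(p+1))$) and $(-1)^p D_p(\pi) = 2^{2p} > 0$. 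Rearranging, the condition becomes $\delta \le -\dfrac{B_p(\pi) + \rho M_p(\pi)}{\rho\,(-1)^p D_p(\pi)} = -\dfrac{B_p(\pi)}{\rho\,(-1)^p D_p(\pi)} - \dfrac{M_p(\pi)}{(-1)^p D_p(\pi)}$. Since $B_p(\pi) < 0$, the term $-B_p(\pi)/(\rho(-1)^p D_p(\pi))$ is strictly positive and tends to $0$ as $\rho \to \infty$; hence the infimum over all $\rho > 0$ of the right-hand side is exactly $-M_p(\pi)/((-1)^p D_p(\pi)) = \delta_p$. Therefore, if the family is weakly well-conditioned for \emph{all} $\rho > 0$, then $\delta \le \delta_p$ must hold. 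The explicit closed form of $\delta_p$ then follows by substituting the expressions for $M_p(\pi)$ and $D_p(\pi)$ from Corollary~\ref{cor:54}.

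For the second bullet, I set $\delta = 0$, so the necessary condition collapses to $B_p(\pi) + \rho M_p(\pi) \le 0$, i.e. $\rho \le -B_p(\pi)/M_p(\pi) = \rho_p$ (using $M_p(\pi) > 0$ and $B_p(\pi) < 0$). Again the closed form of $\rho_p$ comes from substituting the values of $B_p(\pi)$ and $M_p(\pi)$; one also uses the identity $B_p(\pi) = -4 M_{p-1}(\pi)$ from the preceding remark to present $\rho_p$ neatly. I do not anticipate a serious obstacle here — the argument is a routine rearrangement once Corollary~\ref{cor:54} and Proposition~\ref{prop:410} are in hand; the only point requiring a little care is the passage to the infimum over $\rho$ in the first bullet, where one must note that weakly well-conditioning is required to hold simultaneously for every $\rho > 0$, so the constraint on $\delta$ is the intersection (infimum) of the $\rho$-dependent constraints, and that the sign facts $M_p(\pi) > 0$, $B_p(\pi) < 0$, $(-1)^p D_p(\pi) > 0$ are used throughout.
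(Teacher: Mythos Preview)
Your argument follows the paper's approach and the algebraic manipulation is correct, but there is one missing ingredient. The matrices $\K_n^p(\rho,\delta)$ are not purely Toeplitz band matrices: by Proposition~\ref{prop:32}, they carry perturbed entries in their top-left and bottom-right corners. Proposition~\ref{prop:410} (and the underlying Lemma~\ref{lem:47}) is stated for the family $\{\G_n^p\}_n$ of \emph{pure} Toeplitz band matrices as in~\eqref{eq:44}. To apply the necessary condition~\eqref{eq:48} to $\{\K_n^p(\rho,\delta)\}_n$, you must first invoke Theorem~\ref{th:45}, which guarantees that the perturbed family $\{\K_n^p(\rho,\delta)\}_n$ has the same conditioning behaviour as its pure Toeplitz part. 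The paper's proof cites Theorem~\ref{th:45} explicitly for this reason; your phrase ``have the symmetric Toeplitz band structure of $\G_n^p$'' glosses over precisely this point.

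Once that bridge is in place, the remainder of your argument --- reducing to $(-1)^p q^{\K^p(\rho,\delta)}(-1) \le 0$ via $q^{\K^p(\rho,\delta)}(1) = \rho > 0$, and then rearranging using the signs $M_p(\pi) > 0$, $B_p(\pi) < 0$, $(-1)^p D_p(\pi) > 0$ --- is correct and matches the paper. Your infimum-over-$\rho$ presentation for the first bullet is equivalent to the paper's implicit observation that the coefficient of $\rho$ in $B_p(\pi) + \rho\bigl(M_p(\pi) + \delta(-1)^p D_p(\pi)\bigr)$ must be nonpositive for the inequality to hold for all $\rho > 0$.
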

\begin{proof}
The result directly follows by Proposition \ref{prop:410}, Corollaries \ref{cor:54} and Theorem \ref{th:45}. Specifically, if the family of matrices $\{\K^p_n(\rho,\delta)\}_n$ is weakly well-conditioned, then we have $(-1)^p q^{\K^p(\rho,\delta)}(-1)q^{\K^p(\rho,\delta)}(1) \le 0$, which is equivalent to 
\begin{equation*}
    B_p(\pi) + \rho(M_p(\pi) + \delta (-1)^p D_p(\pi)) \le 0
\end{equation*}
or explicitly,
\begin{equation*}
	-\frac{2^3}{\pi^{2p}}(2^{2p}-1) \zeta(2p) + \rho \left(\frac{2}{\pi^{2(p+1)}}(2^{2(p+1)}-1) \zeta\left(2(p+1)\right) + \delta 2^{2p}\right) \le 0.
\end{equation*} 
\end{proof}
\begin{remark}
As a consequence of the proof of Lemma \ref{lem:59} that follows, we will conclude that for all $p \ge 1$, $\rho>0$ and $\delta \le0$ the polynomial $q^{\K^p(\rho,\delta)}$ has at most two zeros on the boundary of the unit circle. Therefore, the related assumption of Proposition \ref{prop:56} is always satisfied.
\end{remark}
\begin{remark}
Note that $\zeta(m) \to 1$ as $m \to +\infty$. This can observed from the inequality 
\begin{equation*}
	1 \le \zeta(m) = \sum_{n=1}^\infty n^{-m} = 1 + \sum_{n=2}^\infty n^{-m} \le 1 +\int_1^\infty x^{-m} \dd x = 1+ \frac{1}{m-1}.
\end{equation*}
Consequently, we deduce $\rho_p \to \pi^2$ and $\delta_p \sim -8/\pi^2 \, \pi^{-2p} \to 0$ as $p \to \infty$. These results are consistent with those obtained in \cite[Figure 1]{FLMS2023}, where the trend $\delta_p \approx -10^{-p}$ was observed. Furthermore, the constants $\rho_1$ and $\delta_1$ are exactly the same as those obtained in \cite{SZ2019}. Some values of $\rho_p$ and $\delta_p$ are reported in Table \ref{tab:1}.
\begin{table}[h]
  \centering
  \renewcommand{\arraystretch}{1.5} 
  \begin{tabular}{|c|c|c|c|c|c|c|c|c|c|c|c|}\hline
   $p$ & 1 & 2 & 3 & 4 & 5 & 6 \\\hline
   $\rho_p$ & 12 &  10   & $\nicefrac{168}{17}$ & $\nicefrac{306}{31}$ & $\nicefrac{2349}{238}$ &      $\nicefrac{7797}{790}$ \\
   \hline 
   $\delta_p$ & $-\nicefrac{1}{12}$ & $-\nicefrac{1}{120}$ & $-\nicefrac{17}{20160}$  & $-\nicefrac{5}{58529}$&   $-\nicefrac{2}{231067}$   & $-\nicefrac{1}{1140271}$  \\
  \hline 
  \end{tabular}
  \vspace{0.1cm}
  \caption{Values of $\rho_p$ and $\delta_p$ as defined in \eqref{eq:56}.}
  \label{tab:1}
\end{table}

Moreover, for all $p \ge 1$, we have $\rho_p > \pi^2$. Indeed, this is equivalent to the inequality
\begin{equation} \label{ZETA}
	\frac{\zeta(2p+2)}{\zeta(2p)} < 4 \frac{2^{2p}-1}{2^{2(p+1)}-1},
\end{equation}
which is always true as shown in \cite[Theorem 1.1]{Qi2019} (see also \cite[Equation 2.1]{Qi2019}).
\end{remark}

\subsection{Sufficient condition for the weakly well-conditioning of $\{\K_n^p(\rho,\delta)\}_n$} \label{sec:52}
We finally establish that the conditions stated in Proposition \ref{prop:56} are also sufficient for ensuring the weakly well-conditioning of the family $\{ \K^p_n(\rho,\delta)\}_n$. This fact relies primarily on technical estimates established in \cite{DonatelliGaroniManniSerraCapizzanoSpeleers2017, EkstromFurciGaroniManniSerraCapizzano2018}, within the framework of symbols-based spectral analysis for IgA discretizations. 

In the following lemma, we investigate when $q^{\K^p(\rho,\delta)}$ has exactly two zeros on the boundary of the unit circle.
\begin{lemma} \label{lem:59}
For $p \ge 1$, let $\delta_p$ and $\rho_p$ be defined as in \eqref{eq:56}. Define the function 
\begin{equation*}
	F_p(\theta;\rho,\delta) = e^{- \mi p \theta} q^{\K^p(\rho,\delta)}(e^{\mi \theta}) = B_p(\theta) + \rho(M_p(\theta) + \delta (-1)^p D_p(\theta))
\end{equation*}
with $M_p, B_p$ and $D_p$ as in Proposition \ref{prop:51}. Then, the following statements hold true.
\begin{itemize}
	\item The equation (in terms of $\theta$) $F_p(\theta;\rho,\delta) = 0$ has exactly two simple zeros in $[-\pi,\pi]$ for all $\rho > 0$ if and only if $\delta \le \delta_p$;
	\item The equation (in terms of $\theta$) $F_p(\theta;\rho,0) = 0$ has exactly two simple zeros in $[-\pi,\pi]$ if and only if $\rho \le \rho_p$.
\end{itemize}
\end{lemma}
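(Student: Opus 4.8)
The plan is to analyze the function $F_p(\theta;\rho,\delta) = B_p(\theta) + \rho(M_p(\theta) + \delta(-1)^pD_p(\theta))$ on $[-\pi,\pi]$ directly, exploiting the explicit formulas from Proposition \ref{prop:51}. Since $F_p$ is even in $\theta$, it suffices to count zeros on $[0,\pi]$ and track multiplicities. First I would record the behavior at the endpoints: from Corollary \ref{cor:54}, $F_p(0;\rho,\delta) = B_p(0)+\rho M_p(0) = \rho > 0$ for all $\rho>0$, so $\theta=0$ is never a zero; and $F_p(\pi;\rho,\delta) = B_p(\pi) + \rho(M_p(\pi)+\delta(-1)^pD_p(\pi))$, whose sign is governed exactly by the quantity appearing in Proposition \ref{prop:56}. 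The plan then hinges on writing $F_p(\theta;\rho,\delta) = (2-2\cos\theta)^p\,G_p(\theta;\rho,\delta)$ after factoring out the common vanishing behavior at $\theta=0$ — note $M_p(\theta) = (2-2\cos\theta)^{p+1}\sum_j(\theta+2j\pi)^{-2p-2}$, $B_p(\theta) = -(2-2\cos\theta)^{p+1}\sum_j(\theta+2j\pi)^{-2p}$, and $D_p(\theta) = (-1)^p(2-2\cos\theta)^p$, so $G_p(\theta;\rho,\delta) = (2-2\cos\theta)\bigl(-\sum_j(\theta+2j\pi)^{-2p} + \rho\sum_j(\theta+2j\pi)^{-2p-2}\bigr) + \rho\delta$. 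On $(0,\pi]$ the factor $(2-2\cos\theta)^p$ is strictly positive, so the zeros of $F_p$ in $(0,\pi]$ coincide with those of $G_p$, and a zero of $F_p$ at an interior point $\theta_0\in(0,\pi)$ is simple for $F_p$ iff it is simple for $G_p$.

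Next I would study monotonicity of $G_p$. The key claim is that the function $\theta \mapsto -\sum_j(\theta+2j\pi)^{-2p} + \rho\sum_j(\theta+2j\pi)^{-2p-2}$, or more precisely the whole of $G_p(\theta;\rho,\delta)-\rho\delta$, is strictly monotone on $(0,\pi)$; this is where I expect to lean on the technical estimates from \cite{DonatelliGaroniManniSerraCapizzanoSpeleers2017, EkstromFurciGaroniManniSerraCapizzano2018} on the symbols $e_p(\theta) = -B_p(\theta)/M_p(\theta)$ and related monotonicity/convexity properties of $M_p$ and $B_p/M_p$. Granting such a monotonicity statement, $G_p(\cdot;\rho,\delta)$ — being monotone plus a constant $\rho\delta$ — has at most one zero on $(0,\pi)$, with any such zero automatically simple, and it has exactly one zero there iff the values at the two ends of $(0,\pi)$ have opposite (strict) signs. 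The limit of $G_p$ as $\theta\to 0^+$ should work out to $\rho\delta$ (the analytic-function parts vanish like $(2-2\cos\theta)$), which is $\le 0$, with equality only if $\delta=0$; and $G_p(\pi;\rho,\delta)$ equals $F_p(\pi;\rho,\delta)/2^{2p}$, up to the explicit positive constant $(2-2\cos\pi)^p = 2^{2p}$. So "exactly one zero in $(0,\pi)$, simple" becomes: $G_p(\pi;\rho,\delta) > 0$ (when $\delta<0$, so that the left end is strictly negative), or a boundary-case analysis when $\delta=0$ forcing the zero to sit at $\theta=0$ unless $G_p(\pi)>0$ too.

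Then I would unwind this into the two bullet points. For the first bullet (arbitrary $\rho>0$, find the threshold on $\delta$): "exactly two simple zeros in $[-\pi,\pi]$" means (using evenness) "exactly one simple zero in $(0,\pi)$ and none at $0$ or $\pi$", which by the above translates to $G_p(\pi;\rho,\delta)>0$ for all $\rho>0$, i.e. $B_p(\pi)/2^{2p} + \rho(M_p(\pi)/2^{2p} + \delta(-1)^pD_p(\pi)/2^{2p}) > 0$; since $B_p(\pi)<0$ and $D_p(\pi)=(-1)^p2^{2p}>0$ in the relevant sign, the requirement "for all $\rho>0$" forces the $\rho$-coefficient to be $\ge 0$, i.e. $M_p(\pi) + \delta(-1)^pD_p(\pi)\ge 0$, equivalently $\delta \ge -M_p(\pi)/((-1)^pD_p(\pi)) = \delta_p$ — wait, here one must be careful about the sign and recover $\delta\le\delta_p$; I would check that the correct reading of "two zeros" versus "at most two / more than two" makes the inequality come out as $\delta\le\delta_p$ as stated (the subtlety is whether pushing $\delta$ below $\delta_p$ destroys a zero at $\theta=\pi$ or creates an extra one, and tracking that via the sign of $F_p(\pi)$ is exactly what Corollary \ref{cor:54} supplies). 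For the second bullet ($\delta=0$), $G_p(\pi;\rho,0)>0$ reads $B_p(\pi) + \rho M_p(\pi) > 0$, i.e. $\rho < -B_p(\pi)/M_p(\pi) = \rho_p$; and at $\rho=\rho_p$ one checks that $\theta=\pi$ becomes a (simple) zero, preserving the total count of two, so the condition is $\rho\le\rho_p$. The main obstacle I anticipate is establishing the strict monotonicity of $G_p(\cdot;\rho,\delta)$ on $(0,\pi)$ uniformly in the parameters — this is the analytic heart and is precisely why the lemma is flagged as relying on the external estimates of \cite{DonatelliGaroniManniSerraCapizzanoSpeleers2017, EkstromFurciGaroniManniSerraCapizzano2018}; a secondary but real annoyance is the careful endpoint bookkeeping (simple vs. double zero at $\theta=\pi$, and the $\delta=0$ degenerate case at $\theta=0$) needed to land the inequalities as non-strict ($\le$) rather than strict.
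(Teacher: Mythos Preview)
Your overall strategy --- reduce to $[0,\pi]$ by evenness, divide $F_p$ by a suitable positive function, show the quotient is strictly monotone, and then read off the condition from the sign at the endpoints --- is exactly the paper's strategy. However, the specific normalization you chose, factoring out $(2-2\cos\theta)^p$, does not work, and this is the source of the sign confusion you yourself flag near the end.

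The concrete error is your claim that $\lim_{\theta\to 0^+} G_p(\theta;\rho,\delta) = \rho\delta$ because ``the analytic-function parts vanish like $(2-2\cos\theta)$''. They do not: for instance
\[
(2-2\cos\theta)\sum_{j\in\Z}(\theta+2j\pi)^{-2p-2} \;=\; \frac{M_p(\theta)}{(2-2\cos\theta)^p}\;\longrightarrow\;+\infty \qquad (\theta\to 0^+)
\]
for every $p\ge 1$, since $M_p(0)=1$. In fact $G_p(\theta;\rho,\delta)\sim \rho\,\theta^{-2p}\to+\infty$ near $0$, which is exactly what is needed so that $F_p(0)=(2-2\cos\theta)^p G_p\big|_{\theta=0}=\rho>0$. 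With the correct limit the endpoint picture flips: the condition for a unique (simple) zero of $F_p$ on $(0,\pi]$ is $F_p(\pi;\rho,\delta)\le 0$, not $G_p(\pi)>0$, and then $\delta\le\delta_p$ and $\rho\le\rho_p$ drop out directly, without the sign gymnastics you anticipated.

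More seriously, the monotonicity of your $G_p$ on $(0,\pi)$ is \emph{not} what the cited external results deliver, and is doubtful: writing $G_p-\rho\delta = \rho\,M_p/(2-2\cos\theta)^p - M_{p-1}/(2-2\cos\theta)^{p-1}$, the first term is decreasing and the second is increasing (for $p\ge 2$), both singular at $0$, and for small $\rho$ the balance can go the wrong way. The paper instead divides by $M_p(\theta)$, which is strictly positive on all of $[0,\pi]$ with $M_p(0)=1$, obtaining
\[
\frac{F_p(\theta;\rho,\delta)}{M_p(\theta)} \;=\; \frac{B_p(\theta)}{M_p(\theta)} + \rho + \rho\delta\,\frac{(-1)^pD_p(\theta)}{M_p(\theta)}.
\]
Now the cited results apply verbatim: $B_p/M_p$ is strictly decreasing on $(0,\pi)$ by \cite[Theorem~2]{EkstromFurciGaroniManniSerraCapizzano2018}, while $(-1)^pD_p/M_p=(2-2\cos\theta)^p/M_p$ is strictly increasing because $(-1)^pD_p$ is increasing and $M_p$ is decreasing by \cite[Lemma~A.2]{DonatelliGaroniManniSerraCapizzanoSpeleers2017}; since $\rho\delta\le 0$, the whole quotient is strictly decreasing. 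That gives at most one zero of $F_p$ on $[0,\pi]$, and comparing $F_p(0;\rho,\delta)=\rho>0$ with $F_p(\pi;\rho,\delta)$ from Corollary~\ref{cor:54} yields the two thresholds.
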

\begin{proof}
Let $p \ge 1$. We study the equation only in the interval $[0,\pi]$ since $F_p(\theta; \rho,\delta)$ is even in $\theta$. From \eqref{eq:55} and the definition in Proposition \ref{prop:51}, we observe that $M_p(\theta) > 0$ for all $\theta \in [0,\pi]$. Hence, the ratio $F_p(\theta;\rho,\delta)/M_p(\theta)$ is well defined for $\theta \in [0,\pi]$. Our objective is to show that for all $p \ge 1$, $\rho > 0$ and $\delta \le 0$, the function $F_p(\theta; \rho, \delta)/M_p(\theta)$ is strictly decreasing in $\theta$ for $\theta \in (0,\pi)$. We begin by computing
\begin{equation*}
	\frac{d}{d \theta} \frac{F_p(\theta;\rho,\delta)}{M_p(\theta)} = \frac{d}{d \theta} \frac{B_p(\theta)}{M_p(\theta)}  + \rho \delta (-1)^p \left( \frac{D'_p(\theta)M_p(\theta) - D_p(\theta)M_p'(\theta)}{(M_p(\theta))^2} \right).
\end{equation*}
It is shown in \cite[Theorem 2]{EkstromFurciGaroniManniSerraCapizzano2018} that $\frac{d}{d \theta}\left[B_p(\theta)/M_p(\theta)\right] < 0$ for all $p \ge 1$ and $\theta \in (0,\pi)$. Additionally, we calculate $(-1)^p D_p'(\theta) = 2p \sin \theta (2-2\cos \theta)^{p-1} > 0$ for all $\theta \in (0,\pi)$. Therefore, combining \cite[Lemma A.2.]{DonatelliGaroniManniSerraCapizzanoSpeleers2017} that tell us that $M_p'(\theta) < 0$ for all $p \ge 1$ and $\theta \in (0,\pi)$, and the fact that $M_p(\theta) > 0$, and $(-1)^p D_p(\theta) > 0$, we conclude that for all $\rho >0$ and $\delta \le 0$, $\frac{d}{d \theta} [F_p(\theta; \rho, \delta)/M_p(\theta)] < 0$ for all $\theta \in (0,\pi)$. This confirms that $F_p(\theta; \rho, \delta)/M_p(\theta)$ is strictly decreasing in $\theta$ for $\rho>0$ and $\delta \le 0$.

We can state that $F_p(\theta; \rho, \delta)$ has exactly one zero in $[0,\pi]$ if and only if $F_p(0; \rho, \delta) \ge 0$ and $F_p(\pi ; \rho, \delta) \le 0$. Both quantities have been computed in Corollary \ref{cor:54}, leading to the desired result.
\end{proof}
We conclude this section with the main result of the paper.
\begin{theorem} \label{th:510}
For $p \ge 1$, let $\delta_p$ and $\rho_p$ be the quantities defined in \eqref{eq:56}. Then, the following statements hold true assuming Assumption \ref{assu:1}.
\begin{itemize}
	\item The family of matrices $\{\K_n^p(\rho,\delta)\}_n$ is weakly well-conditioned for all $\rho > 0$ if and only if $\delta \le \delta_p$. 
	\item The family $\{\K^p_n(\rho,0)\}_n$ is weakly well-conditioned if and only if $\rho \le \rho_p$.  
\end{itemize}
\end{theorem}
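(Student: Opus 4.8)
The plan is to reduce Theorem \ref{th:510} to the auxiliary results already assembled in the paper: Theorem \ref{th:45} (so that the perturbations at the corners of $\K_n^p(\rho,\delta)$ do not affect the conditioning behaviour), Lemma \ref{lem:47} (which identifies weakly well-conditioning with the associated polynomial $q^{\K^p(\rho,\delta)}$ having exactly two zeros of unit modulus, provided not all zeros lie on the unit circle), and Lemma \ref{lem:59} (which counts the zeros of $F_p(\theta;\rho,\delta)$ on $[-\pi,\pi]$, equivalently the unit-modulus zeros of $q^{\K^p(\rho,\delta)}$). Since $q^{\K^p(\rho,\delta)}$ is self-reciprocal of degree $2p$, its zeros come in pairs $\xi,\xi^{-1}$, so it is automatically of type $(r,2p-2r,r)$ for some $r\in\{0,\dots,p\}$; the relevant dichotomy is whether the number of unit-modulus zeros is exactly $2$ or not.

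First I would record that, by \eqref{eq:54} in Proposition \ref{prop:51}, $z=e^{\mi\theta}$ is a zero of $q^{\K^p(\rho,\delta)}$ if and only if $F_p(\theta;\rho,\delta)=0$, and that (counting multiplicity) the number of unit-modulus zeros of $q^{\K^p(\rho,\delta)}$ equals the number of zeros of $F_p(\cdot;\rho,\delta)$ in $[-\pi,\pi)$ — here I must be a little careful about whether $\theta=0$ or $\theta=\pi$ occurs and about multiplicities, but the monotonicity of $F_p/M_p$ established in the proof of Lemma \ref{lem:59} shows the zeros in $(0,\pi)$ are simple, and $F_p(0;\rho,\delta)=\rho>0$ rules out $\theta=0$. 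So Lemma \ref{lem:59} gives: exactly two unit-modulus zeros (both simple) iff $\delta\le\delta_p$ (for all $\rho>0$), resp. iff $\rho\le\rho_p$ (when $\delta=0$). In particular, whenever $\delta\le 0$, $\rho>0$ we have at most two unit-modulus zeros, hence not all $2p$ zeros are on the unit circle once $p\ge1$ (for $p=1$ one has $2p=2$ and the case of all zeros on the circle would have to be treated directly, but there the matrix is triangular Toeplitz, cf.\ the Remark after Assumption \ref{assu:1}, and the claim is elementary; I should note this so the hypothesis of Lemma \ref{lem:47} is legitimately met). This also justifies the ``assume not all zeros on the unit circle'' hypotheses invoked earlier.

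Then I would chain the equivalences. By Theorem \ref{th:45}, since $\{\K_n^p(\rho,\delta)\}_n$ is (under Assumption \ref{assu:1}, together with the verified non-vanishing of the outer codiagonals of the perturbation blocks) a family of nonsingular perturbed Toeplitz band matrices of the form covered there, it has the same conditioning behaviour as the pure Toeplitz band family with symbol $q^{\K^p(\rho,\delta)}$. By Lemma \ref{lem:47} (applicable by the previous paragraph), that pure family is weakly well-conditioned iff $q^{\K^p(\rho,\delta)}$ has exactly two unit-modulus zeros; and by Lemma \ref{lem:59} this happens iff $\delta\le\delta_p$ (uniformly in $\rho>0$) in the first bullet, and iff $\rho\le\rho_p$ in the second bullet with $\delta=0$. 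For the ``only if'' direction of the first bullet one argues contrapositively: if $\delta>\delta_p$, then by Lemma \ref{lem:59} there is some $\rho>0$ for which $F_p(\pi;\rho,\delta)>0$, i.e.\ $q^{\K^p(\rho,\delta)}$ has no unit-modulus zero and is of type $(p,0,p)=(m_1,0,k)$ with $m_1=p\ne m=p+1$; since $q_1+q_3=2p>0$, $q_1\ne m$ and $q_3\ne k$, the last clause of Theorem \ref{th:41} forces exponential growth of $\kappa$, so the family is not even weakly well-conditioned. (One should double-check that no intermediate value of $\rho$ produces a ``borderline'' type that is still weakly well-conditioned; but as $\rho$ decreases from this bad value, $F_p(\pi;\rho,\delta)$ stays positive for $\rho$ large enough only if $\delta>\delta_p$, and the sign of $F_p(\pi;\cdot,\delta)$ changes at a single $\rho$, so the range of bad $\rho$ is a whole half-line, which is more than enough.) Combining gives the two stated ``if and only if''s.

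The main obstacle is not any single hard computation — all the analytic inputs ($M_p'<0$, $(B_p/M_p)'<0$, the values at $0$ and $\pi$, the self-reciprocal structure) are already quoted — but rather the careful bookkeeping of \emph{types} and \emph{multiplicities}: making sure that ``exactly two zeros of unit modulus'' on the $F_p$ side translates correctly (with multiplicity, and accounting for the pairing $\xi\leftrightarrow\xi^{-1}$) to the polynomial side, that the excluded-case hypothesis of Lemma \ref{lem:47} is genuinely satisfied for all $(\rho,\delta)$ in the relevant ranges, and that in the failure regime the polynomial lands in a type to which the exponential-growth clause of Theorem \ref{th:41} (not merely the weakly-well-conditioned clauses) applies. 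I would also explicitly dispatch the small-$p$ edge case $p=1$ where $2p=2$ so that ``not all zeros on the unit circle'' and ``exactly two unit-modulus zeros'' coincide, to confirm the statement there by the direct triangular-Toeplitz argument rather than via Lemma \ref{lem:47}.
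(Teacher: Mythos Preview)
Your proposal is correct and follows exactly the paper's approach: the paper's own proof is the single sentence ``The result follows from Lemmas \ref{lem:59} and \ref{lem:47}, and Theorem \ref{th:45},'' and you have carefully unpacked precisely this chain, including the check (via the strict monotonicity established in Lemma \ref{lem:59}) that at most two unit-modulus zeros occur so the hypothesis of Lemma \ref{lem:47} is met for $p>1$, and the contrapositive via the exponential-growth clause of Theorem \ref{th:41}. One cosmetic slip: when you write that type $(p,0,p)$ ``$=(m_1,0,k)$'', recall that for these matrices $k=p-1$, not $p$; your subsequent verification that $q_1=p\ne m=p+1$ and $q_3=p\ne k=p-1$ is nevertheless correct and yields the desired exponential blow-up.
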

\begin{proof}
The result follows from Lemmas \ref{lem:59} and \ref{lem:47}, and Theorem \ref{th:45}.
\end{proof}

\section{Alternative IgA stabilizations} \label{sec:6}

In this section, we briefly analyze other possible stabilization terms, and show that, in a certain sense, the one proposed in \cite{FLMS2023} turns out to be the most effective.

Let us consider, for $k \ge 0$ and a suitable $\delta \le 0$, the perturbed bilinear form $a_{\mu,h}^{p,k} : S_{0,\bullet,h}^p \times S_{\bullet,0,h}^p \to \R$ defined as
\begin{equation}\label{eq:61}
\begin{aligned}
    a^{p,k}_{\mu,h}(u_h,v_h) =- (\partial_t u_h, \partial_t v_h)_{L^2(0,T)} & + \mu (u_h,v_h)_{L^2(0,T)} + \mu \delta h^{2k} (\partial_t^k u_h, \partial_t^k v_h)_{L^2(0,T)}.
    \end{aligned}
\end{equation}
Clearly, when $k=p$, this coincides with the stabilized bilinear form $a_{\mu,h}^p$ defined in \eqref{eq:2.5}, and for $k \ge p+1$ the stabilizing term is null, reducing to the unstabilized bilinear form as in \eqref{eq:2.4}.
Moreover, since we are ultimately employing FEM, the consistency term should be at least of order $p+1$ to ensure optimal convergence properties in both the energy and $L^2$-norms, that is $2k \ge p+1$.
\begin{remark}
Another possibility, mantaining the symmetric structure of the numerical scheme, could have been to include the more general stabilizing term:
\begin{equation*}
	 \mu \delta h^{k_1+k_2} \left((\partial_t^{k_1} u_h, \partial_t^{k_2} v_h)_{L^2(0,T)} + (\partial_t^{k_2} u_h, \partial_t^{k_1} v_h)_{L^2(0,T)}\right),
\end{equation*}
for some $k_1, k_2 \ge 0$. However, note that if $k_1 + k _2 \equiv 0 \ (\bmod \, 2)$, then after integration by parts, this term coincides with the one added in \eqref{eq:61} with $k = (k_1+k_2)/2$. Conversely, if $k_1 + k _2 \equiv 1 \ (\bmod \, 2)$, recalling \eqref{eq:37}, this term vanishes when computed on central splines.
\end{remark}
Again for $\rho = \mu h^2$ and $n=N+p-1$, the associated linear system reads:
\begin{equation*}
    \K_{\mu,h}^{p,k}(\delta) = -\B_h^p + \mu \M_h^p + \mu \delta h^{2k} \D_h^{p,k}, \quad 	\K_n^{p,k}(\rho,\delta) = h\K_{\mu,h}^{p,k}(\delta), 
\end{equation*}
with $\M_h^p$ and $\B_h^p$ as in \eqref{eq:3.2}, and the new matrices
\begin{equation*}
	\D^{p,k}_{h}[\ell,j] = \int_0^T \partial_t^k \varphi_j^p(t) \partial_t^k \varphi^p_{\ell-1}(t) \dd t 
\end{equation*}
for $j, \ell = 1,\ldots, n= N+p-1$. Similarly as in Proposition \ref{prop:32}, this new matrix has the same structure as $\D_h^p$, with central non-vanishing elements expressed as
\begin{equation*}
	h^{2k-1} \D^{p,k}_h[\ell,\ell -1 \pm j] = (-1)^k \partial_s^{2k} \Phi_{2p+1}(p+1-j)
\end{equation*}
for $j=0,\ldots,p$ and $\ell=2p+1,\ldots,N-p$. Note also that 
\begin{equation*}
	\D_h^{p,0} = \M^p_h, \quad \D_h^{p,1} = \B^p_h \quad \text{and}  \quad \D_h^{p,p} = \D_h^p.
\end{equation*}

Proposition \ref{prop:51} and Corollary \ref{cor:54} can be readily generalized for $0 \le k \le p$. 
\begin{proposition} \label{prop:62}
For $0 \le k \le p$ and $\theta \in [-\pi,\pi]$, the following relation is valid
\begin{equation*}
	 \partial_s^{2k} \Phi_{2p+1}(p+1) + \sum_{j=1}^{p} \partial_s^{2k} \Phi_{2p+1}(p+1-j) (e^{-\mi j \theta}+e^{\mi j \theta}) = D^k_p(\theta), \label{Dtk}
\end{equation*}
with $D_p^k : [-\pi,\pi] \to \R$ defined as $D_p^k(\theta) = (-1)^k (2-2\cos \theta)^k M_{p-k}(\theta)$ with $M_p$ as in Proposition \ref{prop:51}. More explicitly,
\begin{align*}
	D_p^k(\theta) = (-1)^k (2-2\cos \theta)^{p+1} \sum_{j \in \Z} \frac{1}{(\theta + 2j\pi)^{2(p+1-k)}}.
\end{align*}
In particular, we can evaluate $D_p^0(0) = 1$ and $D_p^k(0) = 0$ for $k \ge 1$, and 
\begin{equation} \label{eq:62}
\begin{aligned}
	D_p^k(\pi) & =  (-1)^k 2^{2(k+1)} M_{p-k}(\pi) = (-1)^k 2^{2k+1} \frac{2^{2(p+1-k)}-1}{\pi^{2(p+1-k)}}\zeta\left(2(p+1-k)\right),
	\end{aligned}
\end{equation}
for $0 \le k \le p$.
\end{proposition}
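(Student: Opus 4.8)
The plan is to follow the strategy of the proof of Proposition~\ref{prop:51}: I would first establish the identity~\eqref{Dtk} by induction on $k$, and then read off the explicit formula for $D_p^k$ together with its values at $\theta=0$ and $\theta=\pi$ exactly as in the proof of Corollary~\ref{cor:54}. Throughout I would use the convention $M_0 \equiv 1$ introduced after Proposition~\ref{prop:51}.

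For the identity, I would prove by induction on $k \ge 0$ the statement that, for every integer $p \ge k$,
\[
\sum_{j \in \Z} \partial_s^{2k}\Phi_{2p+1}(p+1-j)\, e^{\mi j \theta} = (-1)^k (2-2\cos\theta)^k M_{p-k}(\theta), \qquad \theta \in [-\pi,\pi].
\]
The base case $k=0$ is precisely \eqref{eq:51} when $p\ge1$, and is trivial when $p=0$ (both sides equal $1$). For the inductive step I would apply the cardinal-spline recurrence $\partial_s^m \Phi_q(s) = \partial_s^{m-1}\Phi_{q-1}(s) - \partial_s^{m-1}\Phi_{q-1}(s-1)$ of \cite[Theorem 4.3]{Chui1992} twice to obtain $\partial_s^{2k}\Phi_{2p+1}(s) = \partial_s^{2(k-1)}\Phi_{2p-1}(s) - 2\,\partial_s^{2(k-1)}\Phi_{2p-1}(s-1) + \partial_s^{2(k-1)}\Phi_{2p-1}(s-2)$, then evaluate at $s=p+1-j$, multiply by $e^{\mi j\theta}$, sum over $j\in\Z$, and shift the summation index in the two shifted sums, using the same algebraic identity as in the proof of Proposition~\ref{prop:51}, here in the form $1 - 2e^{-\mi\theta} + e^{-2\mi\theta} = -e^{-\mi\theta}(2-2\cos\theta)$. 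One further re-indexing then reduces the $k$-th level sum to
\[
-(2-2\cos\theta) \sum_{n \in \Z} \partial_s^{2(k-1)}\Phi_{2(p-1)+1}\big((p-1)+1-n\big)\, e^{\mi n \theta}.
\]
Since $p-1 \ge k-1$, the induction hypothesis applies to the remaining sum and gives $(-1)^{k-1}(2-2\cos\theta)^{k-1}M_{p-k}(\theta)$; multiplying by $-(2-2\cos\theta)$ yields the claim for $k$.

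Given \eqref{Dtk}, the explicit formula for $D_p^k$ then follows at once by inserting $M_{p-k}(\theta) = (2-2\cos\theta)^{p-k+1}\sum_{j\in\Z}(\theta+2j\pi)^{-2(p-k+1)}$ from Proposition~\ref{prop:51} and collecting the powers of $(2-2\cos\theta)$. For the boundary values, $D_p^0 = M_p$ gives $D_p^0(0)=M_p(0)=1$, while for $k\ge1$ the prefactor $(2-2\cos\theta)^k$ vanishes at $\theta=0$ and $M_{p-k}(0)=1$, so $D_p^k(0)=0$; at $\theta=\pi$ one has $2-2\cos\pi=4$, hence $D_p^k(\pi) = (-1)^k 4^k M_{p-k}(\pi)$, into which I would substitute the value of $M_{p-k}(\pi)$ from Corollary~\ref{cor:54} to obtain \eqref{eq:62}.

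I expect the computation to be essentially routine once Proposition~\ref{prop:51} is available; the only point needing care is the bookkeeping in the inductive step, since each application of the recurrence simultaneously lowers the spline degree by $2$ and the centering of the argument by $1$, and one must check that the reduced instance still lies in the range where the induction hypothesis is available — which is exactly what the constraint $p\ge k$ (guaranteed by $0\le k\le p$) ensures. A conceivable alternative is to apply Poisson summation directly to $\partial_s^{2k}\Phi_{2p+1}$, via \eqref{eq:37} with $k_1=k_2=k$ and $\widehat{\partial_s^k\Phi_p}(\xi)=(\mi\xi)^k\widehat\Phi_p(\xi)$, but the decay of $|\widehat{\partial_s^k\Phi_p}|$ deteriorates as $k\to p$, so the induction above is the safer route and, moreover, it parallels the treatment of $D_p=D_p^p$ already carried out in Proposition~\ref{prop:51}.
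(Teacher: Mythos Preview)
Your proposal is correct and follows essentially the same approach as the paper: the paper states Proposition~\ref{prop:62} as a direct generalization of Proposition~\ref{prop:51} and Corollary~\ref{cor:54}, and the implied proof is precisely the iterative reduction (your induction on $k$) already carried out in the proof of Proposition~\ref{prop:51} for the special cases $k=1$ ($B_p$) and $k=p$ ($D_p$), together with the evaluations at $\theta=0,\pi$ from Corollary~\ref{cor:54}. Your bookkeeping in the inductive step is accurate, and your remark that $p\ge k$ guarantees the induction hypothesis remains available is the only point requiring care.
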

\begin{remark}
Given that $\zeta(2) = \pi^2/6$, the value $D_p^p(\pi)$ is in agreement with the previously obtained $D_p(\pi) = (-1)^p 2^{2p}$.
\end{remark}
The polynomial associated with the Toeplitz band parts of the family of matrices $\{ \K_n^{p,k}(\rho,\delta)\}_n$ is expressed for $\theta \in [-\pi,\pi]$ as
\begin{equation*}
	q^{\K^{p,k}(\rho,\delta)}(e^{\mi \theta}) = e^{\mi p \theta}\left( B_p(\theta) + \rho\left(M_p(\theta) + \delta (-1)^k D_p^k(\theta) \right) \right).
\end{equation*}
Generalizing Theorem \ref{th:510}, we obtain the main result of this section.
\begin{theorem}
For $p \ge 1$ and $1 \le k \le p$, let us define
\begin{equation} \label{eq:63}
	\delta_p^k = - \frac{M_p(\pi)}{(-1)^k D_p^k(\pi)} = - \frac{1}{2^{2k} \pi^{2k}} \frac{(2^{2(p+1)}-1)}{(2^{2(p+1-k)}-1)} \frac{\zeta\left(2(p+1)\right)}{\zeta\left(2(p+1-k)\right)}.
\end{equation}
Let assume the nonsingularity of the matrices $\K_n^{p,k}(\rho,\delta)$ and that all the elements on the outer diagonals of the top-left block of size $2p \times 2p$ of  each $\K_n^{p,k}(\rho,\delta)$ are non-zeros. Then, the family $\{\K_n^{p,k}(\rho,\delta)\}_n$ is weakly well-conditioned for all $\rho > 0$ if and only $\delta \le \delta_p^k$.
\end{theorem}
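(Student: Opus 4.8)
The plan is to follow the same architecture as the proof of Theorem~\ref{th:510}, replacing the role of $D_p$ by $D_p^k$ throughout. The key observation is that the entire argument rests on establishing that the function
\begin{equation*}
	F_p^k(\theta;\rho,\delta) = B_p(\theta) + \rho\left(M_p(\theta) + \delta (-1)^k D_p^k(\theta)\right)
\end{equation*}
divided by $M_p(\theta)$ is strictly monotone on $(0,\pi)$ whenever $\rho>0$ and $\delta\le 0$; once this is known, the number of zeros of $F_p^k$ on $[-\pi,\pi]$ is controlled entirely by the boundary values $F_p^k(0;\rho,\delta)$ and $F_p^k(\pi;\rho,\delta)$, which are furnished by Corollary~\ref{cor:54} together with the new values $D_p^k(0)$ and $D_p^k(\pi)$ computed in Proposition~\ref{prop:62}. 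So, first I would state and prove the analogue of Lemma~\ref{lem:59}: that $F_p^k(\theta;\rho,\delta)=0$ has exactly two simple zeros in $[-\pi,\pi]$ for all $\rho>0$ if and only if $\delta\le\delta_p^k$.

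The monotonicity step is where the structural difference from the $k=p$ case enters. Writing $\frac{d}{d\theta}[F_p^k/M_p] = \frac{d}{d\theta}[B_p/M_p] + \rho\delta(-1)^k \frac{d}{d\theta}[D_p^k/M_p]$, the first summand is negative on $(0,\pi)$ by \cite[Theorem 2]{EkstromFurciGaroniManniSerraCapizzano2018}, exactly as before. For the second summand, I would use the factorization $D_p^k(\theta) = (-1)^k(2-2\cos\theta)^k M_{p-k}(\theta)$ from Proposition~\ref{prop:62}, so that
\begin{equation*}
	(-1)^k \frac{D_p^k(\theta)}{M_p(\theta)} = (2-2\cos\theta)^k \frac{M_{p-k}(\theta)}{M_p(\theta)}.
\end{equation*}
The factor $(2-2\cos\theta)^k$ is nonnegative and increasing on $(0,\pi)$. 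For the ratio $M_{p-k}(\theta)/M_p(\theta)$, I would invoke the fact — which is essentially \cite[Equation 15]{EkstromFurciGaroniManniSerraCapizzano2018} and the properties established in \cite{DonatelliGaroniManniSerraCapizzanoSpeleers2017, EkstromFurciGaroniManniSerraCapizzano2018} — that $M_{p-k}/M_p$ written in terms of the symbol ratios is monotone; since $M_{p-k}(\theta)/M_p(\theta) = (2-2\cos\theta)^{-(p-(p-k))}\bigl(\text{ratio of the trigonometric series}\bigr)$, one checks it is increasing on $(0,\pi)$. Hence $(-1)^k D_p^k/M_p$ is nonnegative and increasing, so $\frac{d}{d\theta}[(-1)^k D_p^k/M_p]\ge 0$; multiplying by $\rho\delta\le 0$ gives a nonpositive contribution, and the sum is strictly negative on $(0,\pi)$. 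I expect this monotonicity of $M_{p-k}/M_p$ to be the main obstacle: for $k=p$ it is trivial (the numerator is the constant $M_0\equiv 1$ and $1/M_p$ is increasing since $M_p$ is decreasing by \cite[Lemma A.2]{DonatelliGaroniManniSerraCapizzanoSpeleers2017}), but for general $1\le k\le p-1$ it requires a genuine comparison of two symbol-type functions, which I would either extract from the cited symbol analysis or prove directly from the series representations.

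With the monotonicity in hand, the rest is routine. Since $F_p^k(\cdot;\rho,\delta)/M_p$ is continuous, even, and strictly decreasing on $(0,\pi)$ with $M_p>0$, the equation $F_p^k=0$ has exactly one zero in $[0,\pi]$ (equivalently two simple zeros in $[-\pi,\pi]$) if and only if $F_p^k(0;\rho,\delta)\ge 0$ and $F_p^k(\pi;\rho,\delta)\le 0$. From Corollary~\ref{cor:54} and Proposition~\ref{prop:62}, $F_p^k(0;\rho,\delta)=B_p(0)+\rho(M_p(0)+\delta(-1)^k D_p^k(0))=\rho>0$ automatically (using $B_p(0)=0$, $M_p(0)=1$, and $D_p^k(0)=0$ for $k\ge 1$), while
\begin{equation*}
	F_p^k(\pi;\rho,\delta) = B_p(\pi) + \rho\bigl(M_p(\pi) + \delta(-1)^k D_p^k(\pi)\bigr),
\end{equation*}
and since $(-1)^k D_p^k(\pi) = 2^{2(k+1)}M_{p-k}(\pi)>0$, the condition $F_p^k(\pi;\rho,\delta)\le 0$ for all $\rho>0$ holds if and only if the coefficient of $\rho$ is nonpositive, i.e. $M_p(\pi)+\delta(-1)^k D_p^k(\pi)\le 0$, i.e. $\delta\le -M_p(\pi)/((-1)^k D_p^k(\pi)) = \delta_p^k$; the closed form in \eqref{eq:63} then follows by substituting the explicit values of $M_p(\pi)$ and $D_p^k(\pi)$. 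Finally, I would note that since $q^{\K^{p,k}(\rho,\delta)}$ has at most two zeros on the unit circle (which follows from the monotonicity argument, exactly as in the remark after Proposition~\ref{prop:56}), the hypothesis ``not all zeros on the boundary'' in Lemma~\ref{lem:47} is automatically satisfied, and combining the zero-count with Lemma~\ref{lem:47} and Theorem~\ref{th:45} — under the stated nonsingularity and nonzero-outer-codiagonal assumptions — yields that $\{\K_n^{p,k}(\rho,\delta)\}_n$ is weakly well-conditioned for all $\rho>0$ if and only if $\delta\le\delta_p^k$, as claimed.
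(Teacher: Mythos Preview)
Your overall architecture matches the paper's: reduce to counting zeros of $F_p^k$ on $[-\pi,\pi]$ via strict monotonicity of $F_p^k/M_p$ on $(0,\pi)$, then evaluate at the endpoints and conclude via Lemma~\ref{lem:47} and Theorem~\ref{th:45}. The gap is exactly where you flag it. Your splitting $(-1)^k D_p^k/M_p = (2-2\cos\theta)^k \cdot (M_{p-k}/M_p)$ asks for the monotonicity of $M_{p-k}/M_p$ on $(0,\pi)$, and this is not furnished by the lemmas you cite: \cite[Lemma~A.2]{DonatelliGaroniManniSerraCapizzanoSpeleers2017} only gives $M_p'<0$, and \cite[Theorem~2]{EkstromFurciGaroniManniSerraCapizzano2018} concerns $B_p/M_p$, not ratios $M_{p-k}/M_p$. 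Since both numerator and denominator are positive and decreasing, the sign of the derivative of the ratio is genuinely in doubt, and a direct series argument does not look straightforward.

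The paper resolves this with a telescoping that keeps one factor of $(2-2\cos\theta)$ attached to each successive ratio:
\begin{equation*}
(-1)^k \frac{D_p^k(\theta)}{M_p(\theta)} \;=\; \prod_{j=0}^{k-1} (2-2\cos\theta)\,\frac{M_{p-k+j}(\theta)}{M_{p-k+j+1}(\theta)}.
\end{equation*}
Each factor equals $-B_{p-k+j+1}(\theta)/M_{p-k+j+1}(\theta)$ (recall $B_q = -(2-2\cos\theta)M_{q-1}$), which is positive on $(0,\pi)$ and strictly increasing there by the very \cite[Theorem~2]{EkstromFurciGaroniManniSerraCapizzano2018} you already invoke for $B_p/M_p$. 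A product of positive increasing functions is increasing, so $(-1)^k D_p^k/M_p$ is increasing, and multiplying by $\rho\delta\le 0$ gives the required nonincreasing contribution to $F_p^k/M_p$. Once you insert this telescoping step in place of your attempted separate treatment of $(2-2\cos\theta)^k$ and $M_{p-k}/M_p$, the remainder of your argument goes through verbatim.
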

\begin{proof}
Let us consider the function $F_p^k(\theta;\rho,\delta) = e^{- \mi p \theta} q^{\K^{p,k}(\rho,\delta)}(e^{\mi \theta})$. We aim to show that the equation (in terms of $\theta$) $F^k_p(\theta;\rho,\delta) = 0$ has exactly one simple zero in $[0,\pi]$ for all $\rho > 0$ if and only if $\delta \le \delta_p^k$. Establishing this fact leads to the conclusion by Lemma \ref{lem:47} and Theorem \ref{th:45}.

As in the proof of Lemma \ref{lem:59}, for all $p \ge 1$, $\rho > 0$, and $\delta < 0$, we show that the function $F_p^k(\theta; \rho, \delta)/M_p(\theta)$ is strictly decreasing in $\theta$. Explicitly, we can express it as
\begin{equation*}
	\frac{F_p^k(\theta ; \rho, \delta)}{M_p(\theta)} = \frac{B_p(\theta)}{M_p(\theta)} + \rho + \rho \delta (-1)^k \frac{D_p^k(\theta)}{M_p(\theta)}.
\end{equation*}
By virtue of \cite[Theorem 2]{EkstromFurciGaroniManniSerraCapizzano2018}, showing that the last term is a decreasing functions suffices. From Proposition \ref{prop:62}, 
\begin{equation*}
	(-1)^k \frac{D^k_p(\theta)}{M_p(\theta)} = (2-2\cos \theta)^k \frac{M_{p-k}(\theta)}{M_p(\theta)} = \prod_{j=0}^{k-1} (2-2\cos \theta) \frac{M_{p-k+j}(\theta)}{M_{p-k+j+1}(\theta)}.
\end{equation*}
Each factor here is a decreasing function (as established in \cite[Lemma A.2.]{DonatelliGaroniManniSerraCapizzanoSpeleers2017}) and positive, hence the product is decreasing as well. Consequently, $F_p^k(\theta; \rho, \delta)$ has only one simple zero in $[0,\pi]$ if and only if $F_p^k(0; \rho, \delta) \ge 0$ and $F_p^k(\pi; \rho, \delta) \le 0$. This holds true for all $\rho > 0$ if and only if $\delta \le \delta_p^k$, given that $F_p^k(0;\rho,\delta)= \rho$. The explicit expression of $\delta_p^k$ is a consequence of \eqref{eq:62}.
\end{proof}
\begin{remark}
We note two distinct trends of $\delta_p^k$ as $p \to \infty$ depending on the nature of $k$:
\begin{itemize}
	\item for $k \equiv k(p)$ such that $p+1-k(p) \to \infty$ as $p \to \infty$, then $\delta_p^{k(p)} \to - \pi^{-2k(p)}$;
	\item for $k \equiv k(p)$ such that $p+1-k(p) \to M \in \N$ as $p \to \infty$, then $\delta_p^{k(p)} \sim - C_M \pi^{-2p} \to 0$ , as $p \to \infty$ with
	\begin{equation} \label{eq:64}
		C_M = \frac{1}{\pi^{(2-2M)}(1-2^{-2M}) \zeta(2M)}.
	\end{equation}
\end{itemize}
If $k(p) = p$, as in the stabilization of \cite{FLMS2023}, i.e., $M=1$, we obtain the already mentioned constant $C_1 = 8/\pi^2$. Moreover, recalling \eqref{ZETA} we conclude that $\{C_M\}_M$ is an increasing sequence, with $C_{M+1}/C_M > \pi^2$, implying that $C_M > 8\pi^{2M-4}$.

In Figure \ref{fig:2}, we exhibit the behaviour of $|\delta_p^k|$ for different choices of $k \equiv k(p)$, and in Figure \ref{fig:3}, we report the sequence $\{C_M\}_M$.
\begin{figure}[h!]
    \begin{minipage}{0.5\textwidth}
    \captionsetup{width=.8\textwidth}
        \centering
        \includegraphics[width=0.85\linewidth]{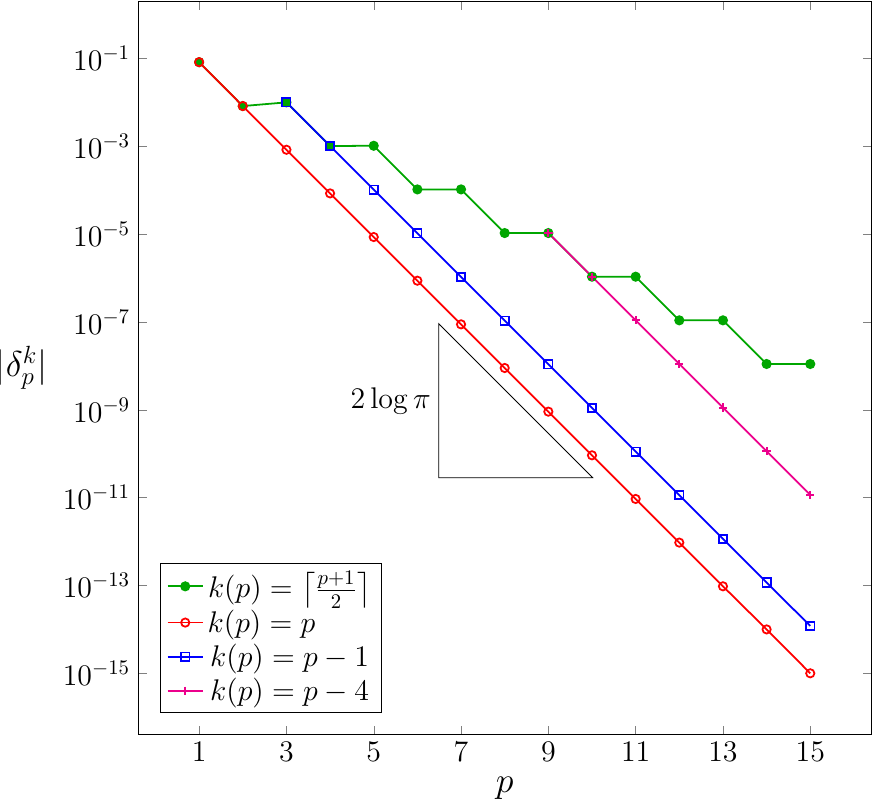}
                \caption{For various $k \equiv k(p)$, by varying $p$, the values of $|\delta_p^k|$, as in \eqref{eq:63}, on a semi-logarithmic scale.}
                \label{fig:2}
    \end{minipage}%
    \hfill
    \begin{minipage}{0.49\textwidth}
    \captionsetup{width=.8\textwidth}
        \centering
        \includegraphics[width=0.85\linewidth]{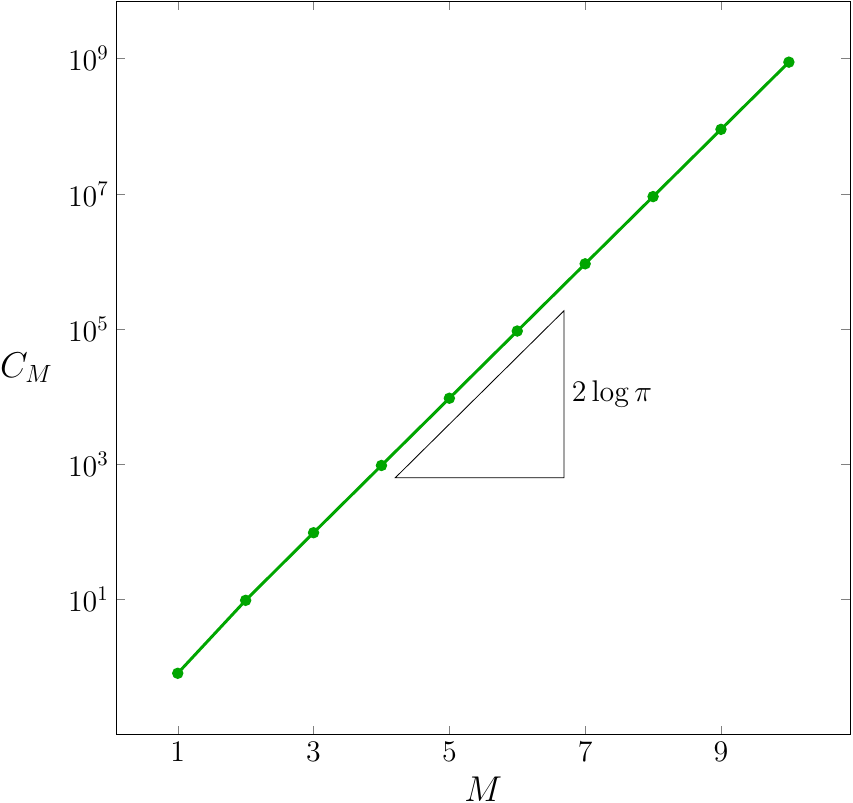}
        \caption{By varying $M$, the sequence $\{C_M\}_M$, as in \eqref{eq:64}, on a semi-logarithmic scale. \phantom{aa} \phantom{1231231}}
        \label{fig:3}
    \end{minipage}
\end{figure}
\end{remark}
We conclude by highlighting the features of the stabilized bilinear form $a_{h,\mu}^{p,p} = a_{h,\mu}^p$ (see \eqref{eq:2.5}):

\begin{itemize}
\item due to the presence of the parameter $\mu$ only as a monomial at the first power, this bilinear form can be easily adapted as a space--time bilinear form $a_h^p$ (see \eqref{eq:2.6};
\item among the possible stabilized bilinear forms $a_{h,\mu}^{p,k}$ (see \eqref{eq:61}) with $0 \le k \le p$, $a_{h,\mu}^p$ guarantees the optimal consistency error in terms of the power of $h$;
\item by considering $a_{h,\mu}^{p,k}$ with $0 \le k \le p$, and $k(p)$ a function of $p$ such that $k(p) \to p+1-M$ as $p \to \infty$ for some $M \in \N$, then the stabilized bilinear form $a_{h,\mu}^p$ guarantees the minimum magnitude for the optimal stabilizing parameter $\delta_p^k \sim -C_M \pi^{-2p}$ in terms of the constant $C_M$.
\end{itemize}

\section{Numerical results} \label{sec:7}
In this section, we provide two numerical tests to validate the sharpness of the quantities $\rho_p$ and $\delta_p$ defined in \eqref{eq:56}.

In Figures \ref{fig:4}-\ref{fig:5} we show the condition numbers of the matrices $\K_n^{p}(\rho,0)$ defined in \eqref{eq:310} with $n=1000$ and $p \in \{1,2,3\}$, and with $n=2000$ and $p \in \{4,5\}$, by varying the parameter $\rho$. With the vertical lines we also show the expected values of $\rho_p$ as detailed in Table \ref{tab:1}.

\begin{figure}[h!]
    \begin{minipage}{0.4\textwidth}
    \captionsetup{width=.9\textwidth}
        \centering
        \includegraphics[width=\linewidth]{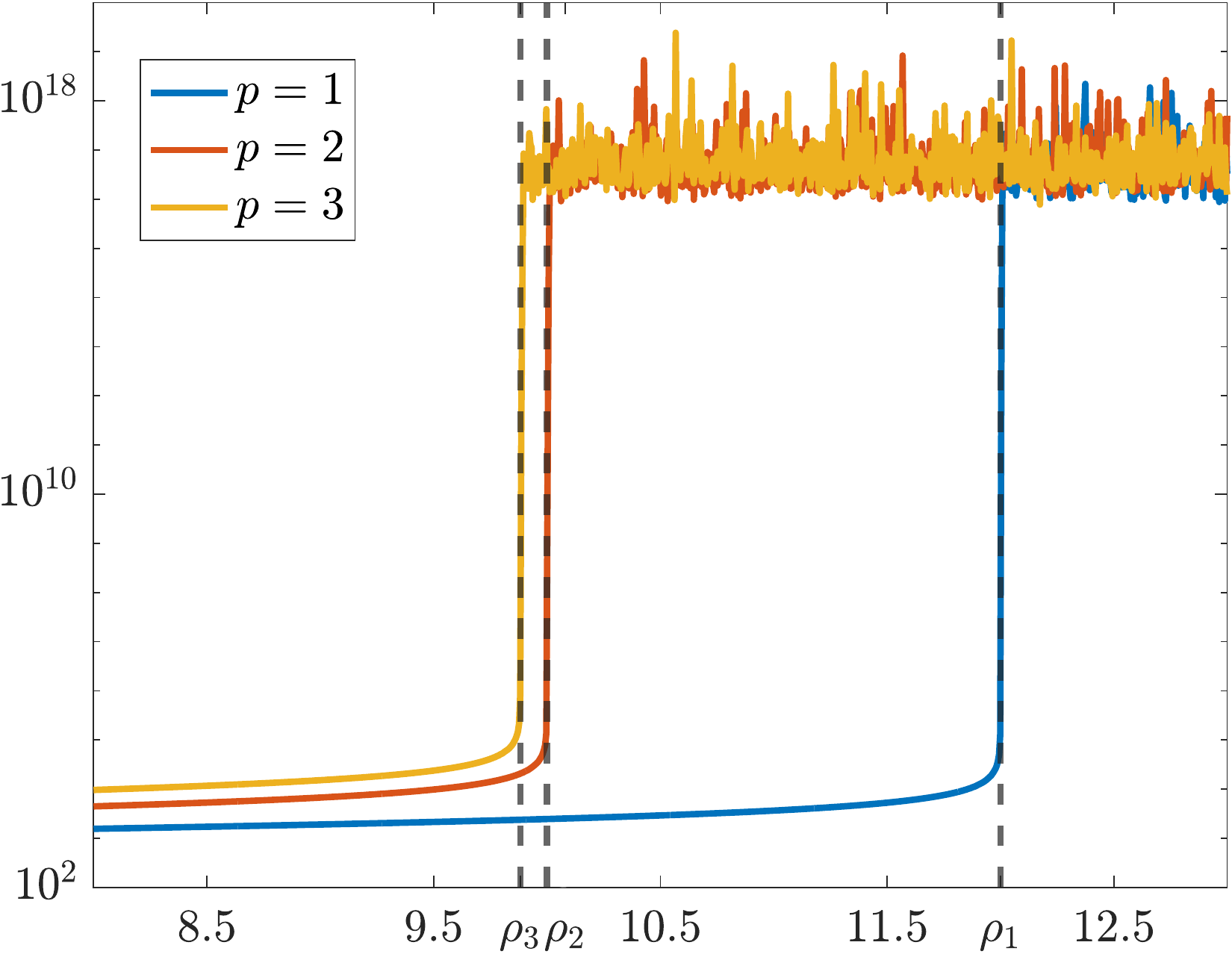}
        \caption{Spectral condition numbers $\kappa_2(\K^p_n(\rho,0))$ in semi-logarithmic scale, with $n=1000$ by varying $\rho \in [8,13]$, with $p \in \{1,2,3\}$.}
        \label{fig:4}
    \end{minipage}%
    \hfill
    \begin{minipage}{0.4\textwidth}
    \captionsetup{width=.9\textwidth}
        \centering
        \includegraphics[width=\linewidth]{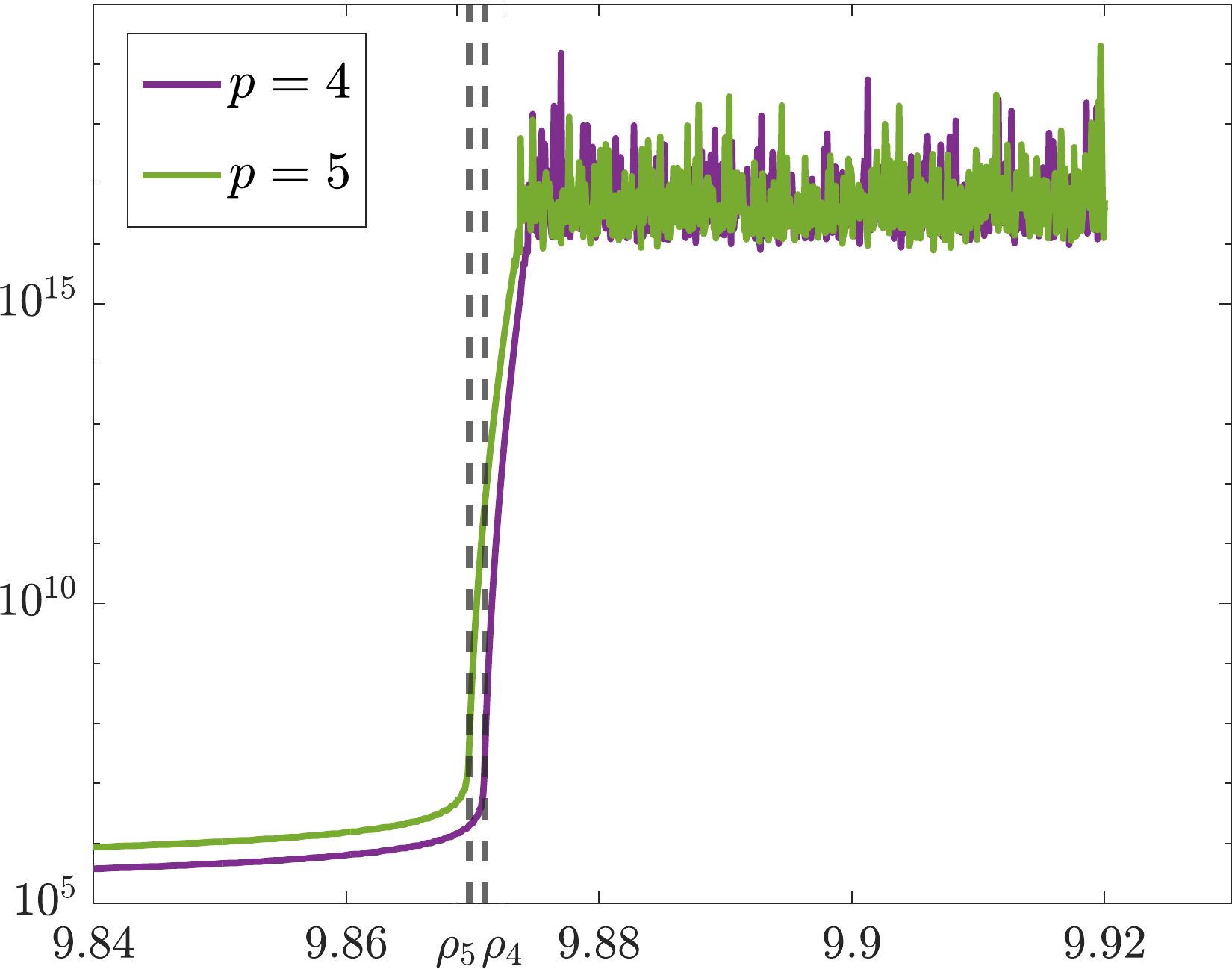}
        \caption{Spectral condition numbers $\kappa_2(\K^p_n(\rho,0))$ in semi-logarithmic scale, with $n=2000$ by varying $\rho \in [9.84,9.92]$, with $p \in \{4,5\}$.}
        \label{fig:5}
    \end{minipage}
\end{figure}

Similarly, we report in Figure \ref{fig:6} the condition numbers of the matrices $\K_{\mu,h}^p(0)$ defined in \eqref{eq:39} with $\mu = 10000$, $T=10$, and $p \in \{1,2,3\}$, by varying the parameter $h = T/N$. Recall that the CFL we expect is of the type $\mu h^2 < \rho_p$. Therefore, with the vertical lines we show the values of $\displaystyle{\sqrt{\frac{\rho_p}{\mu}}}$.

\begin{figure}[h!]
        \centering
        \includegraphics[width=0.5\linewidth]{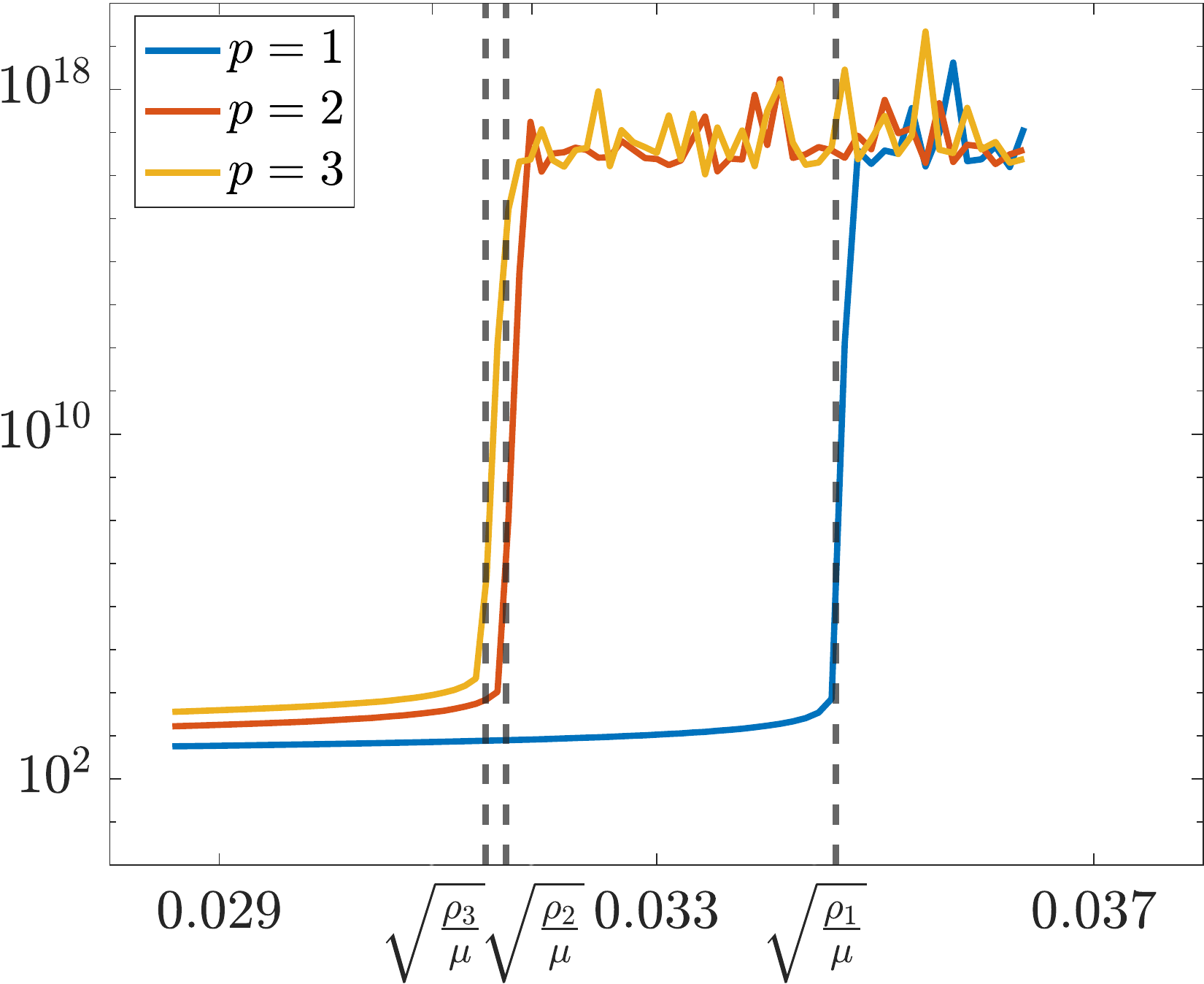}
        \caption{Spectral condition numbers $\kappa_2(\K^p_{\mu,h}(0))$ in semi-logarithmic scale, with $\mu=10000$ by varying $h  \in [0.028,0.036]$, with $p \in \{1,2,3\}$.}
        \label{fig:6}
\end{figure}

In Figure \ref{fig:7}, we show the condition numbers of the matrices $\K_n^{p}(\rho,-|\delta|)$ again with $n=1000$, $p \in \{1,2,3,4,5,6\}$, and $\rho = 20000$ by varying the absolute value $|\delta|$. We show with vertical lines the expected values $|\delta_p|$ as detailed in Table \ref{tab:1}.

\begin{figure}[h!]
    \centering
    \includegraphics[width=0.5\textwidth]{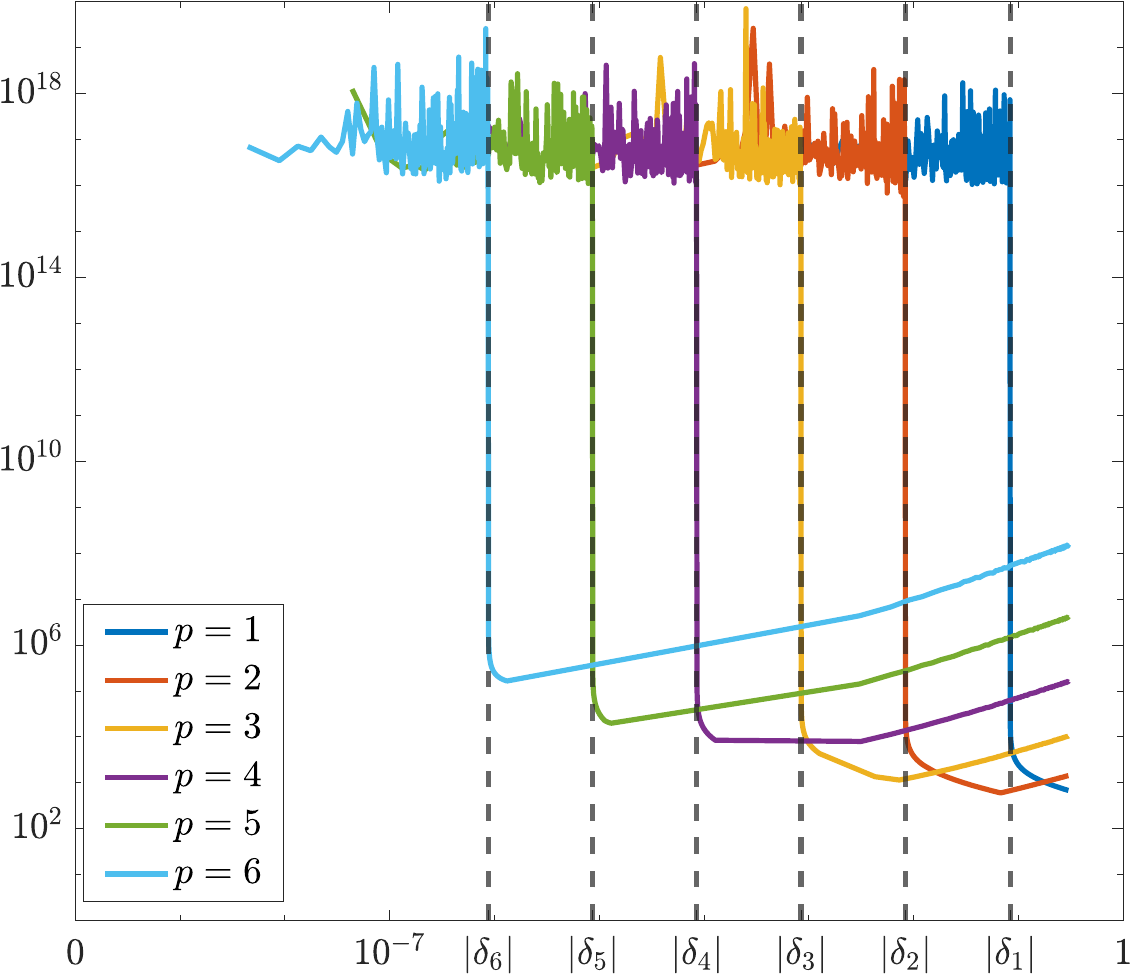}
\caption{Spectral condition numbers $\kappa_2(\K^p_n(\rho,-|\delta|))$ in logarithmic scale, with $n=1000$ and $\rho=20000$, by varying $|\delta| \in [10^{-8},0.3]$ and $p \in \{1,2,3,4,5,6\}$.}
\label{fig:7}
\end{figure}

In all cases, results of Theorem \ref{th:510} are seen to be sharp.

Finally, we report in Figures \ref{fig:8} and \ref{fig:9}, the condition numbers of the matrices $\K_{\mu,h}^p(0)$ and $\K_{\mu,h}^p(\delta_p)$ varying $h = T/N$, with $p \in \{1,2,3,4,5,6\}$, $\mu=1000$, $T=1$ and $N=2^j$, $j=7,\ldots,12$. We choose $h$ small enough to guarantee stability in both the unstabilised and stabilised cases. In both situations the condition numbers behave as $\mathcal{O}(h^2)$ and the differences between the two graphs are negligible.

\begin{figure}[h!]
    \begin{minipage}{0.4\textwidth}
    \captionsetup{width=.95\textwidth}
        \centering
        \includegraphics[width=\linewidth]{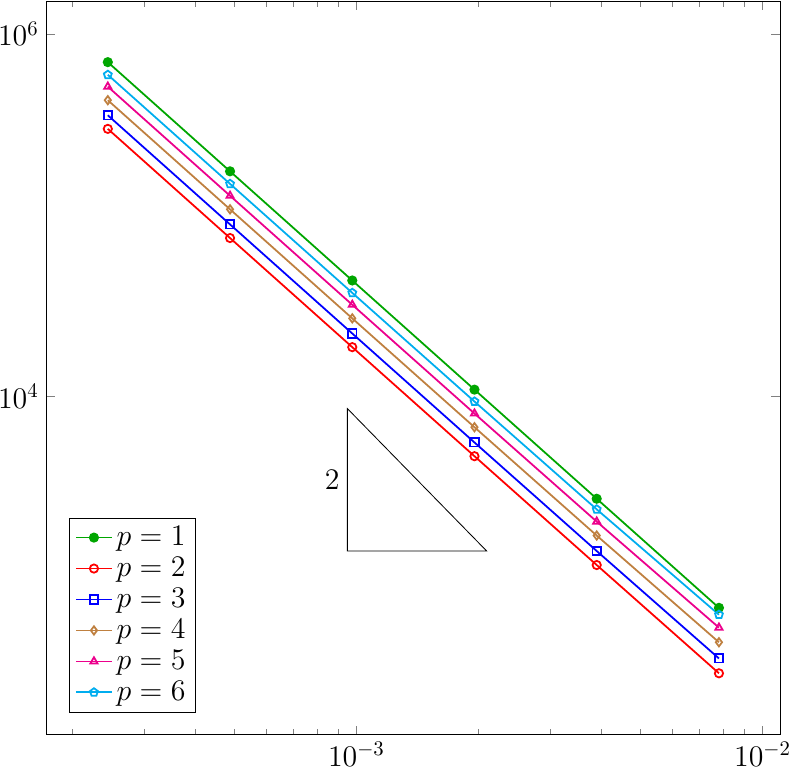}
        \caption{Spectral condition numbers $\kappa_2(\K_{\mu,h}^p(0))$ in logarithmic scale, with $\mu=10000$ and $p \in \{1,2,3,4,5,6\}$, by varying $h = T/N$ with $T=1$  and $N = 2^j$, $j=7,\ldots,12$.}
        \label{fig:8}
    \end{minipage}%
    \hfill
    \begin{minipage}{0.4\textwidth}
    \captionsetup{width=.95\textwidth}
        \centering
        \includegraphics[width=\linewidth]{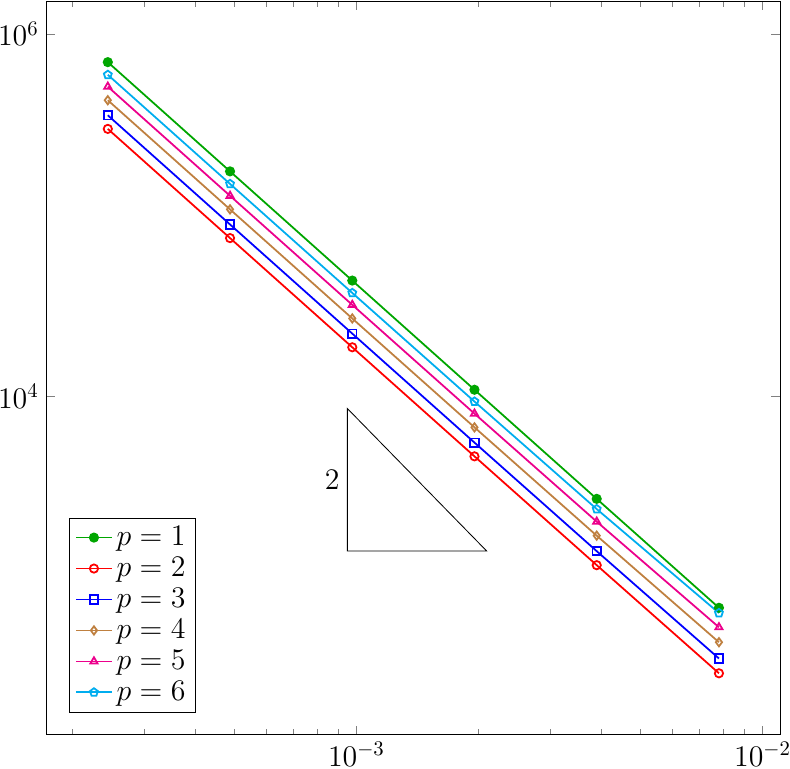}
        \caption{Spectral condition numbers $\kappa_2(\K_{\mu,h}^p(\delta_p))$ in logarithmic scale, with $\mu=10000$ and $p \in \{1,2,3,4,5,6\}$, by varying $h = T/N$ with $T=1$  and $N = 2^j$, $j=7,\ldots,12$.}
                \label{fig:9}
    \end{minipage}
\end{figure}

\section{Conclusion}
In this paper, we have examined the conditioning behaviour of a family of matrices related to conforming space--time discretizations of the wave equation, which uses splines of maximum regularity in time. We have provided a theoretical explanation of the results presented in \cite{FLMS2023}, and obtained precise estimates on two crucial parameters: the CFL condition for the unstabilized space--time bilinear form, and the optimal penalty parameter for the stabilized method. Our analysis is based on results from numerical linear algebra and properties of symbols associated with spline discretizations. Finally, we have shown how the stabilization term proposed in \cite{FLMS2023} is optimal from several points of view, and that the presented results are sharp.
\section*{Acknowledgment}
This research was supported by the Austrian Science Fund (FWF) project \href{https://doi.org/10.55776/F65}{10.55776/F65} (SF) and project \href{https://doi.org/10.55776/ P33477}{10.55776/P33477} (MF, SF). SF was also supported by the Vienna School of Mathematics. The authors would like to thank Ilaria Perugia for helpful discussions. In addition, the authors would like to thank the reviewers for their comments, which greatly improved the manuscript.

\begin{appendix}
\section{The proof of Theorem \ref{th:45}} \label{app:A}

The proof of Theorem \ref{th:45} is based on a generalization of \cite[Lemma 2]{AmodioBrugnano1996} and relies on the following lemma, which is essentially contained in \cite{AmodioBrugnano1996}.
\begin{lemma} \label{lem:A1}
Consider $\F_1, \F_2 \in \R^{k \times k}$, and $\C, \widehat{\C} \in \R^{n \times n}$ such that $\C - \widehat{\C}$ is non-zero only in the first $k$ columns. Assume the nonsingularity of the two block matrices
\begin{equation*}
    \E_{\C} = \begin{pmatrix}
    \E_{11} & \mathbf{0}_{k} \\
    \C & \E_{22}
    \end{pmatrix}_{(n+k) \times (n+k)} \quad \hspace{-2.2cm}, \quad \quad  \hspace{1.6cm} \E_{\widehat{\C}} = \begin{pmatrix}
    \E_{11} & \mathbf{0}_{k} \\
    \widehat{\C} & \E_{22}
    \end{pmatrix}_{(n+k) \times (n+k)} \hspace{-1.85cm},
\end{equation*}
with
\begin{equation*}
    \E_{11} =
    \begin{pmatrix}
        \F_1 & \mathbf{0}_{k,n-k}
    \end{pmatrix}_{k \times n}
    \quad  \quad \text{and} \quad \quad \quad 
    \E_{22} =
    \begin{pmatrix}
        \mathbf{0}_{n-k,k} \\ \F_2
    \end{pmatrix}_{n \times k}\hspace{-0.65cm},
\end{equation*}
where we denote $\mathbf{0}_{m_1,m_2} \in \R^{m_1 \times m_2}$ and $\mathbf{0}_{m_3} \in \R^{m_3 \times m_3}$ matrices with only zeros. Then, the following block decompositions apply
\begin{equation*}
    \E_{\C}^{-1} =
    \begin{pmatrix}
    \X_{11} & \X_{12} \\
    \X_{21} & \X_{22}
    \end{pmatrix}_{(n+k) \times (n+k)} \quad \hspace{-2.2cm}, \quad \hspace{1.6cm} \quad \quad 
    \E_{\widehat{\C}}^{-1} =
    \begin{pmatrix}
    \widehat{\X}_{11} & \X_{12} \\
    \widehat{\X}_{21} & \X_{22}
    \end{pmatrix}_{(n+k) \times (n+k)} \hspace{-1.8cm},
\end{equation*}
where $\X_{12} \in \R^{n \times n}$ and $\X_{21} \in \R^{k \times k}$. Moreover, $\C$ is nonsingular if and only if $\X_{21}$ is nonsingular, in which case it holds
\begin{equation} \label{eq:A1}
    \C^{-1} = \X_{12} - \X_{11} \X_{21}^{-1} \X_{22}.
\end{equation}
\end{lemma}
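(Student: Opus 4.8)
The plan is to derive all three assertions from blockwise expansions of the identities $\E_{\C}^{-1}\E_{\C} = I_{n+k}$ and $\E_{\C}\E_{\C}^{-1} = I_{n+k}$ (and their $\widehat{\C}$-counterparts), after one preliminary remark on the perturbation. First I would record that the nonsingularity of $\E_{\C}$ forces $\F_1$ and $\F_2$ to be invertible: a vector $w \in \R^k$ with $w^{\top}\F_1 = \mathbf{0}$ would yield $w^{\top}(\E_{11}\mid\mathbf{0}_k) = \mathbf{0}$, a nontrivial relation among the first $k$ rows of $\E_{\C}$, contradicting nonsingularity; symmetrically, a nonzero vector in the kernel of $\F_2$ produces a nontrivial relation among the last $k$ columns of $\E_{\C}$. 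The same applies to $\E_{\widehat{\C}}$, while the matrices $\F_1,\F_2$ are common to both.

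Next I would show that $\E_{\C}^{-1}$ and $\E_{\widehat{\C}}^{-1}$ agree in their last $n$ columns, which is exactly the claim that both inverses carry the same blocks $\X_{12}$ and $\X_{22}$; here one should keep in mind that, since $\E_{\C}$ has the rectangular diagonal blocks $\E_{11}\in\R^{k\times n}$ and $\E_{22}\in\R^{n\times k}$, its inverse is partitioned with $\X_{11}\in\R^{n\times k}$, $\X_{12}\in\R^{n\times n}$, $\X_{21}\in\R^{k\times k}$, $\X_{22}\in\R^{k\times n}$. Let $\Delta = \E_{\C}-\E_{\widehat{\C}}$, which by hypothesis is supported in its first $k$ columns. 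Fix $j \in \{k+1,\dots,n+k\}$ and let $x$ be the $j$-th column of $\E_{\C}^{-1}$, written in block form with top part $u \in \R^n$ and bottom part $v \in \R^k$, so $\E_{\C}x = \mathbf{e}_j$. Reading off the first $k$ rows gives $\F_1 (u_1,\dots,u_k)^{\top} = \mathbf{0}$, whence $u_1 = \dots = u_k = 0$ by invertibility of $\F_1$. Since $\Delta$ only involves the first $k$ entries of its argument, $\Delta x = \mathbf{0}$, so $\E_{\widehat{\C}}x = \E_{\C}x = \mathbf{e}_j$ and, by uniqueness, $x$ is also the $j$-th column of $\E_{\widehat{\C}}^{-1}$. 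This establishes the two block decompositions stated in the lemma.

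It then remains to prove \eqref{eq:A1} and the nonsingularity equivalence. Expanding $\E_{\C}^{-1}\E_{\C} = I$ and $\E_{\C}\E_{\C}^{-1} = I$ in blocks yields, in particular, $\X_{11}\E_{11} + \X_{12}\C = I_n$, $\ \X_{21}\E_{11} + \X_{22}\C = \mathbf{0}$, and $\ \C\X_{11} + \E_{22}\X_{21} = \mathbf{0}$. If $\X_{21}$ is nonsingular, the second relation gives $\E_{11} = -\X_{21}^{-1}\X_{22}\C$, and substituting into the first yields $(\X_{12} - \X_{11}\X_{21}^{-1}\X_{22})\C = I_n$; hence $\C$ is invertible with the inverse stated in \eqref{eq:A1}. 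Conversely, if $\C$ is nonsingular and $\X_{21}w = \mathbf{0}$ for some $w \in \R^k$, the third relation gives $\C\X_{11}w = \mathbf{0}$, hence $\X_{11}w = \mathbf{0}$; then applying $\E_{\C}^{-1}$ to the vector with top block $w$ and bottom block $\mathbf{0}\in\R^n$ produces the vector with blocks $\X_{11}w$ and $\X_{21}w$, which is zero, so $w=\mathbf{0}$ and $\X_{21}$ is nonsingular.

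The only real obstacle is keeping the block partitions straight: $\E_{\C}$ is assembled from non-square diagonal blocks, so $\E_{\C}^{-1}$ is partitioned in the ``transposed'' way, and for each of the matrix identities above one must pick correctly between the expansions of $\E_{\C}^{-1}\E_{\C}$ and $\E_{\C}\E_{\C}^{-1}$. No step is deep; the argument is the two-sided block version of \cite[Lemma 2]{AmodioBrugnano1996}, of which this lemma is the stated generalization.
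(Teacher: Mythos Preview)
Your proof is correct and follows the same route the paper indicates: the paper simply says the first claim is ``a direct calculation'' and attributes the second claim and \eqref{eq:A1} to \cite[Lemma~1]{AmodioBrugnano1996}, while you spell out both parts explicitly via the block identities for $\E_{\C}^{-1}\E_{\C}$ and $\E_{\C}\E_{\C}^{-1}$. One minor bibliographic point: the formula \eqref{eq:A1} and the nonsingularity equivalence correspond to Lemma~1 (not Lemma~2) of \cite{AmodioBrugnano1996}.
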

\begin{proof}
The first part is obtained from a direct calculation, while the second is exactly \cite[Lemma 1]{AmodioBrugnano1996}.
\end{proof}
\noindent \textit{Proof of Theorem \ref{th:45}.} The proof consists of four steps. We show in sequence that the following five matrix families have the same conditioning behaviour
\begin{equation*}
    \{\C_n\}_n = \{\C_n^{(0)}\}_n \to \{\C_n^{(1)}\}_n \to \{\C_n^{(2)}\}_n \to \{\C_n^{(3)}\}_n \to \{\C_n^{(4)}\}_n = \{\widehat{\C}_n\}_n,
\end{equation*}
where:
\begin{itemize}
\item $\{\C_n^{(1)}\}_n$ is the family of matrices differing from those in $\{\C_n^{(0)}\}_n$ only for the block of size $(m+k) \times k$ on the top-left, i.e.,
\vspace{0.5cm}
\begin{equation} \label{eq:A2}
    \C_n^{(1)} =
    \left(
    \begin{array}{c|c}
        \begin{matrix}
        \coolover{k}{\circleasterisk & \ldots & \circleasterisk} & \coolover{m}{\c_k & \phantom{**} & \phantom{***}} \\
        \vdots &  & \vdots & \vdots & \ddots \\
        \circleasterisk & \ldots & \circleasterisk & \c_{k-m+1} & \ldots & \c_k \\
        * & \ldots & * & & \\
        & \ddots & \vdots & & \\
        & & * & & 
        \end{matrix}
        &
        \begin{matrix}
        \\ & & \\
        \\ \hspace{-2.5cm} \c_k \\
        & \hspace{-1.5cm} \ddots \\
        \end{matrix}
        \\
        \hline
        \begin{matrix}
        & & & \hspace{0.7cm} \c_{-m} \\
        & & & & \hspace{0.1cm} \ddots \\
        \\ \\ &
        \end{matrix}
        &
        \begin{matrix}
        \c_0 & \ldots & \c_k & & \\
        \vdots & \ddots & & \ddots \\
        \c_{-m} & & \ddots & & \c_k \\
        & \ddots & & \ddots & \vdots \\
        &  & \c_{-m} & \ldots & \c_0 \\
        \end{matrix}
    \end{array}
    \right)_{n \times n} \hspace{-0.65cm},
\end{equation}
with the same entries of $\widehat{\C}_n$ in the top-left sub-block of size $k \times k$ from the $(m+1)^{\thh}$ to the $(m+k)^{\thh}$ rows, and whose other entries in this block (the circled ones) are such that each $(\C_n^{(1)})^{-1}$ exists for $n$ sufficiently large (we will comment in Remark \ref{rem:A5} why this is always possible),
\item $\{\C_n^{(2)}\}_n$ is the family of matrices differing from those in $\{\C_n^{(0)}\}_n$ for the whole top-left block perturbation as depicted in \eqref{eq:43}-left, i.e.,
\vspace{0.5cm}
\begin{equation*} 
    \C_n^{(2)} = 
    \left(
    \begin{array}{c|c}
        \begin{matrix}
        \coolover{m+k}{* & \ldots & * \hspace{0.2cm} & * & \phantom{**} & \phantom{**}} \\
        \vdots &  & \vdots & \vdots & \ddots & \\
        * & \ldots & * & * & \ldots & * \\
        * & \ldots & * & & & \\
        & \ddots & \vdots & & & \\
        & & * & & & 
        \end{matrix}
        &
        \begin{matrix}
        \\ & & \\
        \\ \hspace{-2.5cm} \c_k \\
        & \hspace{-1.5cm} \ddots \\
        \end{matrix}
        \\
        \hline
        \begin{matrix}
        & & & \hspace{1.2cm} \c_{-m} \\
        & & & & \hspace{-0.1cm} \ddots \\
        \\ \\ &
        \end{matrix}
        &
        \begin{matrix}
        \c_0 & \ldots & \c_k & & \\
        \vdots & \ddots & & \ddots \\
        \c_{-m} & & \ddots & & \c_k \\
        & \ddots & & \ddots & \vdots \\
        & & \c_{-m} & \ldots & \c_0 \\
        \end{matrix}
    \end{array}
    \right)_{n \times n} \hspace{-0.65cm},
\end{equation*}
with the same entries of $\widehat\C_n$,
\item $\{\C_n^{(3)}\}_n$ is the family of matrices differing from those in $\{\C_n^{(2)}\}_n$ only for the block of size $k \times (m+k)$ on the bottom-right, i.e.,
\vspace{0.5cm}
\begin{equation*}
    \C_n^{(3)} \hspace{-0.05cm} = 
    \hspace{-0.1cm}
    \left(
    \begin{array}{c|c|c}
        \hspace{-0.1cm}
        \begin{matrix}
            \coolover{m+k}{*  & \ldots & * \hspace{0.2cm}& * & \phantom{**} & \phantom{**}} \\
            \vdots & & \vdots & \vdots & \ddots & \\
            * & \ldots & * & * & \ldots & * \\
            * & \ldots & * & & & \\
            & \ddots & \vdots & & & \\
            & & * & & & 
        \end{matrix}
        &
        \begin{matrix}
            \\ & & \\
            \\ \hspace{-0.3cm} \c_k \\
            & \hspace{-0.3cm}\ddots \\
        \end{matrix}
        &
        \makebox[\wd0]{\large \phantom{1}} \\
        \hline
        \begin{matrix}
            && & \hspace{0.7cm} \c_{-m} \\
            & & & & \hspace{-0.2cm} \ddots \\
            & &
        \end{matrix}
        &
        \begin{matrix}
            \c_0 & & \hspace{0.1cm} \ddots \\
            & \ddots & \\
            \hspace{-0.1cm} \ddots & & \c_0
        \end{matrix}
        &
        \begin{matrix}
            \\ \hspace{-1.8cm} \ddots & & & \\
            & \hspace{-0.6cm} \c_k &
        \end{matrix}
        \\
        \hline
        \makebox[\wd0]{\large \phantom{1}}
        &
        \begin{matrix}
            & \hspace{0.5cm}\ddots & \\
            & & \hspace{-0.3cm} \c_{-m} \\
            & & \\
            & \\
            &
        \end{matrix}
        &
        \begin{matrix}
            & & & \c_k \\
            & & & \vdots & \ddots \\
            & & & \c_{k-m+1} & \ldots & \c_k \\
            * & \ldots & * & \circleasterisk & \ldots & \circleasterisk \\
            & \ddots & \vdots & \vdots & & \vdots \\
            & & * & \circleasterisk & \ldots & \circleasterisk
        \end{matrix}
    \end{array}
    \right)
    \hspace{-0.2cm}
    \begin{matrix}
    \phantom{\coolrightbrace{* \\  \\ \\ \\ \\ \\ \\  \vspace{0.15cm} \\ \\ \\ \\ \\  * }{k}} 
        \\ \coolrightbrace{* \\ \vspace{0.15cm} \\ * }{m} \vspace{0.05cm} \\ 
        \coolrightbrace{* \\ \vspace{0.1cm} \\ * }{k}
        \vspace{0.1cm}
    \end{matrix}
    \hspace{-0.5cm},
\end{equation*}
with the same entries of $\widehat{\C}_n$ in the bottom-right sub-block of size $k \times k$ from the $(n-m-k)^{\thh}$ to the $(n-m-1)^{\thh}$ columns, and whose other entries in this block (the circled ones) are such that each $(\C_n^{(3)})^{-1}$ exists for $n$ sufficiently large.
\end{itemize}
At each step $j=1,\ldots,4$ we show that
\begin{equation} \label{eq:A3}
    (\C_n^{(j)})^{-1} = (\C_n^{(j-1)})^{-1} + \H_n^{(j)},
\end{equation}
for four matrices $\H_n^{(j)} \in \R^{n \times n}$ such that
\begin{equation} \label{eq:A4}
    \| \H_n^{(j)}\|_{1} = \mathcal{O}\bigl(\| (\C_n^{(j-1)})^{-1}\|_{1}\bigr) \quad \quad j = 1,\ldots,4     
\end{equation}
for $n$ sufficiently large. Combining the four steps we deduce
\begin{equation} \label{eq:A5}
    \widehat{\C}_n^{-1} = \C^{-1}_n + \H_n^{(1)} + \H_n^{(2)} + \H_n^{(3)} + \H_n^{(4)},
\end{equation}
and the same conditioning behaviour for the families $\{ \C_n \}_n$ and $\{\widehat{\C}_n \}_n$.

Let $z_1,\ldots,z_{m+k}$ be the zeros of the polynomial $q^{\C}$ associated with the family $\{\C_n\}_n$, defined in \eqref{eq:42}, ordered such that $0 < | z_1 | \le \ldots \le | z_{m+k} |$. Associated with these zeros we define the matrix
\begin{equation} \label{eq:A6}
    \bZ =
    \begin{pmatrix}
        \bZ_1 &
        \\  & \bZ_2
    \end{pmatrix}_{(m+k) \times (m+k)} \hspace{-2cm},
\end{equation}
with
\begin{equation*}
    \bZ_1 =
    \begin{pmatrix}
        z_1 &
        \\ & \ddots 
        \\ & & z_m 
    \end{pmatrix}_{m \times m} \hspace{-0.85cm}, \quad \quad \quad
    \bZ_2 =
    \begin{pmatrix}
        z_{m+1} &
        \\ & \ddots 
        \\ & & z_{m+k} 
    \end{pmatrix}_{k \times k} \hspace{-0.65cm},
\end{equation*}
and the Casorati matrix $\W$ (see, e.g. \cite[Section 1.1]{AmodioBrugnano1996} and \cite[Section 2.1]{LakshmikanthamTrigiante1988} for precise definitions) partitioned as
\begin{equation*}
    \W =
    \begin{pmatrix} 
        \W_{11} & \W_{12}
        \\ \W_{21} & \W_{22}
    \end{pmatrix}_{(m+k) \times (m+k)} \hspace{-2cm},
\end{equation*}
with $\W_{12} \in \R^{k \times k}$ and $\W_{21} \in \R^{m \times m}$.

Suppose from now on the simplest case in which $q^{\C}$ is of the type $(m_1,m_2,k)$ and all its zeros are simple. In the case where $q^{\C}$ is of type $(m,k_1,k_2)$, it is sufficient to consider the families $\{\C_n^{\top}\}_n$ and $\{\widehat{\C}_n^{\top}\}_n$, while in case of non-weakly well-conditioning, or multiple zeros, proofs of \eqref{eq:A3} and \eqref{eq:A4} can be obtained by similar arguments.

\noindent\textbf{Step 1.}  Define the lower triangular block matrices
\begin{equation} \label{eq:A7}
    \E_{\C_n^{(0)}} = \begin{pmatrix}
    \E_{11} & \mathbf{0}_{k} \vspace{0.05cm} \\
    \C_n^{(0)} & \E_{22}
    \end{pmatrix}_{(n+k) \times (n+k)} \quad \hspace{-2.2cm}, \quad \quad  \hspace{1.6cm} \E_{\C_n^{(1)}} = \begin{pmatrix}
    \E_{11} & \mathbf{0}_{k} \vspace{0.05cm} \\
    \C_n^{(1)} & \E_{22}
    \end{pmatrix}_{(n+k) \times (n+k)} \hspace{-1.85cm},
\end{equation}
with
\begin{equation*}
    \E_{11} =
    \begin{pmatrix}
        \mathbf{I}_k & \mathbf{0}_{k,n-k}
    \end{pmatrix}_{k \times n}
    \quad \quad  \text{and} \quad \quad 
    \E_{22} = 
    \left(
    \begin{array}{c}
        \mathbf{0}_{n-k,k} \\[0.15cm] \hline
        \begin{matrix}
            c_k &  \\
            \vdots & \ddots  \\
            c_1 & \ldots & c_k
        \end{matrix}
    \end{array}
    \right)_{n \times k}
    \hspace{-0.65cm},
\end{equation*}
where $\mathbf{I}_k \in \R^{k \times k}$ is the identity matrix. The matrices $\E_{\C_n^{(0)}}$ and $\E_{\C_n^{(1)}}$ are nonsingular since the elements on their diagonals are not zero. Then, by virtue of Lemma \ref{lem:A1}, the following decompositions are true
\begin{equation*}
    \E_{\C_n^{(0)}}^{-1} =
    \begin{pmatrix}
    \X_{11}^{(0)} & \X_{12} \vspace{0.1cm} \\
    \X_{21}^{(0)} & \X_{22}
    \end{pmatrix}_{(n+k) \times (n+k)} \quad \hspace{-2.2cm}, \quad \hspace{1.6cm} \quad 
    \E_{\C_n^{(1)}}^{-1} =
    \begin{pmatrix}
    \X_{11}^{(1)} & \X_{12} \vspace{0.1cm} \\
    \X_{21}^{(1)} & \X_{22}
    \end{pmatrix}_{(n+k) \times (n+k)} \hspace{-1.85cm},
\end{equation*}
and with \eqref{eq:A1}, we compute
\begin{equation} \label{eq:A8}
    \H_n^{(1)} = (\C_n^{(1)})^{-1} - (\C_n^{(0)})^{-1} = \bigl(\X_{11}^{(0)} (\X_{21}^{(0)})^{-1}-\X_{11}^{(1)} (\X_{21}^{(1)})^{-1} \bigr) \X_{22}.
\end{equation}
We now show that, for $n$ sufficiently large,
\begin{equation*}
    \H_n^{(1)}[\ell,j] =  \mathcal{O}\left(\frac{|z_m|^\ell}{|z_{m+1}|^j}\right), \quad \ell = k+1,\ldots,n \quad \text{and} \quad j = 1,\ldots, n.
\end{equation*}
The first $k$ columns of $\E^{-1}_{\C_n^{(0)}}$ and $\E^{-1}_{\C_n^{(1)}}$ are obtained by solving the block linear systems
\begin{equation*}
    \E_{\C_n^{(0)}} 
    \begin{pmatrix}
        \X_{11}^{(0)} \vspace{0.1cm} \\ \X_{21}^{(0)}
    \end{pmatrix}
    = 
    \begin{pmatrix}
        \textbf{I}_k
        \\
        \mathbf{0}_{n,k}
    \end{pmatrix}, \quad \quad \quad
    \E_{\C_n^{(1)}} 
    \begin{pmatrix}
        \X_{11}^{(1)}
        \vspace{0.1cm} \\ \X_{21}^{(1)}
    \end{pmatrix}
    = 
    \begin{pmatrix}
        \textbf{I}_k
        \\
        \mathbf{0}_{n,k}
    \end{pmatrix}.
\end{equation*}
Exactly as in \cite[Equation (21)]{AmodioBrugnano1996} it is possible to show that the $\ell^{\thh}$ row of $\X_{11}^{(0)}$ satisfies the relation
\begin{equation} \label{eq:A9}
\begin{aligned}
    \begin{pmatrix}
    \X_{11}^{(0)}[\ell,1], & \ldots \, , & \X_{11}^{(0)}[\ell,k] 
    \end{pmatrix}^\top & = 
    \mathbf{e}_\ell^\top \X_{11}^{(0)} = \mathbf{1}_{m+k}^{\top} \bZ^{\ell-1}
    \begin{pmatrix}
        \G_1^{(0)} \vspace{0.1cm} \\
        \G_2^{(0)}
    \end{pmatrix}, \quad \ell = 1, \ldots, n
\end{aligned}    
\end{equation}
where $\mathbf{e}_\ell$ is the $\ell^{\thh}$ vector of the canonical base of $\R^n$, $\mathbf{1}_{m+k} = (1,\ldots,1)^{\top} \in \R^{m+k}$, $\bZ$ is defined in \eqref{eq:A6}, and the matrices $\G_1^{(0)} \in \R^{m \times k}$ and $\G_2^{(0)} \in \R^{k \times k}$ are defined by
\begin{equation*}
    \begin{pmatrix}
        \G_1^{(0)} \vspace{0.1cm} \\
        \G_2^{(0)}
    \end{pmatrix}
    = \W^{-1} \X_{11}^{(0)}[1:m+k,1:k], 
\end{equation*}
with $\W$ the Casorati matrix, and $\X_{11}^{(0)}[1:m+k,1:k]$ the $(m+k) \times k$ top-left block of $\X_{11}^{(0)}$ in standard indices notation. A similar expression is obtained for the $\ell^{\thh}$ row of $\X_{11}^{(1)}$ with the difference that the top-left block of $\C_n^{(1)}$ in \eqref{eq:A2} must be also considered, from which the analogue of \eqref{eq:A9} only holds for $\ell \ge k+1$. With greater details, we obtain
\begin{equation*}
    \mathbf{e}_\ell^\top \X_{11}^{(1)} = \mathbf{1}_{m+k}^{\top} \bZ^{\ell-1-k}
    \begin{pmatrix}
        \G_1^{(1)} \vspace{0.1cm} \\
        \G_2^{(1)}
    \end{pmatrix}, \quad \ell = k+1,\ldots, n
\end{equation*}
with the matrices $\G_1^{(1)} \in \R^{m \times k}$ and $\G_2^{(1)} \in \R^{k \times k}$ defined by
\begin{equation} \label{eq:A10}
    \begin{pmatrix}
        \G_1^{(1)} \vspace{0.1cm} \\
        \G_2^{(1)}
    \end{pmatrix}
    = \W^{-1} \X_{11}^{(1)}[k+1:m+2k,1:k].
\end{equation}
Note that the block $\X_{11}^{(1)}[k+1:m+2k,1:k]$ depends on the top-left block perturbation of size $(m+k) \times k$ of $\C_n^{(1)}$.

For $n$ sufficient large, similarly, we compute
\begin{equation} \label{eq:A11}
    \begin{aligned}
        \X_{21}^{(0)} & =
        \begin{pmatrix}
            \W_{11} & \W_{12}
        \end{pmatrix}
        \bZ^n
        \begin{pmatrix}
            \G_1^{(0)} \vspace{0.1cm} \\
            \G_2^{(0)}
        \end{pmatrix}
     = \W_{12} \bZ_2^n \G_2^{(0)} + \mathcal{O}\left(\left|\frac{z_m}{z_{m+1}}\right|^n\right),
    \\ 
        \X_{21}^{(1)} & =
        \begin{pmatrix}
            \W_{11} & \W_{12}
        \end{pmatrix}
        \bZ^{n-k}
        \begin{pmatrix}
            \G_1^{(1)} \vspace{0.1cm} \\
            \G_2^{(1)}
        \end{pmatrix}
     = \W_{12} \bZ_2^{n-k} \G_2^{(1)} + \mathcal{O}\left(\left|\frac{z_m}{z_{m+1}}\right|^{n-k}\right).
    \end{aligned}
\end{equation}
Since it is assumed that $\C_n^{(0)}$ and $\C_n^{(1)}$ are nonsingular, for sufficiently large $n$, by Lemma \ref{lem:A1} also $\G_2^{(1)}$ and $\G_2^{(0)}$ are.

Finally, $\X_{22}$ has been calculated in \cite[Equation (25)]{AmodioBrugnano1996}, and it holds true
\begin{equation} \label{eq:A12}
    \X_{22} \mathbf{e}_j =
    \begin{pmatrix}
        \W_{11} & \W_{12}
    \end{pmatrix}
    \bZ^{n+m-j}
    \begin{pmatrix}
        \G_1^{(\cdot)} \vspace{0.1cm} \\
        \G_2^{(\cdot)}
    \end{pmatrix}, \quad j = 1,\ldots,n
\end{equation}
with the vectors $\G_1^{(\cdot)}$ and $\G_2^{(\cdot)}$ not depending on $j$, defined as
\begin{equation*}
    \begin{pmatrix}
        \G_1^{(\cdot)} \vspace{0.1cm} \\
        \G_2^{(\cdot)}
    \end{pmatrix}
    = 
    \W^{-1}
    \begin{pmatrix}
        0 \\ \vdots \\ 0 \\ \c_k^{-1}
    \end{pmatrix}.
\end{equation*}
Combining \eqref{eq:A9}-\eqref{eq:A12}, for $\ell = k+1,\ldots,n$ and $j=1,\ldots,n$ we deduce from \eqref{eq:A8}, for $n$ sufficiently large,
\begin{align*}
    \H_n^{(1)}[\ell,j] & = \mathbf{e}_\ell^\top \bigl(\X_{11}^{(0)} (\X_{21}^{(0)})^{-1}-\X_{11}^{(1)} (\X_{21}^{(1)})^{-1} \bigr) \X_{22} \mathbf{e}_j 
    \\ & \hspace{-1cm} \approx \mathbf{1}_{m+k}^\top \bZ^{\ell-1}
            \left(
            \begin{pmatrix}
            \G_1^{(0)} \\
            \G_2^{(0)}
            \end{pmatrix}
            (\G_2^{(0)})^{-1}
            - \bZ^{-k}
            \begin{pmatrix}
            \G_1^{(1)} \\
            \G_2^{(1)}
            \end{pmatrix}
            (\G_2^{(1)})^{-1} \bZ_2^k
            \right) \bZ_2^{-n} \W_{12}^{-1} \X_{22} \mathbf{e}_j
    \\ & \hspace{-1cm} = \mathbf{1}_{m+k}^\top \bZ_1^{\ell-1}
            \left( 
            \G_1^{(0)} (\G_2^{(0)})^{-1}
            - \bZ_1^{-k} \G_1^{(1)} (\G_2^{(1)})^{-1} \bZ_2^k
            \right) \bZ_2^{-n} \W_{12}^{-1} \X_{22} \mathbf{e}_j
    \\ & \hspace{-1cm} \approx \mathbf{1}_{m+k}^\top \bZ_1^{\ell-1}
            \left(
            \G_1^{(0)} (\G_2^{(0)})^{-1} 
            - \bZ_1^{-k} \G_1^{(1)} (\G_2^{(1)})^{-1} \bZ_2^k
            \right) \bZ_2^{m-j} \G_2^{(\cdot)}
    \\ & \hspace{-1cm} = \mathcal{O} \left( \frac{|z_m|^\ell}{|z_{m+1}|^j} \right).
\end{align*}
From the latter, we deduce that the families $\{\C_n^{(0)}\}_n$ and $\{\C_n^{(1)}\}_n$ have the same conditioning behaviour.

\noindent\textbf{Step 2.} Consider the families of matrices $\{(\C_n^{(1)})^\top\}_n$ and $\{(\C_n^{(2)})^\top\}_n$. Since each difference $(\C_n^{(2)})^\top - (\C_n^{(1)})^\top$ has the same structure as each $\C_n^{(1)} - \C_n^{(0)}$, it is possible to proceed exactly as in \textbf{Step 1.} and bound component-wise the elements of the matrix $\H_n^{(2)}$. Note that the polynomial $q^{\C^\top}$ associated with the purely Toeplitz part of the family $\{(\C_n^{(1)})^\top\}_n$ has as roots the reciprocals of the roots of $q^{\C}$, while $m$ and $k$ now have an inverted role. Therefore, if $q^{\C}$ is of type $(m_1,m_2,k)$, then $q^{\C^\top}$ is of type $(k,m_2,m_1)$. Taking these facts into account, we deduce that, for $n$ sufficiently large,
\begin{equation*}
    \H_n^{(2)}[\ell,j] =  \mathcal{O}\left(\frac{|z_{n-k+1}|^{-j}}{|z_{n-k}|^{-\ell}}\right) = \mathcal{O}\left(\frac{|z_m|^\ell}{|z_{m+1}|^j}\right), 
\end{equation*}
for $\ell = m+1,\ldots,n$ and $j = k+1,\ldots, n$. As in the previous step, we conclude that the families $\{ \C_n^{(1)}\}_n$ and $\{ \C_n^{(2)}\}_n$ have the same conditioning behaviour.

\noindent\textbf{Step 3.} Let us define $\textbf{J}_n$ the exchange matrix of size $n \times n$, i.e.,
\begin{equation} \label{eq:A13}
    \textbf{J}_n = \begin{pmatrix}
    0 & \\
    1 & 0  \\
    0 & 1 & 0 \\
    \vdots &\ddots &\ddots & \ddots \\
    0 &\ldots& 0 & 1 & 0
    \end{pmatrix}_{n \times n} \hspace{-0.65cm}.
\end{equation}
Consider the families $\{\textbf{J}_n (\C_n^{(2)})^\top \textbf{J}_n\}_n$ and $\{\textbf{J}_n (\C_n^{(3)})^\top \textbf{J}_n \}_n$. Given a nonsingular matrix $\C \in \R^{n \times n}$, it holds valid $(\textbf{J}_n \C \textbf{J}_n)^{-1} = \textbf{J}_n \C^{-1} \textbf{J}_n$. Therefore, the conditioning behaviour of the families just defined is equal to that of $\{\C_n^{(2)}\}_n$ and $\{\C_n^{(3)}\}_n$, respectively. As in \textbf{Step 2.}, each difference $\textbf{J}_n (\C_n^{(3)})^\top \textbf{J}_n - \textbf{J}_n (\C_n^{(2)})^\top \textbf{J}_n$ has the same structure as each $\C_n^{(1)} - \C_n^{(0)}$, and it is possible to proceed as in \textbf{Step 1.} to bound component-wise the elements of $\H_n^{(3)}$. Note that in this case the polynomial $q^{\textbf{J} \C^\top \textbf{J}}$ associated with the Toeplitz part of the family $\{\textbf{J}_n (\C_n^{(2)})^\top \textbf{J}_n\}_n$ satisfies $q^{\textbf{J} \C^\top \textbf{J}} = q^{\C}$. We deduce, for $n$ sufficiently large,
\begin{equation*}
    (\textbf{J}_n (\H_n^{(3)})^\top \textbf{J}_n) [\ell,j] = \mathcal{O}\left(\frac{|z_m|^\ell}{|z_{m+1}|^j}\right), \quad \ell = k+1,\ldots,n \quad \text{and} \quad j = m+1,\ldots, n,
\end{equation*}
or equivalently
\begin{equation*}
    \H_n^{(3)} [\ell,j] = \mathcal{O}\left(\frac{|z_m|^{n-j+1}}{|z_{m+1}|^{n-\ell+1}}\right), \quad \ell = k+1,\ldots,n \quad \text{and} \quad j = m+1,\ldots, n.
\end{equation*}
We deduce the same conditioning behaviour for the families $\{ \C_n^{(2)}\}_n$ and $\{ \C_n^{(3)}\}_n$.

\noindent\textbf{Step 4.} Finally, we consider the families $\{\textbf{J}_n (\C_n^{(3)}) \textbf{J}_n\}_n$ and $\{\textbf{J}_n (\C_n^{(4)}) \textbf{J}_n \}_n$. Proceeding as in \textbf{Steps 2.} and \textbf{3.}, with estimates similar as in \textbf{Step 1.}, we obtain
\begin{equation*}
    \H_n^{(4)} [\ell,j] = \mathcal{O}\left(\frac{|z_m|^{n-j+1}}{|z_{m+1}|^{n-\ell+1}}\right), \quad \ell = m+1,\ldots,n \quad \text{and} \quad j = k+1,\ldots, n.
\end{equation*}
Combining the four steps concludes the proof.
\hfill \qedsymbol{}
\begin{remark}
Note that in Theorem \ref{th:45}, it is actually sufficient to assume that at least three of the four outer codiagonals of the two perturbed blocks have entries all different from zero.
\end{remark}
\begin{remark} \label{rem:A3}
We observe that, by a similar proof, it can be shown that Theorem \ref{th:45} also holds if instead of a perturbation of type \eqref{eq:43}, in top-left and bottom-right corners, there are two perturbation blocks of size $M_1 \times N_1$ and $M_2 \times N_2$, respectively, of type
\begin{equation*}
    \vphantom{
    \begin{matrix}
            \overbrace{XYZ}^{\mbox{$R$}} \\ \\ \\ \\ \\
            \underbrace{pqr}_{\mbox{$S$}} \\
        \end{matrix}}
    \begin{matrix}
        \coolleftbrace{M_1}{* \\ \vspace{0.15cm} \\ * \\  
        * \\ \vspace{0.15cm} \\ *}
    \end{matrix}%
    \hspace{-0.15cm}
    \begin{pmatrix}
        \coolover{N_1}{*     & \ldots &     * & * & \ldots  & * }
        \\        \vdots &        & \vdots               &         \vdots &            & \vdots
        \\         *     & \ldots &     *                &              * &  \ldots          &  *           
        \\         *     & \ldots &     *                &                &                  &                            
        \\ \vdots              &  & \vdots               &                &                  & 
        \\ \coolunder{k}{*  & \ldots &     * } & \phantom{*} & \phantom{\ldots} & \phantom{*}
        \end{pmatrix}
        \hspace{-0.15cm}
        \begin{matrix}
        \coolrightbrace{* \\ \vspace{0.15cm} \\ *}{m}
        \\\phantom{\coolrightbrace{* \\ \vspace{0.15cm} \\ *}{m}}
    \end{matrix} \hspace{-0.25cm},  \quad \quad  \quad %
         \vphantom{
    \begin{matrix}
            \overbrace{XYZ}^{\mbox{$R$}}\\ \\ \\ \\ \\ \\ \\ \\
            \underbrace{pqr}_{\mbox{$S$}} \\
        \end{matrix}}
        \hspace{-0.5cm}
        \begin{matrix}
        \phantom{\coolleftbrace{N_2}{* \\ \vspace{0.15cm} \\ *}}\vspace{-1.4cm}
        \coolleftbrace{k}{* \\ \vspace{0.15cm} \\ *} 
    \end{matrix}%
    \hspace{-0.15cm}
    \begin{pmatrix}
         &        &                &  \coolover{m}{      *  &     \ldots       & *}
        \\              &  &                     &   \vdots            &            & \vdots            
        \\              &                &                  & *     & \ldots &     *                             
        \\             *  & \ldots & *   &       *     &    \ldots            &                 *  
        \\              \vdots &   & \vdots                    &   \vdots             &              &  \vdots
        \\ \coolunder{N_2}{ *  & \ldots &     *  & * & \ldots & *} 
        \end{pmatrix}
        \hspace{-0.1cm}
    \begin{matrix}
        \coolrightbrace{* \\ \vspace{0.15cm} \\ * \\ * \\ * 
        * \\ \vspace{0.15cm} \\ * }{M_2} \vspace{0.1cm}
    \end{matrix}
    \hspace{0.1cm}.
\end{equation*}
Here, $N_1, M_1, N_1, N_2$ are independent of $n$, and at least three of the four sub-blocks of the perturbed matrices in positions $[1:m,$ $k+1:m+k]$, $[m+1:m+k$, $1:k]$, $[n-k-m+1:n-k$, $n-m+1:n]$ and $[n-k+1:n$, $n-m-k+1:n-m]$ are nonsingular.
\end{remark}
\begin{remark}
Let $\{\C_n\}_n$ be a family of nonsingular Toeplitz band matrices as in~\eqref{eq:41}. Let the associated polynomial $q^{\C}$ in \eqref{eq:42} be of type $(m_1,m_2,k)$ or $(m,k_1,k_2)$, where $m_1+m_2=m$ and $k_1+k_2=k$. From \cite[Lemma 2]{AmodioBrugnano1996}, there exist $\mathbf{L}_n, \mathbf{U}_n \in \R^{n \times n}$, respectively, lower and upper Toeplitz triangular matrices (see \cite[Section 3]{AmodioBrugnano1996} for the precise definition) such that
\begin{equation} \label{eq:A14}
    \C_n^{-1} = \mathbf{U}_n^{-1} \mathbf{L}_n^{-1} + \mathbf{H}_n^{(0)}
\end{equation}
with $\mathbf{H}_n^{(0)}$ satisfying the analogous of \eqref{eq:A4}. Consider now a family of perturbed matrices $\{\widehat{\C}_n\}_n$ as in the assumptions of Theorem \ref{th:45} or Remark \ref{rem:A3}. Combining \eqref{eq:A14} with \eqref{eq:A5}, we deduce that
\begin{equation*}
    \widehat{\C}_n^{-1} = \mathbf{U}_n^{-1} \mathbf{L}_n^{-1} + \mathbf{H}_n^{(0)} + \mathbf{H}_n^{(1)} + \mathbf{H}_n^{(2)} + \mathbf{H}_n^{(3)} + \mathbf{H}_n^{(4)}
\end{equation*}
with the same $\mathbf{L}_n$, $\mathbf{U}_n$ and $\H_n^{(0)}$ as in \cite{AmodioBrugnano1996}, and the four matrices $\H_n^{(j)}$, $j=1,\ldots,4$, characterized in the proof of Theorem \ref{th:45}.
\end{remark}
\begin{remark} \label{rem:A5}
Let the family of nonsingular Toeplitz band matrices $\{\C_n\}_n$ be weakly well-conditioned and $q^{\C}$ of type $(m_1,m_2,k)$ with $m_1+m_2=m$. Let $\{\widehat{\C}_n\}_n$ be a family of nonsingular perturbed matrices with perturbation as in the assumption of Theorem \ref{th:45}. Then, the matrices $\C_n^{(1)}$, defined in the proof of Theorem \ref{th:45}, are nonsingular, for $n$ sufficiently large, if and only if the corresponding matrix $\G_2^{(1)}$, see definition \eqref{eq:A10}, is nonsingular. This is due to Lemma \ref{lem:A1} and \eqref{eq:A11}. Consider the top-left perturbed block of $\C_n^{(1)}$ of size $(m+k) \times (m+2k)$,
\begin{equation*}
\begin{matrix}
        \coolleftbrace{m+k}{* \\ \\ \vspace{0.15cm} \\ * \\  
        * \\ \vspace{0.15cm} \\ *}
    \end{matrix}%
\begin{pmatrix}
\phantom{12} \tikzmark{left1} \circleasterisk & \ldots & \circleasterisk \hspace{0.15cm} & \tikzmark{left2} c_k & & &
\\ \phantom{12} \vdots & & \vdots \hspace{0.1cm} & \vdots & \hspace{-1.5cm} \ddots &
\\\phantom{12} \circleasterisk & \ldots & \circleasterisk \hspace{0.1cm} & \vdots &  \ddots &
\\\phantom{12} * & \ldots & * \hspace{0.1cm} & \vdots & & \ddots
\\ & \ddots & \vdots & \vdots \hspace{0.1cm} & & & \ddots 
\\ &  & * \tikzmark{right1} & \phantom{12345} c_{-m+1} & \ldots & \ldots & \ldots & c_k \tikzmark{right2} \phantom{1}
\end{pmatrix} \vspace{0.1cm}
\DrawBox[thick, black]{left1}{right1}{\textcolor{black}{\footnotesize$\mathbf{Y_1}$}}
\DrawBox[thick, black]{left2}{right2}{\textcolor{black}{\footnotesize$\mathbf{Y_2}$}} \vspace{0.2cm}
\end{equation*}
and denote as in figure the sub-blocks $\mathbf{Y}_1 \in \R^{(m+k) \times k}$ and $\mathbf{Y}_2 \in \R^{(m+k) \times (m+k)}$. Then, one can compute
\begin{equation} \label{eq:A15}
    \G_2^{(1)} = (\W^{-1})[m+1:m+k,1:m+k] \mathbf{Y}_2^{-1} \mathbf{Y}_1,
\end{equation}
where $\W$ is the Casorati matrix associated with the family $\{\C_n\}_n$. Readily follows that it is always possible to find components in $\mathbf{Y}_1$ such that $\G_2^{(1)}$ is nonsingular, and therefore, for $n$ sufficiently large, each $\C_n^{(1)}$ is nonsingular. The nonsingularity of $\C_n^{(2)}$, for $n$ sufficiently large, follows from the nonsingularity of each $\widehat{\C}_n$ and $\C_n$. Finally, with a similar idea as for $\C_n^{(1)}$, also the elements in the bottom-right block of $\{\C_n^{(3)}\}_n$ can be defined such that, for $n$ sufficiently large, each $\C_n^{(3)}$ is nonsingular. 
\end{remark}
\begin{remark} \label{rem:A6}
With a similar idea as in the previous remark, we also deduce a criterion to guarantee the nonsingularity of the elements of the family $\{\widehat \C_n\}_n$. Let the family of nonsingular Toeplitz band matrices $\{\C_n\}_n$ be weakly well-conditioned with $q^{\C}$ of type $(m_1,m_2,k)$ with $m_1+m_2=m$, and let $\{\widehat{\C}_n\}_n$ be a family of perturbed matrices with perturbation as in the assumption of Theorem \ref{th:45}. If the matrix $\G_2^{(1)}$ associated with the family $\{\widehat{\C}_n\}_n$ and $\G_2^{(1)}$ associated with $\{\mathbf{J}_n (\widehat{\C}_n)^\top \mathbf{J}_n\}_n$, being $\mathbf{J}_n$ as defined in \eqref{eq:A13}, are nonsingular, then for $n$ sufficiently large, each $\widehat\C_n$ is nonsingular. In our case, it is difficult to verify this criterion for the families $\{\K_n^p(\rho,\delta)\}_n$ since the Casorati matrix $\W$ involves the zero of the associated polynomial $q^{\K^p(\rho,\delta)}$, which are difficult to obtain due to dependence on the parameters $\rho$ and $\delta$.
\end{remark}
\begin{example} \label{ex:A7}
In this example we show that, to guarantee the nonsingularity of each $\widehat \C_n$, it is not enough that the family $\{\C_n\}_n$ is weakly well-conditioned and that the top-left and bottom-right blocks of size $(m+k) \times (m+k)$ of each element of $\{\widehat{\C}_n\}_n$ are full rank. Consider the family $\{\C_n\}_n$ with $c_{-3} = 1, c_{-2} = 2, c_{-1} = -6, c_0 = 2$ and $c_1 = 1$. The associated polynomial $q^{\C}(z) = 1 + 2z - 6z^2 + 2z^3 + z^4$ is of type $(1,2,1)$, since it has roots $-2 \pm \sqrt{3}$ and double root $1$. Therefore, the family $\{\C_n\}_n$ is weakly well-conditioned. Consider the family of perturbed matrices $\{\widehat{\C}_n\}_n$ with perturbation only in the top-left block defined as
\begin{equation*}
\widehat{\C}_n = 
\setcounter{MaxMatrixCols}{20}
    \begin{pmatrix} 
            -6 & 2  \\
            -8 & 1 & 1 \\
            1 & -6 & 2 & 1 \\
            1 & 2 & -6 & 2 & 1 \\
            & \ddots & \ddots & \ddots & \ddots & \ddots \\
             & & 1 & 2 & -6 & 2 & 1 \\
             & & & 1 & 2 & -6 & 2 \\
    \end{pmatrix}_{n \times n} \hspace{-0.5cm}.
\end{equation*}
According to the notation in the proof of Theorem \ref{th:45}, we can compute with \eqref{eq:A15}
\begin{equation*}
\G_2^{(1)} = \begin{pmatrix}
-1 & -\frac{3}{2} & - \frac{1}{2} \vspace{0.1cm}\\
-\frac{2}{3} & -1 & -\frac{1}{3} \vspace{0.1cm} \\
-\frac{23}{18} \sqrt{3} - \frac{13}{6} & -\frac{19}{12} \sqrt{3} - \frac{11}{4} & \frac{19}{36} \sqrt{3} + \frac{11}{12} \\
\end{pmatrix}
\end{equation*}
associated with the family $\{\widehat{\C}_n\}_n$. This matrix is not full rank. Therefore, for $n$ sufficiently large, each $\widehat{\C}_n$ is singular, even if the top-left block of size $4 \times 4$ of each $\widehat \C_n$ is full rank.
\end{example}

\section{The last assumption of Theorem \ref{th:45} for $\{\K_n^p(\rho,\delta)\}_n$} \label{app:B}

In this section, we explicitly show that the entries of the outer codiagonals, i.e., the $(p-1)^{\thh}$ and the $(-p-1)^{\thh}$ codiagonals, of the matrices $\K_n^p(\rho,\delta)$ are nonzero for all $p=2,\ldots,8$, $\rho>0$ and $\delta\le0$. For even $p$, we have the exception of a single value $\rho_p^*(\delta)>0$, which depends on $\delta$, and for which all the entries of these codiagonals are null. This case does not pose a problem since, in this critical case, the matrices $\K_n^p(\rho_p^*(\delta),\delta)$ are of type $m=p+1$ and $k=p-2$ and we have verified that no entries of the new outer codiagonals, i.e., the $(p-2)^{\thh}$ and $(-p)^{\thh}$ codiagonals, have null entries. Note that, thanks to the symmetry and persymmetry properties of each $\K_n^p(\rho,\delta)$ (see Proposition \ref{prop:32}), it is sufficient to analyze the first $p+1$ entries of the $(p-1)^{\thh}$ codiagonal, and in the critical case $\rho = \rho_p^*(\delta)$, the first $p+2$ entries of the $(p-2)^{\thh}$ codiagonal.

Let us denote by $\K_*^p(\rho,\delta)$ and $\K_{\#}^p(\rho,\delta)$ the vectors with the first $p+1$ and $p+2$ entries of the $(p-1)^{\thh}$ and the $(p-2)^{\thh}$ codiagonals of $\K_n^p(\rho,\delta)$, respectively.

The results of the next subsections have been performed in MATLAB 2024a with the Symbolic Toolbox \cite{matlabsymbolic}, and validated with Magma V2.28-11 \cite{BosmaCannonPlayoust1997}.

\subsection{The case $p=2$}
We explicitly obtain the entries
\begin{align*}
    \K_*^2(\rho,\delta) = 
    \left(\tfrac{1}{3}  + \rho(\tfrac{1}{60}+ 2\delta)\right)
    \cdot
    \begin{bmatrix}
    1 & \frac{1}{2} & \frac{1}{2}
    \end{bmatrix}.
\end{align*}
From which, for $\delta < -\frac{1}{120}$, there exists the positive value $$\rho_2^*(\delta) = -\tfrac{1}{3}\left(\tfrac{1}{60}+2\delta \right)^{-1}$$ such that all the elements of $\K_*^2(\rho_2^*(\delta),\delta)$ are zero. In this case,
\begin{align*}
    \K_{\#}^2(\rho_2^*(\delta),\delta) = \tfrac{4}{120\delta+1} 
    \cdot
    \begin{bmatrix}
    180\delta-1 & 30\delta-1 & 30\delta-1 & 30\delta-1
    \end{bmatrix},
\end{align*}
whose entries are nonzero if $\delta \notin \{\frac{1}{180}, \frac{1}{30}\}$.
\subsection{The case $p=3$}
We explicitly obtain the entries
\begin{align*}
    \K_*^3(\rho,\delta) = 
    \left(\tfrac{1}{20}  + \rho(\tfrac{1}{840} - 6\delta)\right)
    \cdot
    \begin{bmatrix}
    1 & \frac{1}{4} & \frac{1}{6} & \frac{1}{6}
    \end{bmatrix}.
\end{align*}
From which all the elements of $\K_*^3(\rho,\delta)$ are nonzero for all $\delta \le 0$ and $\rho>0$ .
\subsection{The case $p=4$}
We explicitly obtain the entries
\begin{align*}
    \K_*^4(\rho,\delta) = 
    \left(\tfrac{1}{210}  + \rho(\tfrac{1}{15120}+ 24\delta)\right)
    \cdot
    \begin{bmatrix}
    1 & \frac{1}{8} & \frac{1}{18} & \frac{1}{24} & \frac{1}{24}
    \end{bmatrix}.
\end{align*}
From which, for $\delta < -\frac{1}{362880}$, there exists the positive value $$\rho_4^*(\delta) = -\tfrac{1}{210}\left(\tfrac{1}{15120}+24\delta\right)^{-1}$$ such that all the elements of $\K_*^4(\rho_4^*(\delta),\delta)$ are zero. In this case, one can verify that all the entries of $\K_{\#}^4(\rho_4^*(\delta),\delta)$ are nonzero if $\delta \notin \{\tfrac{1}{127008},\tfrac{1}{119070}$\}.
\subsection{The case $p=5$}
We explicitly obtain the entries
\begin{align*}
    \K_*^5(\rho,\delta) = 
    \left(\tfrac{1}{3024}  + \rho(\tfrac{1}{3326400} - 120\delta)\right)
    \cdot
    \begin{bmatrix}
    1 & \frac{1}{16} & \frac{1}{54} & \frac{1}{96} & \frac{1}{120} & \frac{1}{120}
    \end{bmatrix}.
\end{align*}
From which all the elements of $\K_*^5(\rho,\delta)$ are nonzero  for all $\delta \le 0$ and $\rho>0$.
\subsection{The case $p=6$}
We explicitly obtain the entries
\begin{align*}
    \K_*^6(\rho,\delta) = 
    \left(\tfrac{1}{55440}  + \rho(\tfrac{1}{8648640}+ 720\delta)\right)
    \cdot
    \begin{bmatrix}
    1 & \frac{1}{32} & \frac{1}{162} & \frac{1}{384} & \frac{1}{600} & \frac{1}{720} & \frac{1}{720}
    \end{bmatrix}.
\end{align*}
From which, for $\delta < -\frac{1}{6227020800}$, there exists the positive value $$\rho_6^*(\delta) = -\tfrac{1}{55440}\left(\tfrac{1}{8648640}+720\delta\right)^{-1}$$ such that all the elements of $\K_*^6(\rho_6^*(\delta),\delta)$ are zero. In this case, one can verify that all the entries of $\K_{\#}^6(\rho_6^*(\delta),\delta)$ are nonzero if $\delta \notin \{\tfrac{1}{34248614400},\tfrac{1}{2073646575}\}$.
\subsection{The case $p=7$}
We explicitly obtain the entries
\begin{align*}
    \K_*^7(\rho,\delta) = 
    & \bigl(\tfrac{1}{1235520}  + \rho(\tfrac{1}{259459200} - 5040\delta)\bigr)
    \cdot \begin{bmatrix}
    1 & \frac{1}{64} & \frac{1}{486} & \frac{1}{1536} & \frac{1}{3000} & \frac{1}{4320} & \frac{1}{5040} & \frac{1}{5040}
    \end{bmatrix}.
\end{align*}
From which all the elements of $\K_*^7(\rho,\delta)$ are nonzero for all $\delta \le 0$ and $\rho>0$.
\subsection{The case $p=8$}
We explicitly obtain the entries
\begin{align*}
    \K_*^8(\rho,\delta) = 
    & \left(\tfrac{1}{32432400}  + \rho(\tfrac{1}{8821612800}+ 40320\delta)\right)
    \cdot \begin{bmatrix}
    1 & \frac{1}{128} & \frac{1}{1458} & \frac{1}{6144} & \frac{1}{15000} & \frac{1}{25920} & \frac{1}{35280} & \frac{1}{40320} &  \frac{1}{40320}
    \end{bmatrix}.
\end{align*}
From which, for $\delta < -\frac{1}{38109367296000}$, there exists the positive value $$\rho_8^*(\delta) = -\tfrac{1}{32432400}\left(\tfrac{1}{8821612800}+40320\delta\right)^{-1}$$ such that all the elements of $\K_*^8(\rho_8^*(\delta),\delta)$ are zero. In this case, one can verify that all the entries of $\K_{\#}^8(\rho_8^*(\delta),\delta)$ are nonzero if $\delta \notin \{\tfrac{1}{2667655710720006},\tfrac{1}{118555239552750}\}$.
\end{appendix}

\bibliography{Biblio_Ferrari_Fraschini}{}
\bibliographystyle{plain}

\end{document}